\renewcommand{\myH}{{\mathcal{H}}}
\newcommand{\mystar}{\diamondsuit}
\renewcommand{\m}{\fram}
\newcommand{\mytau}{{\uptau}}
\theoremstyle{theorem}
\renewcommand{\O}{\mathcal O}
\DeclareMathOperator{\Spa}{Spa}
\begin{document}
\title[Perfectoid multiplier/test ideals]{Perfectoid multiplier/test ideals in regular rings and bounds on symbolic powers}
\author{Linquan Ma and Karl Schwede}
\address{Department of Mathematics\\ Purdue University\\ West Lafayette\\ IN, 47907, USA}
\email{ma326@purdue.edu}
\address{Department of Mathematics\\ University of Utah\\ Salt Lake City\\ UT 84112, USA}
\email{schwede@math.utah.edu}

\thanks{The first named author was supported in part by NSF Grant \#1836867/1600198 and NSF CAREER Grant DMS \#1252860/1501102.}
\thanks{The second named author was supported in part by the
  NSF FRG Grant DMS \#1265261/1501115 and NSF CAREER Grant DMS \#1252860/1501102.}

\maketitle

\begin{abstract}
Using perfectoid algebras we introduce a mixed characteristic analog of the multiplier ideal, respectively test ideal, from characteristic zero, respectively $p > 0$, in the case of a regular ambient ring.  We prove several properties about this ideal such as subadditivity.
We then use these techniques to derive a uniform bound on the growth of symbolic powers of radical ideals in all excellent regular rings.  The analogous result was shown in equal characteristic by Ein--Lazarsfeld--Smith and Hochster--Huneke.
\end{abstract}

\section{Introduction}
In this paper, we prove the following result on the uniform bound on the growth of symbolic powers of ideals.  
\begin{mainthm*}[\autoref{thm.SymbolicContainmentInGeneral}]
\label{thm.Main}
Suppose that $R$ is a Noetherian regular ring with reduced formal fibers (\itshape{e.g.} $R$ is excellent). If $ Q\subseteq R$ is a prime ideal of height $h$, then for all $m > 0$ we have
\[
Q^{(mh)} \subseteq Q^m.
\]
where $Q^{(mh)}$ is the $mh$ symbolic power of $Q$.\footnote{$Q^{(mh)}=Q^{mh}R_Q\cap R$, i.e., the elements of $R$ which vanish generically to order $mh$ at $Q$.}
\end{mainthm*}

When $R$ is finite type over $\bC$, this result was proved as an application of multiplier ideals by Ein--Lazarsfeld--Smith \cite{EinLazSmithSymbolic}.  Shortly later, Hochster--Huneke \cite{HochsterHunekeComparisonOfSymbolic} used tight closure theory to prove the result when $R$ contains a field.  Also see \cite{SwansonLinearEquivalenceOfIdealTopologies} where it was first shown that there is a linear containment relation between symbolic and ordinary powers. Our contribution to the Main Theorem is the mixed characteristic case, which answers the question of Hochster--Huneke in \cite[Section 5]{HochsterHunekeComparisonOfSymbolic}.

Since regular local rings are UFDs, every height one prime $Q$ is principal and hence $Q^{(m)} = Q^m$. Thus the Main Theorem can be viewed as a strengthening and generalization of this classical fact to primes of higher codimension. Moreover, starting with the aforementioned results, the question of the growth of symbolic powers has been of central importance in commutative algebra and its applications to algebraic geometry over the past few decades, see for example \cite{HochsterHunekeFineBehaviorOfSymbolicPowers,HunekeKatzValidashtiUniformEquivalenceSymbolicAdic,BDRHKKSS,BocciHarbourneSymbolicPowers, DaoDeStefaniGrifoHunekeNunezSymbolicSurvey}.

The main ideas of our proof come from the recent solution of the direct summand conjecture and its derived variant \cite{AndreDirectsummandconjecture,BhattDirectsummandandDerivedvariant}: we introduce a mixed characteristic analog of the multiplier ideal or test ideal using perfectoid algebras, prove many properties of it, and finally (and analogously to the strategy of \cite{EinLazSmithSymbolic}, see also \cite{HaraACharacteristicPAnalogOfMultiplierIdealsAndApplications}) use those properties to deduce the Main Theorem above.


\subsection{Multiplier and test ideals}
Suppose that $R$ is an equal characteristic regular domain satisfying mild geometric assumptions\footnote{For example, of essentially finite type over a field, or complete, or $F$-finite in characteristic $p > 0$.}.  Further suppose that $\fra \subseteq R$ is an ideal and $t \in\mathbb{R}_{\geq 0}$ a \emph{formal} exponent for $\fra$. In this setting we can form the test ideal $\tau(R, \fra^t)$ in characteristic $p > 0$ or the multiplier ideal $\mJ(R, \fra^t)$ in characteristic $0$.

This is an ideal of $R$ which measures the singularities of $V(\fra) \subseteq \Spec R$, scaled by $t$.  {\it Roughly speaking}, for relevant values of $t$,  the multiplier or test ideal of $(R, \fra^t)$ is smaller/deeper than that of $(R, \frb^t)$ if $V(\fra)$ has the same dimension as $V(\frb)$ and is more singular than $V(\frb)$. Crucially for the applications to symbolic powers, the multiplier or test ideal satisfies the following list of properties, see for example \cite{HochsterHunekeTC1,HaraYoshidaGeneralizationOfTightClosure,LazarsfeldPositivity2}.  We state them for the multiplier ideal $\mJ(R, \fra^t)$ but they also hold for the test ideal $\tau(R, \fra^t)$.
\begin{itemize}
\item[(A)] {\bf Basic containments:}   If $\fra \subseteq \frb$ is a containment of ideals, then
\label{itm.A}
\[
\mJ(R, \fra^t) \subseteq \mJ(R, \frb^t)
\]
and if $t < t'$ then
\[
\mJ(R, \fra^{t'}) \subseteq \mJ(R, \fra^{t}).
\]
\item[(B)] {\bf Unambiguity of exponent:}  For any positive integer $n$,
\[
\mJ(R, \fra^{tn}) = \mJ(R, (\fra^n)^{t}).
\]
\item[(C)] {\bf Not too small:} $\fra \subseteq \mJ(R, \fra)$.
\item[(D)] {\bf Not too big:} If $\fra$ is prime of height $h$, $\mJ(R, (\fra^{(lh)})^{{1\over l}}) \subseteq \fra$.
\item[(E)] {\bf Subadditivity:} If $\frb$ is another ideal and if $s \geq 0$ is another real number, then
\label{itm.E}
\[
\mJ(\fra^s \frb^t) \subseteq \mJ(\fra^s) \cdot \mJ(\frb^t).
\]
In particular we have
\[
\mJ(\fra^{tn}) \subseteq \mJ(\fra^t)^n.
\]
\end{itemize}

Combining these results, the application to the growth of symbolic powers follows from a clever asymptotic construction of multiplier ideals \cite{EinLazSmithSymbolic}, see also \cite{HaraACharacteristicPAnalogOfMultiplierIdealsAndApplications}.  We aim to do the same thing in mixed characteristic.

Very roughly, the multiplier ideal and test ideal of a regular local ring $(R, \fram)$ of dimension $d$ can be defined in the following way:
\[
\begin{array}{rl}
(\mJ/\tau)(R, \fra^t) = & \Ann_R \{\eta \in H^d_{\fram}(R) \;|\; \text{ $\eta$'s image in $H^d_{\fram}(B)$ is ``annihilated'' by $\fra^t$} \}.
\end{array}
\]
Here $B$ and ``annihilated'' are made precise as follows:
\begin{description}
\item[$B$ in characteristic zero]  $B = \myR \pi_* \O_Y$ where $\pi : Y \to \Spec R$ is a log resolution.\footnote{$\pi : Y \to \Spec R$ is proper birational, $Y$ is regular and $\fra \cdot \O_Y$ defines a SNC divisor.}  We define the $\eta$ which are ``annihilated'' as follows: write $\fra \cdot \O_Y = \O_Y(-G)$ and consider $\eta$ whose image in $\bH^d_{\fram}(\myR \pi_* \O_Y(\lfloor tG \rfloor))$ is zero.
\item[$B$ in characteristic $p > 0$]  $B = R^{1/p^{\infty}}$, the perfection of $R$.  We define the $\eta$ which are ``annihilated'' as follows:  those $\eta$ such that $c^{1/p^e} (\fra^{\lceil tp^e \rceil})^{1/p^e} \eta = 0 \in H^d_{\fram}(R^{1/p^{\infty}})$ for some $c \neq 0$ and all $e > 0$.
\end{description}

The real power of both the multiplier and test ideal (and their related circles of ideas) are the associated vanishing theorems that accompany them.  In characteristic zero this is Kawamata--Viehweg vanishing \cite{KawamataVanishing,ViehwegVanishingTheorems}, see for instance \cite{EsnaultViehwegLecturesOnVanishing}.  In characteristic $p>0$, Serre vanishing combined with the Frobenius morphism plays an analogous role.

Our goal in this article is to develop a theory of the test ideal in mixed characteristic regular local rings and prove it satisfies properties (A) through (E) above.

\subsection{Perfectoid test ideals} Now let us assume that $A$ is a complete regular local ring of mixed characteristic $(0,p)$. In this situation, for every fixed element $g\in A$, Andr\'{e} constructed an $A$-algebra $A_{\infty}$ that is an integral perfectoid algebra almost faithfully flat over $A$ mod powers of $p$ and such that $g\in A$ has a compatible system of $p$-power roots in $A_\infty$ (in this case, $g^{1/p^e}$ will be declared \emph{compatible}). This ingenious construction is crucial in the solution of the mixed characteristic case of the direct summand conjecture \cite{AndreDirectsummandconjecture,BhattDirectsummandandDerivedvariant} and the existence of big Cohen-Macaulay algebras \cite{AndreDirectsummandconjecture,HeitmannMaBigCohenMacaulayAlgebraVanishingofTor,ShimomotoIntegralperfectoidbigCMviaAndre,AndreWeaklyFunctorialBigCM}.

In this article, we iterate Andr\'{e}'s construction to obtain a huge extension $A\to A_{\infty}$ that is almost faithfully flat over $A$ and such that all elements of $A$ have a compatible systems of $p$-power roots in $A_\infty$. We will use this $A_\infty$ as the $B$ to replace $R^{1/p^{\infty}}$ in the definition of the test ideal in characteristic $p > 0$ (or as a replacement for the $\myR \pi_* \O_Y$ in the definition of the multiplier ideal in characteristic $0$).  Inspired by this, let $\fra\subseteq A$ be an ideal, we define the {\it perfectoid test ideal} of $(A, \fra^t)$ to be
\[
\mytau(A, \fra^t) = \Ann_A\{ \eta \in H^d_{\fram}(A) \;|\; \eta \text{ is ``almost'' annihilated by }\fra^t\}
\]
There are different ways to interpret this $\fra^t$ action in our setting, and at least in some proofs, it is convenient to define our analog of the test ideal with respect to a sequence of elements $\{f_1, \ldots, f_n\}$ that generate $\fra$.  See \autoref{sec.PerfMultiplierTestIdeals} for more details of these definitions.

We then show that the above perfectoid test ideals satisfy the analogs of properties (A) through (E) above in \autoref{prop.EasyContainments},  \autoref{proposition--test ideals of powers}, \autoref{proposition--test ideals of powers-GensVersion}, \autoref{proposition--test ideal contains the original ideal}, \autoref{lemma--asymptotic test ideal is contained in Q}, \autoref{theorem--subadditivity}.
Putting these together, and defining asymptotic perfectoid test ideals similar to how asymptotic multiplier ideals were introduced in \cite{EinLazSmithSymbolic}, we obtain our Main Theorem.


Beyond this, it is also natural to compare our perfectoid test ideals with multiplier ideals in characteristic 0.  We obtain the following result, which is essentially a corollary of our proof of property (D) in mixed characteristic.

\begin{theorem*}[\autoref{prop.ComparisonWithMultiplier}, \autoref{thm.TauInMultiplier}]
Suppose that $(A, \fra^t)$ is a pair, where $A$ is a complete regular local ring of mixed characteristic $(0,p)$.  First suppose that $\pi : Y \to X = \Spec A$ is a proper birational map with $Y$ normal and such that $\fra \cdot \O_Y = \O_Y(-G)$.  Then
\[
\mytau(A, \fra^t) \subseteq \Gamma(Y, \O_Y(\lceil K_{Y/X}- tG \rceil))\footnote{See \autoref{def.RelCanonical} for a definition of $K_{Y/X}$ in this context.}
\]
where the object on the right would be the multiplier ideal if $Y$ is a log resolution.

Furthermore, since $A[1/p]$ has characteristic zero, we can form the multiplier ideal $\mJ(A[1/p], (\fra \cdot A[1/p])^t)$.  We have:
\[
\mytau(A, \fra^t) \cdot A[1/p] \subseteq  \mJ(A[1/p], (\fra \cdot A[1/p])^{t}).
\]
\end{theorem*}
We also expect that the characteristic zero statement is an equality, but we do not know how to show this.
\vskip 12pt
\subsection*{Acknowledgements:}  The authors would like to thank Bhargav Bhatt, Raymond Heitmann, Kiran Kedlaya, Tiankai Liu, Stefan Patrikis, and Peter Scholze for valuable conversations.  We thank Rankeya Datta for comments on a previous draft.  Finally, we thank all the referees for numerous comments on previous versions -- their feedback has substantially improved the paper.

\section{Perfectoid algebras and Andr\'{e}'s construction}
\label{sec.PerfectoidAlgebrasAndAndre}

Throughout this paper we will use the language of (integral) perfectoid algebras and almost mathematics as in \cite{ScholzePerfectoidspaces}, \cite{GabberRameroAlmostringtheory}, \cite{BhattDirectsummandandDerivedvariant}, \cite{AndreWeaklyFunctorialBigCM}. We will work over a {\it fixed} perfectoid field $K=\widehat{\mathbb{Q}_p(p^{1/p^\infty})}$ and its ring of integers $K^\circ=\widehat{\mathbb{Z}_p[p^{1/p^\infty}]}$. We collect some definitions from \cite[Section 5]{ScholzePerfectoidspaces}, \cite[Section 1.4]{BhattDirectsummandandDerivedvariant}, \cite[Section 2.2]{AndreWeaklyFunctorialBigCM}.  Additionally, we use the notation $\myH^j(\bullet)$ to denote the $j$th cohomology of a complex.

A {\it perfectoid $K$-algebra} is a Banach $K$-algebra $R$ such that the subring of power-bounded elements $R^\circ\subseteq R$ is bounded and the Frobenius is surjective on $R^\circ/p$. A $K^\circ$-algebra $S$ is called {\it integral perfectoid} if it is $p$-adically complete, $p$-torsion free, and the Frobenius induces an isomorphism $S/p^{1/p}\xrightarrow{\sim} S/p$. If $R$ is a perfectoid $K$-algebra, then the subring of power-bounded elements $R^\circ$ is integral perfectoid, and if $S$ is integral perfectoid, then $S[1/p]$ perfectoid, see \cite[Theorem 5.2]{ScholzePerfectoidspaces}. Unless otherwise stated, almost mathematics in this paper will always be measured with respect to the ideal $(p^{1/p^\infty})\subseteq K^\circ$.

\begin{remark}
In \cite{BhattDirectsummandandDerivedvariant}, there is an extra condition in the definition of integral perfectoid algebra: one requires that $S=S_*:=\{x\in S[1/p] \hspace{0.5em}  | \hspace{0.5em}  p^{1/p^n} x\in S \text{ for all } n \}$. If we impose this extra condition then \cite[Theorem 5.2]{ScholzePerfectoidspaces} says the two categories are equivalent. In particular, $S_*=S[1/p]^\circ$ is integrally closed in $S[1/p]=S_*[1/p]$. Since passing from $S$ to $S_*$ is harmless for all our purposes (they are almost isomorphic to each other), we will assume that $S$ is integrally closed in $S[1/p]$ for all integral perfectoid algebras in the remainder of the article.
\end{remark}

We recall the following definitions.
\begin{itemize}
\item A map $A\to S$ such that $S$ is a $K^\circ$-algebra is {\it almost flat} if $\Tor_i^A(M, S)$ is almost zero (i.e., annihilated by $(p^{1/p^\infty})$) for all $A$-modules $M$ and all $i>0$. By taking syzygies and degree shifting, it suffices that $\Tor_1^A(M, S)$ is almost zero for all $A$-modules $M$.
\item A map $R\to S$ of $K^\circ$-algebras is {\it almost faithfully flat} if it almost flat, and such that if $M\otimes_RS$ is almost zero then $M$ is almost zero.
\end{itemize}

The goal of this section is to explain the following:

\begin{theorem}
\label{thm.ExistenceofAinfty}
Let $(A,\m)$ be a complete regular local ring of mixed characteristic $(0,p)$ and dimension $d$. Then there exists a map $A\to A_\infty$ to an integral perfectoid $K^\circ$-algebra $A_\infty$ such that:
\begin{enumerate}
  \item All elements of $A$ have a compatible system of $p$-power roots in $A_\infty$.
  \item $A\to A_\infty$ is almost flat. In particular, $A\hookrightarrow A_\infty$ is injective, and nonzero elements of $A$ are nonzerodivisors in $A_\infty$.
  \item If $M\otimes_AA_\infty$ is almost zero, then $M=0$.\footnote{We caution the reader that the term ``almost faithfully flat" is only defined when we consider maps of $K^\circ$-algebras while our base ring $A$ here is not defined over $K^\circ$ (e.g., saying $M$ is almost zero does not usually make sense here since $M$ is just an $A$-module). This is the reason we treat the properties $(b)$ and $(c)$ separately in the statement.}
\end{enumerate}
\end{theorem}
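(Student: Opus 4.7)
The plan is to build $A_\infty$ by transfinitely iterating André's construction, starting from a standard perfectoid extension of $A$. First, by the Cohen structure theorem, choose a regular system of parameters $p = x_0, x_1, \ldots, x_{d-1}$ of $A$. Let $A_0$ denote the $p$-adic completion of $A\bigl[p^{1/p^{\infty}}, x_1^{1/p^\infty}, \ldots, x_{d-1}^{1/p^\infty}\bigr]$. This is a well-known integral perfectoid $K^\circ$-algebra (essentially the mixed characteristic analog of Faltings' almost purity construction), and one verifies directly that $A\to A_0$ is almost faithfully flat: modulo any fixed power of $p$, it is even faithfully flat in the usual sense, because $A/p^n \to A_0/p^n$ is obtained by adjoining $p^{1/p^e}$-th roots of a regular sequence extending the maximal regular sequence of $A/p^n$.

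Next, well-order the (at most $2^{|A|}$-many) elements of $A$ as $\{g_\alpha\}_{\alpha<\kappa}$, and build a transfinite tower $\{A_\alpha\}_{\alpha\leq\kappa}$ of integral perfectoid $K^\circ$-algebras as follows. At a successor ordinal, let $A_{\alpha+1}$ be the output of André's construction applied to the pair $(A_\alpha, g_\alpha)$; by the main theorem of \cite{AndreDirectsummandconjecture,BhattDirectsummandandDerivedvariant} this is an integral perfectoid $A_\alpha$-algebra, almost faithfully flat over $A_\alpha$ modulo any power of $p$, in which $g_\alpha$ acquires a compatible system of $p$-power roots. At a limit ordinal $\beta$, set $A_\beta$ to be the $p$-adic completion of $\colim_{\alpha<\beta} A_\alpha$; this is again integral perfectoid since the perfectoid property is preserved by filtered colimits and by $p$-adic completion. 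Put $A_\infty := A_\kappa$. Property (a) then holds by construction.

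For properties (b) and (c), the essential observation is that the composition $A\to A_\alpha$ is almost faithfully flat modulo any power of $p$ \emph{at every stage}. At successors this follows from André plus composition of almost faithfully flat maps; at limits, since each $A_\alpha/p^n \to A_{\alpha+1}/p^n$ is almost faithfully flat, the filtered colimit and $p$-adic completion (which commutes with tensor products over Noetherian bases for bounded $p$-power torsion) preserve this. Hence, for any $A$-module $M$, one has $\Tor_1^A(M, A_\infty/p^n)$ almost zero for every $n$, and passing to $p$-adic limits (or reducing to the case that $M$ is $p$-torsion, then finitely generated, using that $A$ is Noetherian) gives the almost vanishing of $\Tor_1^A(M, A_\infty)$, which is (b). For (c), if $M\otimes_A A_\infty$ is almost zero then so is its reduction $(M/p^nM)\otimes_{A/p^n}(A_\infty/p^n)$; almost faithful flatness mod $p^n$ forces $M/p^nM$ to be almost zero in the $K^\circ$-sense, hence $p^{1/p^e}M\subseteq p^n M$ for all $e,n$, so $M\subseteq \bigcap_n p^n M = 0$ by Krull's intersection applied after reducing to the finitely generated case.

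The main obstacle is the verification that almost faithful flatness is genuinely preserved at limit stages, because $p$-adic completion is not generally exact on arbitrary colimits. The saving point is that we only need this property modulo each $p^n$ individually; there, completion is essentially a no-op on the relevant quotients and the colimit of almost faithfully flat maps of $A/p^n$-modules is still almost faithfully flat. The second delicate point is keeping track of almost mathematics when André's construction is iterated, since each stage slightly enlarges the almost setup; however, since we fix the base ideal $(p^{1/p^\infty})\subseteq K^\circ$ throughout and every $A_\alpha$ is a $K^\circ$-algebra once we have passed $A_0$, this poses no real difficulty.
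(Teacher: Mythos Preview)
Your overall strategy---start from a base integral perfectoid $K^\circ$-algebra over $A$, iterate Andr\'e's construction to adjoin $p$-power roots, and take a $p$-completed colimit---is exactly the paper's approach. The paper indexes the colimit by finite subsets of $A$ in a single step rather than by a transfinite tower, but this is inessential; the verification of (b) proceeds identically (almost flatness mod $p^k$ is preserved under filtered colimits, and then \autoref{lem.AlmostFlatComplete} lifts this to $A_\infty$ itself).

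There are, however, two genuine gaps.

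First, your construction of $A_0$ presupposes that $p$ is part of a regular system of parameters of $A$, i.e.\ that $A$ is unramified. When $p\in\m^2$ no such choice exists. This is precisely why the paper instead invokes \autoref{lem.BhattAinfty0} (Bhatt's \cite[Proposition~5.2]{BhattDirectsummandandDerivedvariant}), which handles the general case by realizing $A$ as a finite extension of an unramified power series ring and appealing to almost purity.

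Second, and more seriously, your argument for (c) falls into exactly the trap the footnote in the theorem statement warns about. You assert that ``almost faithful flatness mod $p^n$'' forces $M/p^nM$ to be ``almost zero in the $K^\circ$-sense'' and hence $p^{1/p^e}M\subseteq p^nM$. But $M$ is only an $A$-module: it carries no $K^\circ$-action, so neither ``$M/p^nM$ is almost zero'' nor ``$p^{1/p^e}M$'' is meaningful. Likewise, $A/p^n$ is not a $K^\circ$-algebra, so the phrase ``$A/p^n\to A_\infty/p^n$ is almost faithfully flat'' is not well-defined in the sense used here. The paper avoids this by first using almost flatness and Noetherianity to reduce to showing that $A_\infty/\m A_\infty$ is not almost zero, and then rewriting this as $(R/\m R)\otimes_{R/p}(A_\infty/p)$, where $R/p\to A_\infty/p$ \emph{is} a map of $K^\circ$-algebras (hence almost faithfully flat in the legitimate sense) and $R/\m R$ is not almost zero by the construction of $R$.
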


In order to prove \autoref{thm.ExistenceofAinfty}, we need the following fact about almost flat maps which we believe is known to experts (this is the ``almost" analog of \cite[Remark 4.31]{BhattMorrowScholzeIntegralPadicHodge}). We include an elementary proof since we cannot find a good reference.

\begin{lemma}
\label{lem.AlmostFlatComplete}
Let $A\to S$ be a map of $p$-adically complete and $p$-torsion free rings such that $A$ is Noetherian and $S$ is a $K^\circ$-algebra. If $A/p^k\to S/p^k$ is almost flat for all $k>0$, then $A\to S$ is almost flat.
\end{lemma}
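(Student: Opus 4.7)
The plan is to use $p$-adic completeness of $S$ and Noetherianity of $A$ to reduce the statement to the hypothesis modulo powers of $p$. Since $\Tor$ commutes with filtered colimits and every $A$-module is the filtered colimit of its finitely generated submodules, and since taking syzygies reduces higher $\Tor$'s to $\Tor_1$ (as already observed in the paper), it will suffice to prove that $\Tor_1^A(M,S)$ is almost zero for each finitely generated $A$-module $M$. Writing $N := M[p^\infty]$ for the $p$-power torsion submodule (which by Noetherianity of $A$ is annihilated by some $p^{k_0}$) and $M' := M/N$ (which is $p$-torsion free), the long exact sequence of $\Tor$'s attached to $0 \to N \to M \to M' \to 0$ reduces the problem to the almost vanishing of $\Tor_1^A(N,S)$ and $\Tor_1^A(M',S)$ separately.

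The torsion piece is straightforward: regarding $N$ as an $A/p^{k_0}$-module, derived base change yields $N \otimes_A^{\myL} S \simeq N \otimes_{A/p^{k_0}}^{\myL} (A/p^{k_0} \otimes_A^{\myL} S)$, and since $S$ is $p$-torsion free the inner complex $[S \xrightarrow{p^{k_0}} S]$ is quasi-isomorphic to $S/p^{k_0}$. Thus $\Tor_i^A(N,S) \cong \Tor_i^{A/p^{k_0}}(N, S/p^{k_0})$, which is almost zero for $i \geq 1$ by the assumption that $A/p^{k_0} \to S/p^{k_0}$ is almost flat.

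For the $p$-torsion free piece $M'$, set $T := \Tor_1^A(M',S)$. The short exact sequence $0 \to S \xrightarrow{p} S \to S/p \to 0$ yields
\[
0 \to T/pT \to \Tor_1^A(M', S/p) \to (M' \otimes_A S)[p] \to 0,
\]
and the base change spectral sequence
\[
E_2^{p,q} = \Tor_p^{A/p}\!\bigl(\Tor_q^A(M',A/p),\, S/p\bigr) \;\Longrightarrow\; \Tor_{p+q}^A(M', S/p),
\]
combined with $\Tor_q^A(M', A/p) = 0$ for $q \geq 1$ ($p$-torsion freeness of $M'$) and almost flatness of $A/p \to S/p$, shows $\Tor_1^A(M', S/p) \simeq^a 0$, whence $T/pT$ is almost zero. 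Choosing a resolution of $M'$ by finitely generated free $A$-modules expresses $T$ as a kernel-modulo-image cohomology of a complex of finite free (hence $p$-adically, and so derived $p$-, complete) $S$-modules; since derived $p$-complete $A$-modules form an abelian subcategory closed under kernels and cokernels, $T$, and therefore $pT$ as the image of $T \xrightarrow{p} T$, is itself derived $p$-complete.

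Almost vanishing of $T/pT$ translates into $\epsilon T \subseteq pT$ for every $\epsilon = p^{1/p^n}$; iterating gives $pT = \epsilon^{p^n} T \subseteq p^{p^n} T \subseteq p^2 T$, so $p\cdot (pT) = pT$. Derived Nakayama applied to the derived $p$-complete module $pT$ then forces $pT = 0$, and consequently $\epsilon T \subseteq pT = 0$ for every $\epsilon$, proving that $T$ is almost zero. The main obstacle is precisely this last step: almost vanishing of $T/pT$ alone does not force almost vanishing of $T$ without an additional input, and derived $p$-completeness of $T$ (rather than the stronger classical $p$-adic completeness, which need not survive passage to subquotients of complete modules) is the crucial property that activates the Nakayama argument in the almost category.
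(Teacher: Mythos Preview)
Your proof is correct. The reduction to finitely generated $M$, the d\'evissage into $p$-power torsion and $p$-torsion free pieces, and the treatment of the torsion piece by base change all coincide with the paper's argument. The genuine divergence is in the $p$-torsion free case: the paper takes a finite free resolution $F_\bullet$ of $M'$, identifies $F_\bullet \otimes_A S$ with $\varprojlim_m (F_\bullet \otimes_A S/p^m)$, and applies the Milnor $\varprojlim^1$ exact sequence to squeeze $\Tor_1^A(M',S)$ between two almost-zero terms (a $\varprojlim^1$ of almost-zero $\Tor_2^{A/p^m}$ groups and a $\varprojlim$ of almost-zero $\Tor_1^{A/p^m}$ groups). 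You instead show $T/pT$ is almost zero by a base-change spectral sequence, observe that $T$ and hence $pT$ are derived $p$-complete, bootstrap $\epsilon T \subseteq pT$ to $pT = p^2T$, and invoke derived Nakayama to kill $pT$. The paper's route is more elementary and self-contained (only the Milnor sequence from standard homological algebra is needed), while yours is more conceptual and closer to how such arguments are now typically phrased in the literature on derived completion; it also isolates cleanly the single obstruction ($T/pT$ almost zero) and the single tool (derived Nakayama) that dispatches it.
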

\begin{proof}
We want show $\Tor_1^{A}(M, S)$ is almost zero for all $A$-modules $M$. We can assume that $M$ is finitely generated by taking direct limit. By considering $0\to \Gamma_{(p)}M\to M\to \overline{M}\to 0$, we only need to handle the case when $M$ is annihilated by $p^k$ for some $k$ and the case when $M$ is $p$-torsion free.

\subsubsection*{Case 1:}
If $M$ is annihilated by $p^k$, then we have
$$M\otimes^{\mathbf{L}}_AS\cong M\otimes^{\mathbf{L}}_{A/p^k}(A/p^k\otimes^{\mathbf{L}}_{A}S)\cong M\otimes^{\mathbf{L}}_{A/p^k}S/p^k$$
and hence $\Tor_1^{A}(M, S)\cong \Tor_1^{A/p^k}(M, S/p^k)$ is almost zero.


\subsubsection*{Case 2:}
Now we assume that $M$ is $p$-torsion free. Let $F_\bullet\to M\to 0$ be a free resolution of $M$ with each term of $F_\bullet$ a finite free $A$-modules (since $A$ is Noetherian and $M$ is finitely generated).  We set $F_\bullet^{(m)}=F_\bullet\otimes A/p^m\cong M/p^m$ since $M$ is $p$-torsion free. Since $S$ is $p$-adically complete, we have $$\varprojlim_m (F_\bullet^{(m)}\otimes_A S)\cong \varprojlim_m(F_\bullet\otimes_A(S/p^m))\cong F_\bullet\otimes_AS.$$
Since $\{F_\bullet^{(m)}\otimes_A S\}_m$ forms a tower of chain complexes with surjective transition maps, we have a Milnor exact sequence \cite[Theorem 3.5.8]{WeibelHomological}:
\[
0\to {\varprojlim_m}^1 \myH^{-2}(F_\bullet^{(m)}\otimes_A S) \to \myH^{-1}(\varprojlim_m (F_\bullet^{(m)}\otimes_A S)) \to \varprojlim_m \myH^{-1}(F_\bullet^{(m)}\otimes_A S)\to 0.
\]
Since $S/p^m$ is almost flat over $A/p^m$, we know that
$$\myH^{-i}(F_\bullet^{(m)}\otimes_AS) \cong \myH^{-i}(F_\bullet^{(m)}\otimes_{A/p^m} (S/p^m))\cong \Tor_i^{A/p^m}(M/p^m, S/p^m)$$ is almost zero for every $i>0$. This together with the above discussion shows that $\Tor_1^A(M, S)=\myH^{-1}(F_\bullet\otimes_AS)=\myH^{-1}(\varprojlim_m (F_\bullet^{(m)}\otimes_A S))$ is almost zero.
\end{proof}

To prove \autoref{thm.ExistenceofAinfty}, we also need an ingenious construction of certain integral perfectoid algebras due to Andr\'{e} \cite[Section 2.5]{AndreDirectsummandconjecture}, see also \cite[Secton 2]{BhattDirectsummandandDerivedvariant}. Below we recall Andr\'{e}'s construction.

\subsection{Andr\'{e}'s technique of adjoining $p$-power roots}\label{subsec.Andre's construction}
Let $R$ be an integral perfectoid $K^\circ$-algebra and $g_1,\dots,g_n$ be a finite set of elements of $R$. We want to construct an almost faithfully flat extension $R\to R_{g_1,\dots,g_n}$ of integral perfectoid $K^\circ$-algebras such that each $g_i$ admits a compatible system of $p$-power roots in $R_{g_1,\dots,g_n}$.

We consider the integral perfectoid algebra $R_n=R\langle T_{g_1}^{1/p^\infty},\dots,T_{g_n}^{1/p^\infty}\rangle$ where the $T_{g_i}$ are indeterminates, and we let $Y=\Spa (R_n[\frac{1}{p}], R_n)$ be the associated perfectoid space. We set $R_{g_1,\dots,g_n}$ to be the integral perfectoid ring of functions on the Zariski closed subset of $Y$ defined by the ideal $(T_{g_1}-g_1,\dots,T_{g_n}-g_n)$. Explicitly, we have
\[
R_{g_1,\dots,g_n}=\widehat{\varinjlim}_l S_l, \text{\hspace{1em} where \hspace{1em}} S_l:=\O_Y^+(U(\frac{T_{g_1}-g_1,\dots,T_{g_n}-g_n}{p^l})).
\]
Here $U(\frac{T_{g_1}-g_1,\dots,T_{g_n}-g_n}{p^l})$ denotes the rational subset $\big\{y\in Y \;\big|\; |T_{g_i}(y)-g_i(y)|\leq |p^l|, \forall i\big\}$ and the completion is $p$-adic. Each $S_l$ is integral perfectoid. Using the explicit description of $R_{g_1,\dots,g_n}$ (which does not depend on the order of $g_1,\dots,g_n$) we have a map $R_{g_1,\dots,g_n}\to R_{g_1,\dots,g_n, h_1,\dots,h_m}$. Therefore $\{R_{g_1,\dots,g_n}\}$, where $g_1,\dots,g_n$ runs through all the finite sets of elements of $A$, naturally form a directed system of integral perfectoid $K^\circ$-algebras.

Note that in $R_{g_1,\dots,g_n}$, we have $T_{g_i}=g_i$ for each $i$ since $T_{g_i}-g_i$ is divisible by $p^l$ for all $l$ in $R_{g_1,\dots,g_n}$. Thus, each $g_i$ has a compatible system of $p$-power roots $g_i^{1/p^k}=T_{g_i}^{1/p^k}$ in $R_{g_1,\dots,g_n}$.  In the case that we only have one element $g=g_1$, the next lemma is \cite[Theorem 2.3]{BhattDirectsummandandDerivedvariant} (which was originally proved by Andr\'{e} in \cite[Section 2.5]{AndreDirectsummandconjecture} under a slightly different setup). A similar argument works for any finite sets of elements $g_1,\dots,g_n$ and we give details below.
\begin{lemma}[Andr\'{e}]
\label{lem.Andre}
For each $l>0$, the map $R/p^h \to S_l/p^h$ is almost faithfully flat for all $h>0$. Consequently, $R/p^h\to R_{g_1,\dots,g_n}/p^h$ is almost faithfully flat.
\end{lemma}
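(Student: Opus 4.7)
\emph{Proof proposal.} The plan is to mimic Andr\'{e}'s argument for a single element \cite[Theorem 2.3]{BhattDirectsummandandDerivedvariant}, carrying it out for all $n$ elements simultaneously; the consequence for $R_{g_1,\dots,g_n}/p^h$ will then follow formally.

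First I would give an explicit description of $S_l$ up to almost isomorphism. With $R_n = R\langle T_{g_1}^{1/p^\infty},\dots,T_{g_n}^{1/p^\infty}\rangle$ as above, consider the integral perfectoid $R_n$-algebra
\[
T \;:=\; \Bigl(R_n\langle Z_1^{1/p^\infty},\dots,Z_n^{1/p^\infty}\rangle\big/(p^l Z_i - (T_{g_i}-g_i))_{i=1}^n\Bigr)^{\wedge}_p.
\]
By the universal property of the rational localization $U(\tfrac{T_{g_1}-g_1,\dots,T_{g_n}-g_n}{p^l})$, there is a natural $R_n$-algebra map $T \to S_l$ sending each $Z_i$ to $(T_{g_i}-g_i)/p^l$; this map is an isomorphism after inverting $p$, and since both sides are integral perfectoid inside the same Tate algebra it is an almost isomorphism.

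Next I would verify almost flatness of $R/p^h \to S_l/p^h$ via a Koszul computation in this explicit presentation. Let $A := R\langle T_i^{1/p^\infty}, Z_i^{1/p^\infty}\rangle_{i=1}^n$; the reduction $A/p^h$ is faithfully flat over $R/p^h$ since its mod-$p$ reduction is a perfection of a polynomial ring. The sequence $f_i := p^l Z_i - (T_i - g_i)$ is regular on $A/p^h$: modulo $f_1$ the variable $T_1$ is eliminated (it appears with unit coefficient $-1$), leaving an object of the same form in one fewer pair of variables; iterate on $i$. Hence the Koszul complex $\mathrm{Kos}(A/p^h;\, f_1,\dots,f_n)$ is a finite free resolution of (an almost representative of) $T/p^h \simeq S_l/p^h$, and the same Koszul complex base-changed along any $R/p^h \to M$ remains acyclic in positive degrees by the same regularity. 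This yields $\Tor_i^{R/p^h}(M, S_l/p^h) = 0$ almost for all $i>0$. For almost faithfulness, the substitution $Z_i \mapsto 0,\ T_i \mapsto g_i$ defines an $R$-algebra map $T \to R$ which after reduction mod $p^h$ gives an almost retraction of $R/p^h \to S_l/p^h$; in particular $M \otimes_{R/p^h} S_l/p^h$ almost zero forces $M$ almost zero. The consequence for $R_{g_1,\dots,g_n}/p^h$ then follows from $R_{g_1,\dots,g_n}/p^h \cong \varinjlim_l S_l/p^h$ (reduction mod $p^h$ absorbs the $p$-adic completion) together with the preservation of almost faithful flatness under filtered colimits.

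The main obstacle I anticipate is the first step: rigorously identifying $S_l$ with the naive quotient $T$ up to almost isomorphism. In perfectoid geometry the integral ring of functions on a rational subset is defined via sup-norms and matches an explicit quotient presentation only up to $p^{1/p^\infty}$-torsion, so one must carefully combine the universal property of rational localization with the fact that both sides are integral perfectoid inside a common Tate algebra. Once this presentation is in place, the Koszul argument and the retraction are routine, and the passage from a single element to $n$ elements is essentially just bookkeeping.
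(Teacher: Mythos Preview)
Your approach is genuinely different from the paper's and is largely on the right track, but there are two concrete gaps beyond the one you flag.

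First, the claim that ``modulo $f_1$ the variable $T_1$ is eliminated, leaving an object of the same form in one fewer pair of variables'' is false: $A$ contains all the fractional powers $T_1^{1/p^e}$, and the single relation $f_1$ only determines $T_1$ itself, not $T_1^{1/p},T_1^{1/p^2},\ldots$. The regularity you want still holds, but for a different reason: at each finite level $e$ the element $f_i$ is monic (up to sign) of degree $p^e$ in $T_i^{1/p^e}$ over a polynomial ring in the remaining variables, so the $f_i$ form a regular sequence on $R/p^h[T_j^{1/p^e}, Z_j^{1/p^e}]$ and the quotient is free over $R/p^h$; then pass to the direct limit over $e$. This freeness also gives faithfulness for free, which is good because your retraction argument is broken: the substitution $Z_i\mapsto 0,\ T_i\mapsto g_i$ does not define an $R$-algebra map $T\to R$, since you must also send $T_i^{1/p^e}$ somewhere compatible, and $g_i$ need not have $p^e$-th roots in $R$.

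The paper's proof handles your acknowledged obstacle by a different route: rather than identify $S_l$ with a naive quotient on the untilted side, it first uses Scholze's approximation lemma to replace each $T_{g_i}-g_i$ by $f_i^\sharp$ with $f_i\in R_n^\flat$ (so that compatible $p$-power roots \emph{do} exist), passes to the tilt, and invokes \cite[Lemma~6.4]{ScholzePerfectoidspaces} to get an explicit almost-presentation of $S_l^\flat$ as the completion of a perfection $B$ of an explicit ring $C$. The computation is then a characteristic-$p$ one: $C/(p^\flat)^{1/p}$ is visibly free over $R^\flat/(p^\flat)^{1/p}$, and a bootstrap lemma (their Claim about lifting almost faithful flatness from $S/t$ to $S/t^h$ when $t$ is a nonzerodivisor) carries this up to all powers of $p$. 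In short, the step you single out as the main obstacle is exactly where the paper's tilting argument does the real work; your direct approach would need either a separate argument that your $T$ is integral perfectoid with $T^a\cong S_l^a$, or the same tilting input, to close the gap.
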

\begin{proof}
We first claim that it is enough to show $R/p^{1/p}\to S_l/p^{1/p}$ is almost faithfully flat. This follows from the more general fact:
\begin{claim}
\label{clm.faithfullyflatnzd}
Let $R\to S$ be a map of $K^\circ$-algebras (resp. $K^{\flat\circ}$-algebras). Suppose $R$, $S$ are both $t$-torsion free for some $0\neq t\in K^\circ$ (resp. $K^{\flat\circ}$). If $S/t$ is almost faithfully flat over $R/t$, then $S/t^h$ is almost faithfully flat over $R/t^h$ for all $h\geq 1$.
\end{claim}
\begin{proof}
We first prove $\Tor_1^{R/t^h}(S/t^h, M)$ is almost zero for all $R/t^h$-modules $M$. By considering $0\to tM\to M\to M/t\to 0$, it is enough to prove this when $M$ is annihilated by $t^{h-1}$. Since $t$ is a nonzerodivisor on both $R$ and $S$, we have $$M\otimes_{R/t^h}^\mathbf{L}S/t^h \cong M\otimes_{R/t^{h-1}}^\mathbf{L} (R/t^{h-1}\otimes_{R/t^h}^\mathbf{L}S/t^h)\cong M\otimes_{R/t^{h-1}}^\mathbf{L}S/t^{h-1}.$$ Therefore we are done by induction on $h$.

We next note that if $N$ is an $R/t^h$-module and $S/t^h\otimes_{R/t^h}N$ is almost zero, then $S/t\otimes_{R/t}{N/t}$ is almost zero. Hence $N/t$ is almost zero since $S/t$ is almost faithfully flat over $R/t$. But this implies $N=N/t^h$ is almost zero because $N/t^h$ has a finite filtration with each factor a quotient of $N/t$. This finishes the proof.
\end{proof}
By Scholze's approximation lemma \cite[Corollary 6.7]{ScholzePerfectoidspaces}, there exists $f_1,\dots,f_n$ in $R_n^\flat=R^\flat\langle (T_{g_1}^\flat)^{1/p^\infty},\dots,(T_{g_n}^\flat)^{1/p^\infty}\rangle$ such that
\begin{itemize}
\item $f_i^\sharp\equiv T_{g_i}-g_i$ mod $p^{1/p}$ for every $i$.
\item $U(\frac{T_{g_1}-g_1,\dots,T_{g_n}-g_n}{p^l})=U(\frac{f_1^\sharp,\dots,f_n^\sharp}{p^l})$ as rational subsets of $Y$.
\end{itemize}
Therefore $S_l=\O_Y^+(U(\frac{f_1^\sharp,\dots,f_n^\sharp}{p^l}))$ whose tilt is $\O_{Y^\flat}^+(U(\frac{f_1,\dots,f_n}{(p^\flat)^l}))$ by \cite[Theorem 6.3 (ii)]{ScholzePerfectoidspaces}. In particular, we have
$$S_l/p^{1/p}=\O_{Y^\flat}^+(U(\frac{f_1,\dots,f_n}{(p^\flat)^l}))/(p^\flat)^{1/p}.$$
Thus proving $S_l/p^{1/p}$ is almost faithfully flat over $R/p^{1/p}$is the same as proving $\O_{Y^\flat}^+(U(\frac{f_1,\dots,f_n}{(p^\flat)^l}))/(p^\flat)^l$ is $(p^\flat)^{1/p^\infty}$-almost faithfully flat over $R^\flat/(p^\flat)^{1/p}$.
Now by \cite[Lemma 6.4, third equation in the proof of (i)]{ScholzePerfectoidspaces}, we know that $\O_{Y^\flat}^+(U(\frac{f_1,\dots,f_n}{(p^\flat)^l}))$ is $(p^\flat)^{1/p^\infty}$-almost isomorphic to the $p^\flat$-adic completion of $$B:=R_n^\flat[u_1^{1/p^\infty},\dots,u_n^{1/p^\infty}]/(u_i^{1/p^k}(p^\flat)^{l/p^k} -f_i^{1/p^k}, \forall i, k).$$
Hence it is enough to show that $R^\flat/(p^\flat)^{1/p}\to B/(p^\flat)^{1/p}$ is almost faithfully flat.

At this point, we note that $B$ is the perfection of $$C:=R_n^\flat[u_1,\dots,u_n]/(u_1(p^\flat)^l-f_1,\dots, u_n(p^\flat)^l-f_n),$$ and by our choice of $f_i$, $$f_i={T^{\flat}_{g_i}}-{g^\flat_{i}} \text{ in }R_n^\flat/(p^\flat)^{1/p}= R_n/p^{1/p}$$
for some $g^\flat_{i}\in R_n^\flat$ (which amounts to choose a compatible sequence $\{g_{i,k}\}$ such that $g_{i,k}^{p^k}=g_{i}$ in $R_n/p^{1/p}$). Thus we have
$$C/(p^\flat)^{1/p}\cong R^\flat/(p^\flat)^{1/p}[(T_{g_1}^\flat)^{1/p^\infty},\dots,(T_{g_n}^\flat)^{1/p^\infty}][u_1,\dots,u_n]
/(T^{\flat}_{g_1}-g^\flat_{1},\dots,{T^{\flat}_{g_n}}-{g^\flat_{n}}).$$
It follows that $C/(p^\flat)^{1/p}$ is free over $R^\flat/(p^\flat)^{1/p}$ and that $(p^\flat)^{1/p}, u_1(p^\flat)^l-f_1,\dots, u_n(p^\flat)^l-f_n$ is a regular sequence on $R_n^\flat[u_1,\dots,u_n]$. In particular, $(p^\flat)^{1/p}$ is a nonzerodivisor on $C$.\footnote{We are using the fact that if $y_1,\dots, y_n$ is a regular sequence on a (possibly non-Noetherian, non-local) ring $R$, then $y_1$ is always a nonzerodivisor on $R/(y_2,\dots,y_n)$. By induction it comes down to the case $n=2$, where one can check directly \cite[Paragraph before Proposition 1.1.6]{BrunsHerzog}.} By \autoref{clm.faithfullyflatnzd}, $R^\flat\to C$ is almost faithfully flat mod any power of $(p^\flat)^{1/p}$. Since $R^\flat\to B=C_{\text{perf}}$ is the direct limit of the maps $R^\flat\xrightarrow{\text{Frob}^e} R^\flat\to C$ and the latter is almost faithfully flat mod any power of $(p^\flat)^{1/p}$ ($\text{Frob}^e$ is an isomorphism), $R^\flat\to B$ is almost faithfully flat mod any power of $(p^\flat)^{1/p}$ as desired.
\end{proof}

\subsection{Proof of \autoref{thm.ExistenceofAinfty} and its consequences}We now prove our main result in this section. We need the following construction from \cite{BhattDirectsummandandDerivedvariant}.
\begin{lemma}[Proposition 5.2 in \cite{BhattDirectsummandandDerivedvariant}]
\label{lem.BhattAinfty0}
Let $(A,\m)$ be a complete regular local ring of mixed characteristic $(0,p)$ and dimension $d$. Then there exists a map $A\to R$ such that $R$ admits the structure of an integral perfectoid $K^\circ$-algebra, and such that
\begin{enumerate}[(1)]
  \item $A\to R$ is almost flat.
  \item If $M\otimes_AR$ is almost zero, then $M=0$.
\end{enumerate}
\end{lemma}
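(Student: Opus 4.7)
The plan is to construct $R$ explicitly in the unramified case with perfect residue field, and then leverage Andr\'e's construction (\autoref{subsec.Andre's construction}) to pass to the general case. Throughout, \autoref{lem.AlmostFlatComplete} will let me lift almost flatness from the mod-$p^k$ level to the $p$-adically complete setting.

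First I would apply Cohen's structure theorem, which writes $A$ as a module-finite extension of an unramified subring $A_0 \cong W(k)[[x_2, \ldots, x_d]]$, where the residue field $k$ is made perfect if necessary by passing through an intermediate faithfully flat extension that adjoins $W(k^{\mathrm{perf}})$. For this $A_0$, set
\[
B \;:=\; \varinjlim_n A_0\bigl[p^{1/p^n},\, x_2^{1/p^n},\, \ldots,\, x_d^{1/p^n}\bigr]
\]
and let $R_0 := \widehat{B}$ denote its $p$-adic completion. At each stage $B_n \hookrightarrow B_{n+1}$ one adjoins $p$-th roots along the regular system of parameters $(p, x_2, \ldots, x_d)$, making the transition finite free; consequently $B$ is $p$-torsion free and honestly flat over $A_0$, so $A_0/p^k \to R_0/p^k \cong B/p^k$ is honestly flat for every $k$. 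The ring $R_0$ is $p$-adically complete, $p$-torsion free, and has surjective Frobenius on $R_0/p \cong B/p$ because each element of $B_n/p$ has explicit $p$-th roots in $B_{n+1}/p$; together with $p^{1/p} \in R_0$ this shows $R_0$ is integral perfectoid. \autoref{lem.AlmostFlatComplete} then upgrades flatness mod $p^k$ to almost flatness of $A_0 \to R_0$.

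To verify property (2) for $A_0 \to R_0$, I would reduce via the almost flatness just obtained to $M$ finitely generated, and then observe that $R_0/\m R_0 \cong B/\m B$ is not almost zero: each $p^{1/p^n}$ is a distinct basis element of the finite free $A_0$-module $B_n$, so it survives as a non-zero basis vector in $B_n/\m B_n$ and hence $1 \in B/\m B$ is not annihilated by $p^{1/p^\infty}$. Almost vanishing of $M/\m M \otimes_k R_0/\m R_0$ then forces $M/\m M = 0$, and Nakayama gives $M = 0$. To pass to the general $A$, write $A = A_0[y_1, \ldots, y_s]$ as a module-finite extension and apply Andr\'e's construction to $R_0$ with the finite set $\{y_1, \ldots, y_s\}$, obtaining $R := (R_0)_{y_1, \ldots, y_s}$. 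This $R$ is integral perfectoid, contains compatible $p$-power roots of each $y_i$, and thus receives a map from $A$. By \autoref{lem.Andre} the map $R_0/p^k \to R/p^k$ is almost faithfully flat, which combined with miracle-flatness of $A_0 \to A$ and almost flatness of $A_0 \to R_0$ yields almost flatness of $A/p^k \to R/p^k$ for every $k$; a second application of \autoref{lem.AlmostFlatComplete} gives almost flatness of $A \to R$. Finally, almost faithful flatness of $R_0/\m R_0 \to R/\m R$ transports the non-triviality of $R_0/\m R_0$ to $R/\m R$, so the same Nakayama argument delivers property (2) for $A \to R$.

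The main obstacle I anticipate is verifying property (2) in the abstract setting, since $R$ is built via Andr\'e's adic-space construction rather than an explicit colimit. The crucial step is to show $R/\m R$ is not almost zero; I would deduce this from the explicit non-triviality of $R_0/\m R_0$ via almost faithful flatness of $R_0/p \to R/p$ from \autoref{lem.Andre}, and this is the only place where the abstract construction is essentially used.
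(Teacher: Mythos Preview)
The paper does not prove this lemma; it is quoted directly from Bhatt's paper as Proposition 5.2 there, with no argument supplied. So there is no proof in the paper to compare against, but your proposal can still be checked against Bhatt's argument, and there is a genuine gap in your passage from $A_0$ to $A$.

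Andr\'e's construction, as set up in \autoref{subsec.Andre's construction}, takes an integral perfectoid $K^\circ$-algebra $R$ together with elements $g_1,\ldots,g_n \in R$ and produces an almost faithfully flat extension $R_{g_1,\ldots,g_n}$ in which those elements acquire compatible $p$-power roots. The construction requires the $g_i$ to already lie in $R$: one forms $R\langle T_{g_i}^{1/p^\infty}\rangle$ and passes to the Zariski closed locus cut out by $T_{g_i} - g_i$, which only makes sense for $g_i \in R$. In your argument the $y_1,\ldots,y_s$ generate $A$ over $A_0$; they lie in $A$, not in $R_0$, and there is no map $A \to R_0$ available. Hence $(R_0)_{y_1,\ldots,y_s}$ is not defined, and without it you have no ring receiving a map from $A$. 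Bhatt's actual proof proceeds differently: one adjoins $p$-power roots of a discriminant $g \in A_0$ of the finite extension $A_0 \subseteq A$ (which \emph{does} lie in $R_0$), and then invokes the almost purity theorem to conclude that $A \otimes_{A_0} (R_0)_g$ is, after a mild normalization, integral perfectoid and almost faithfully flat over $A$. The almost purity theorem is the essential missing ingredient, and Andr\'e's construction alone does not replace it.

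A secondary point: even if you had a map $A \to R$, your deduction that almost flatness of $A_0 \to R$ together with flatness of $A_0 \to A$ forces almost flatness of $A \to R$ is not justified as stated. Flatness does not descend along finite flat ring maps without further input; what makes Bhatt's argument work is that his $R$ is (almost) a base change $A \otimes_{A_0} R'$ of an almost flat $A_0$-algebra, so that $\Tor_i^A(M, R)$ is computed by $\Tor_i^{A_0}(M, R')$. Your $R$ is not built as such a base change, so this step would also need repair.
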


If we compare \autoref{lem.BhattAinfty0} with \autoref{thm.ExistenceofAinfty}, the latter satisfies one extra condition (that all elements of $A$ have compatible system of $p$-power roots). Our strategy of the proof is to start with $R$ as in \autoref{lem.BhattAinfty0}, apply the construction in \autoref{subsec.Andre's construction} to $R$ for all the finite sets of elements $g_1,\dots,g_n$, and then take a (huge) completed direct limit. We will show that the conditions $(1)$ and $(2)$ in \autoref{lem.BhattAinfty0} are preserved under these constructions. Below we give details.

\begin{proof}[Proof of \autoref{thm.ExistenceofAinfty}]
We first construct $A\to R$, where $R$ is an integral perfectoid $K^\circ$-algebra as in \autoref{lem.BhattAinfty0}. For any finite set of elements $g_1,\dots,g_n$ of $A$, we have $R\to R_{g_1,\dots,g_n}$ as in \autoref{subsec.Andre's construction}. Since $\{R_{g_1,\dots,g_n}\}$ form a directed system, we set
$$A_\infty=\widehat{\varinjlim}R_{g_1,\dots,g_n},$$
where the direct limit is taken over all the finite sets of elements of $A$ and the completion is $p$-adic. Then $A_\infty$ is an integral perfectoid $K^\circ$-algebra such that all elements of $A$ have a compatible system of $p$-power roots in $A_\infty$. It remains to prove that $A_\infty$ satisfies properties $(b)$ and $(c)$ in \autoref{thm.ExistenceofAinfty}.

\subsection*{Proving (b)}Since $A_\infty/p^k=\varinjlim (R_{g_1,\dots,g_n}/p^k)$ and $R_{g_1,\dots,g_n}/p^k$ is almost flat over $R/p^k$ by \autoref{lem.Andre}, $R/p^k\to A_\infty/p^k$ is almost flat for all $k>0$. Since $A\to R$ is almost flat by \autoref{lem.BhattAinfty0}, it follows that $A/p^k\to A_\infty/p^k$ is almost flat for all $k>0$. This implies $A\to A_\infty$ is almost flat by \autoref{lem.AlmostFlatComplete}.

\subsection*{Proving (c)}
Suppose  that $M\otimes_AA_\infty$ is almost zero but $M\neq 0$. We can choose $A/I$ a nonzero cyclic $A$-submodule of $M$. Since $A_\infty$ is almost flat over $A$, $A/I\otimes_AA_\infty\to M\otimes_AA_\infty$ is an almost injection. Thus $A/I\otimes_AA_\infty$ is almost zero, and it follows that $A/\m\otimes_AA_\infty$ is almost zero since $A/I\otimes_AA_\infty\twoheadrightarrow A/\m\otimes_AA_\infty$. But $R/\m R\cong A/\m\otimes_AR$ is not almost zero by \autoref{lem.BhattAinfty0} (2), and since $R/p\to A_\infty/p$ is almost faithfully flat, it follows that $A/\m\otimes_AA_\infty\cong R/\m R\otimes_{R/p}A_\infty/p$ is not almost zero. This is a contradiction.
\end{proof}

\begin{corollary}
\label{cor.PropertiesofAinfty}
Let $(A,\m)$ be a complete regular local ring of mixed characteristic $(0,p)$ and dimension $d$. Let $A\to A_\infty$ be a map as in \autoref{thm.ExistenceofAinfty}. Then we have 
\begin{enumerate}[(1)]
  \item If $x_1,\dots,x_d$ is a system of parameters of $A$, then it is an almost regular sequence on $A_\infty$, i.e., $(x_1, \ldots, x_i)A_\infty :_{A_{\infty}} x_{i+1} \over (x_1, \ldots, x_i)A_\infty$ is almost zero for every $0\leq i\leq d-1$ and $A_\infty \over (x_1, \ldots, x_{d})A_\infty$ is not almost zero.\footnote{Recall that $(x_1, \ldots, x_i)A_\infty :_{A_{\infty}} x_{i+1}$ denotes the set of elements of $A_{\infty}$ that multiply $x_{i+1}$ into $(x_1, \ldots, x_i)A_\infty$.}
  \item $A_\infty\otimes\Hom_A(N, M)$ is almost isomorphic to $\Hom_{A_\infty}(N\otimes A_\infty, M\otimes A_\infty)$ for all finitely generated $A$-modules $N$ and all $A$-modules $M$.
\end{enumerate}
\end{corollary}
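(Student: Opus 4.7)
My plan is to derive both statements from the two key properties of $A\to A_\infty$ established in \autoref{thm.ExistenceofAinfty}: almost flatness (part (b)) plus the faithfulness property (part (c)). Part (1) is a Koszul-complex translation, while part (2) is a standard free-presentation/Hom argument; neither should require new input beyond these and the fact that $A$ is Noetherian and regular.

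For (1), since $A$ is a regular local ring of dimension $d$, any system of parameters $x_1,\dots,x_d$ is automatically an $A$-regular sequence, so the Koszul complex $K_\bullet(\underline{x}; A)$ is a finite free resolution of $A/(\underline{x})$. Tensoring over $A$ with $A_\infty$ identifies $H_i(K_\bullet(\underline{x}; A_\infty))$ with $\Tor_i^A(A/(\underline{x}), A_\infty)$, which is almost zero for $i > 0$ by almost flatness. I then translate this ``higher Koszul homology is almost zero'' into ``$\underline{x}$ is an almost regular sequence'' by induction on the length: the long exact sequence in Koszul homology obtained by multiplication by $x_{i+1}$ relates $H_\bullet(K_\bullet(x_1,\dots,x_i; A_\infty))$ to $H_\bullet(K_\bullet(x_1,\dots,x_{i+1}; A_\infty))$, and almost-vanishing propagates to give almost-injectivity of multiplication by $x_{i+1}$ on $A_\infty/(x_1,\dots,x_i)A_\infty$, which is precisely the stated colon-ideal condition. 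For the non-triviality clause, $A/(\underline{x})$ is a nonzero $A$-module of finite length, and $A/(\underline{x}) \otimes_A A_\infty \cong A_\infty/(\underline{x})A_\infty$ is not almost zero by part (c) of \autoref{thm.ExistenceofAinfty}.

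For (2), because $A$ is Noetherian and $N$ is finitely generated, choose a finite free presentation $F_1 \to F_0 \to N \to 0$. Applying $\Hom_A(-, M)$ gives a left exact sequence
$$0 \to \Hom_A(N, M) \to \Hom_A(F_0, M) \to \Hom_A(F_1, M).$$
Almost flatness of $A_\infty$ over $A$ implies that tensoring this with $A_\infty$ preserves exactness up to almost isomorphism. Since each $F_i$ is finite free, the canonical base-change map $\Hom_A(F_i, M) \otimes_A A_\infty \to \Hom_{A_\infty}(F_i \otimes_A A_\infty, M \otimes_A A_\infty)$ is an honest isomorphism. On the other hand, applying $\Hom_{A_\infty}(-, M \otimes_A A_\infty)$ to the right exact sequence $F_1 \otimes_A A_\infty \to F_0 \otimes_A A_\infty \to N \otimes_A A_\infty \to 0$ realizes $\Hom_{A_\infty}(N \otimes_A A_\infty, M \otimes_A A_\infty)$ as the kernel of the same map. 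Matching these two descriptions of the kernel yields the almost isomorphism.

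The main obstacle I expect is the bookkeeping in the Koszul induction for (1): one needs to verify carefully that the ``almost'' hypothesis, measured by $(p^{1/p^\infty})$, passes through each connecting homomorphism and each short exact sequence without being inflated, so that the resulting regularity statement is still almost with respect to the correct ideal. Once that translation is nailed down, both parts reduce to formal homological manipulations applied to the content of \autoref{thm.ExistenceofAinfty}.
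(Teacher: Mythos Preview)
Your proposal is correct and follows essentially the same route as the paper. For (2) your argument is identical to the paper's; for (1) the paper simply says ``since $A\to A_\infty$ is almost flat, the quotient is almost zero'' without unpacking the Koszul formalism, but your Koszul long exact sequence is just a more explicit version of the same observation (the colon quotient is a subquotient of a $\Tor_1$, hence almost zero), and your concern about ``almost'' being inflated is not an issue since almost zero modules form a Serre subcategory.
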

\begin{proof}
$(1)$: $x_1,\dots,x_{d}$ is a regular sequence on $A$ because $A$ is regular. Since $A\to A_\infty$ is almost flat by \autoref{thm.ExistenceofAinfty} $(b)$, it follows that $(x_1, \ldots, x_i)A_\infty : x_{i+1} \over (x_1, \ldots, x_i)A_\infty$ is almost zero. Moreover, $A_\infty \over (x_1, \ldots, x_{d})A_\infty$ is not almost zero by \autoref{thm.ExistenceofAinfty} $(c)$.

$(2)$: Let $A^{\oplus l}\to A^{\oplus n}\to N\to 0$ be a presentation of $N$, then we have $A_\infty^{\oplus l}\to A_\infty^{\oplus n}\to N\otimes A_\infty\to 0$. We look at the following commutative diagram:
\footnotesize
\[\xymatrix{
0 \ar[r] & A_\infty\otimes\Hom_A(N, M) \ar[d]\ar[r] & A_\infty\otimes\Hom_A(A^{\oplus n}, M) \ar[d]^\cong \ar[r] & A_\infty\otimes\Hom_A(A^{\oplus l}, M) \ar[d]^\cong \\
0 \ar[r] & \Hom_{A_\infty}(N\otimes A_\infty, M\otimes A_\infty) \ar[r] & \Hom_{A_\infty}(A_\infty^{\oplus n}, M\otimes A_\infty) \ar[r] & \Hom_{A_\infty}(A_\infty^{\oplus l}, M\otimes A_\infty)
}
\]
\normalsize
The second line is exact, and the first line is almost exact because $A_\infty$ is almost flat over $A$ by \autoref{thm.ExistenceofAinfty} $(b)$. The conclusion follows by chasing the diagram. 
\end{proof}

\begin{remark}
\label{remark--recentprogressonBigCM}
We record here some very recent progress towards constructing integral perfectoid (almost) big Cohen-Macaulay algebras. For simplicity we will keep our notations: $A$ is a complete regular local ring of mixed characteristic $(0,p)$.
\begin{enumerate}
  \item In Bhatt's unpublished note \cite[Corollary 9.4.7]{BhattLectureNotesPerfectoidSpaces}, it is proved that there exists an integral perfectoid $A$-algebra $A_{\infty,\infty}$ that is almost flat over $A$ mod $p^k$, and such that $A_{\infty,\infty}$ is absolutely integrally closed: each monic polynomial has a root in $A_{\infty,\infty}$. In particular every element of $A_{\infty,\infty}$ admits a compatible system of $p$-power roots. It follows from the same proof of \autoref{thm.ExistenceofAinfty} that $A_{\infty,\infty}$ satisfies the conclusion of \autoref{thm.ExistenceofAinfty}. Bhatt's construction is similar to ours in spirit (i.e., iterate Andr\'{e}'s construction and take certain completed direct limit). 
  \item Shimomoto \cite[Main Theorem 2]{ShimomotoIntegralperfectoidbigCMviaAndre} and Andr\'{e} \cite[Theorem 3.1.1]{AndreWeaklyFunctorialBigCM} proved that one can construct an integral perfectoid $A$-algebra $B$ that is a big Cohen-Macaulay $A^+$-algebra. Here $A^+$ denotes the absolute integral closure of $A$: the integral closure of $A$ in an algebraic closure of its fraction field. Since $B$ is an $A^+$ algebra, elements of $A$ have compatible system of $p$-power roots in $B$, and $B$ is (honestly) faithfully flat over $A$ because $A$ is regular and $B$ is big Cohen-Macaulay. Therefore $B$ also satisfies the conclusion of \autoref{thm.ExistenceofAinfty}. The construction of $B$ is quite difficult and relies on Andr\'{e}'s perfectoid Abhyankar lemma \cite{AndrePerfectoidAbhyankarLemma}.
\end{enumerate}
\end{remark}

\begin{setting}
\label{Setting}
Throughout the rest of this article. We fix our notations as follows:
\begin{itemize}
\item $(A,\m)$ will always denote a complete regular local ring of mixed characteristic $(0,p)$ and dimension $d$.
\item $A_\infty$ will always denote a fixed integral perfectoid $K^\circ$-algebra that satisfies the conclusion of \autoref{thm.ExistenceofAinfty}. The existence of such $A_\infty$ follows from \autoref{thm.ExistenceofAinfty} (see also \autoref{remark--recentprogressonBigCM}).
\item $p^{1/p^\infty}z=0$ means $z$ is \emph{almost zero}. More precisely, this means $p^{1/p^k}z=0$ for all positive integers $k$, or equivalently, $z$ is $(p^{1/p^\infty})$-torsion where $(p^{1/p^\infty})\subseteq K^\circ$ is the ideal that we use to measure almost mathematics.
\end{itemize}
We caution the reader that, although $A_\infty$ is reduced (since it is integral perfectoid), taking $p^e$-th roots in $A_\infty$ is generally not unique because we are working in mixed characteristic. In particular, elements of $A$ may have many compatible systems of $p$-power roots in $A_\infty$. This will be addressed carefully throughout this paper.
\end{setting}

We end this section with the following lemma.
\begin{lemma}
\label{lem.AlmostKernel=0}
Let $c\neq 0$ be an element of $A$, and fix any compatible system of $p$-power roots $\{c^{1/p^e}\}_{e=1}^\infty$ in $A_\infty$. Then
$$\{\eta\in H_\m^d(A) \;|\; c^{1/p^e}\eta=0 \in H_\m^d(A_\infty) \text{ for all $e$}\}=0.$$
\end{lemma}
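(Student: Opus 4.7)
The plan is to argue by contradiction: suppose there exists $0 \neq \eta \in H^d_\m(A)$ with $c^{1/p^e}\eta = 0$ in $H^d_\m(A_\infty)$ for every $e \geq 0$, and derive $c = 0$. Set $I := \Ann_A(\eta)$, so that $A\eta \cong A/I \hookrightarrow H^d_\m(A)$. Because the Artinian module $H^d_\m(A)$ is $\m$-supported and the submodule $A\eta$ is cyclic (hence finitely generated), $I$ must be $\m$-primary; in particular $I \subseteq \m$, which is what will make Krull's intersection theorem available at the end.

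Transfer to $A_\infty$: the top Čech cohomology with respect to a system of parameters commutes with the tensor product $-\otimes_A A_\infty$ (cokernels and localizations do), so $H^d_\m(A_\infty) = H^d_\m(A) \otimes_A A_\infty$. Tensoring $A/I \hookrightarrow H^d_\m(A)$ with $A_\infty$ produces $A_\infty/IA_\infty \to H^d_\m(A_\infty)$, whose kernel is $\Tor_1^A(H^d_\m(A)/A\eta, A_\infty)$ and thus almost zero by \autoref{thm.ExistenceofAinfty}(b). The hypothesis $c^{1/p^e}\eta = 0$ in $H^d_\m(A_\infty)$ places the class of $c^{1/p^e}$ in this almost zero kernel, yielding
\[
p^{1/p^k} c^{1/p^e} \in IA_\infty \quad \text{for all } k, e \geq 0.
\]

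Next, I will raise these containments to the $p^e$-th power using the compatible systems of roots: $(p^{1/p^k}c^{1/p^e})^{p^e} = p^{p^{e-k}}c \in I^{p^e}A_\infty$. Taking $k \geq e$ and reindexing $k' := k - e$ gives $p^{1/p^{k'}}c \in I^{p^e}A_\infty$ for all $k', e \geq 0$, i.e. the image of $c$ in $A_\infty/I^{p^e}A_\infty$ is annihilated by every $p^{1/p^{k'}}$ and hence almost zero. To descend this to $A$, apply \autoref{thm.ExistenceofAinfty}(c) to the cyclic module $\bar c \cdot A/I^{p^e}$: tensoring its inclusion into $A/I^{p^e}$ with $A_\infty$ is almost injective (almost flatness), while its image $\bar c \cdot A_\infty/I^{p^e}A_\infty$ is almost zero by the previous line. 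So $\bar c \cdot A/I^{p^e} \otimes_A A_\infty$ is almost zero, forcing $\bar c \cdot A/I^{p^e} = 0$ and thus $c \in I^{p^e}A$ for every $e$. Krull's intersection theorem in the Noetherian local ring $A$ now gives $c \in \bigcap_n I^n = 0$, contradicting $c \neq 0$.

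The main subtlety is that the almost annihilation in the first step produces only $p^{1/p^k} c^{1/p^e} \in IA_\infty$ rather than $c^{1/p^e} \in IA_\infty$, because $A_\infty/IA_\infty$ may genuinely have $(p^{1/p^\infty})$-torsion. Raising to the $p^e$-th power is precisely the maneuver that converts the stray almost factor $p^{1/p^k}$ into the more convenient $p^{1/p^{k-e}}$ while simultaneously inflating $IA_\infty$ to the arbitrarily deep power $I^{p^e}A_\infty$; this coordinated trade-off is what allows the Krull intersection theorem to close out the argument after descending from $A_\infty$ back to $A$.
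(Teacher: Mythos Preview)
Your proof is correct and follows essentially the same strategy as the paper: argue by contradiction, use almost flatness to deduce $p^{1/p^k}c^{1/p^e}$ lies in an ideal of $A_\infty$, raise to the $p^e$-th power, descend back to $A$, and finish with a Krull-intersection-type contradiction. The differences are minor and technical: the paper first reduces to the socle element $\eta = \frac{1}{x_1\cdots x_d}$ (so $I = \m$) and invokes almost flatness via the almost regularity of $x_1,\dots,x_d$ on $A_\infty$ using explicit \v{C}ech/colon-ideal manipulations, whereas you work with an arbitrary nonzero $\eta$ and its $\m$-primary annihilator $I$ and package the same almost-flatness input as the almost vanishing of $\Tor_1^A(H^d_\m(A)/A\eta, A_\infty)$. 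Your packaging is slightly cleaner and yields the stronger intermediate containment $p^{1/p^k}c^{1/p^e}\in IA_\infty$ with $k$ independent of $e$ (the paper obtains only $k=e$, leading to $pc$ rather than $c$ in the descended conclusion), but the two arguments are the same in substance.
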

\begin{proof}
Recall that $H_\m^d(A)$ is the injective hull of $A/\m$ (since $A$ is Gorenstein \cite[Proposition 3.5.4]{BrunsHerzog}), and by the \v{C}ech complex description of local cohomology \cite[page 130]{BrunsHerzog}, the socle element (up to multiplication by a unit) of $H_\m^d(A)$ can be expressed as $\frac{1}{x_1\cdots x_d}$ where $x_1,\dots,x_d$ is a regular system of parameters of $A$. Since the injective hull is an essential extension, we get that $\frac{1}{x_1\cdots x_d}$ lies in the left hand side if the latter is not zero.
Thus we have $\frac{c^{1/p^e}}{x_1\cdots x_d}=0$ in $H_\m^d(A_\infty)$ for every $e$. This means for every $e$, there exists a $w$ depending on $e$ such that
$$c^{1/p^e}(x_1\cdots x_d)^{w}\in (x_1^{w+1},\dots,x_d^{w+1})A_\infty.$$
Since $x_1,\dots,x_d$ is an almost regular sequence on $A_\infty$ by \autoref{cor.PropertiesofAinfty}, this implies
$p^{1/p^e}c^{1/p^e}\in (x_1,\dots,x_d)A_\infty$\footnote{In general, $z(x_1\cdots x_d)^{w}\in (x_1^{w+t},\dots,x_d^{w+t})A_\infty$ implies $p^{1/p^\infty}z\in (x_1^t,\dots,x_d^t)A_\infty$. The condition implies $x_1^w(z(x_2\cdots x_d)^w-ax_1^t)\in (x_2^{w+t},\dots,x_d^{w+t})A_\infty$ for some $a\in A_\infty$. So $p^{1/p^\infty}(z(x_2\cdots x_d)^w-a_1x_1^t) \in (x_2^{w+t},\dots,x_d^{w+t})A_\infty$ and thus $p^{1/p^\infty}z(x_2\cdots x_d)^w\in (x_1^t, x_2^{w+t},\dots,x_d^{w+t})A_\infty.$ Note that we have dropped the exponent of $x_1$ at the expense of multiplying by $p^{1/p^\infty}$. Do the same thing for $x_2,\dots,x_d$ consecutively, we have $p^{1/p^\infty}z\in (x_1^t,\dots,x_d^t)A_\infty.$ This fact will be used in Section 5.}for every $e$ and thus $pc\in (x_1,\dots,x_d)^{p^e}A_\infty$ for every $e$. Now we consider the following commutative diagram:
\[
\xymatrix{
\frac{A}{(x_1,\dots,x_d)^{p^e}} \ar[r] & \frac{A_\infty}{(x_1,\dots,x_d)^{p^e}A_\infty}\\
\frac{A}{(x_1,\dots,x_d)^{p^e}:pc } \ar[r] \ar@{^{(}->}[u]  & \frac{A_\infty}{((x_1,\dots,x_d)^{p^e}:pc)A_\infty} \ar[u]_\phi
}
\]
where the vertical maps send $1$ to $pc$. Since $pc\in (x_1,\dots,x_d)^{p^e}A_\infty$, we have $\phi=0$. 
On the other hand, $\phi$ is an almost injection because $A_\infty$ is almost flat over $A$ by \autoref{thm.ExistenceofAinfty}. Thus $\frac{A_\infty}{((x_1,\dots,x_d)^{p^e}:pc)A_\infty}$ is almost zero, so $\frac{A}{(x_1,\dots,x_d)^{p^e}:pc }=0$ by \autoref{thm.ExistenceofAinfty}. Therefore $pc\in (x_1,\dots,x_d)^{p^e}A$ for every $e$, which is a contradiction.
\end{proof}

\section{Perfectoid test ideals}
\label{sec.PerfMultiplierTestIdeals}

Our goal in this section is to introduce a mixed characteristic ideal which is analogous to the multiplier ideal which appeared in higher dimensional algebraic geometry and the test ideal which appeared independently in characteristic $p > 0$ commutative algebra \cite{LazarsfeldPositivity2,HochsterHunekeTC1}. We recall the reader that we are using the notations as in \autoref{Setting}. 

\begin{definition}
Fix an ideal $\fra \subseteq A$ and a real number $t \geq 0$.  We define
\[
\begin{array}{r}
0^{\mystar \fra^t}_{H_\m^d(A)}=\left\{\eta\in H_\m^d(A)\;|\; \forall e>0, p^{1/p^\infty} f^{1/p^e} \eta=0 \text{ in } H_\m^d(A_\infty) \text{ for all $f \in \fra^{\lceil tp^e \rceil}$ }\right\},
\end{array}
 \]
where $f^{1/p^e}$ denotes all possible $p^e$-th roots of $f$ in $A_\infty$ that are part of a compatible system of $p$-power roots.
If we also fix a sequence of elements $\{ f_1, \dots, f_n \}$ generating $\fra$ and for each $i$, one fixed compatible system of $p$-power roots $\{f_i^{1/p^e}\}_{e=1}^\infty$ for $f_i$ (these data we denote by $[f_1,\dots,f_n]$), then we set
\[
\begin{array}{rl}
0^{\mystar [f_1,\dots,f_n]^t}_{H_\m^d(A)}=\Big\{\eta\in H_\m^d(A)\;| & \forall e>0, p^{1/p^\infty} g\eta=0 \text{ in } H_\m^d(A_\infty) \\
& \text{for all $g=\prod_{i=1}^af_{j_i}^{1/p^e}$ where $a\geq tp^e$}\Big\},
\end{array}
 \]
In the above product, $f_{j_i}^{1/p^e}$ runs over our chosen elements in $\{f_1^{1/p^e}, \dots, f_n^{1/p^e} \}$ (note that $j_i$ belongs to the set $\{1,\dots,n\}$, and repeats are allowed).
We then define
\[
\mytau^\sharp(\fra^t)=\mytau^\sharp(A, \fra^t)=\Ann_A(0^{\mystar \fra^t}_{H_\m^d(A)})
\]
and
\[
\mytau^\sharp([f_1,\dots,f_n]^t)=\mytau^\sharp(A, [f_1,\dots,f_n]^t)=\Ann_A(0^{\mystar [f_1,\dots,f_n]^t}_{H_\m^d(A)}).
\]
\end{definition}

We will usually write $\mytau^\sharp([\underline{f}]^t)$ for $\mytau^\sharp([f_1,\dots,f_n]^t)$ when $f_1,\dots,f_n$ is clear from the context. In the case that $\fra = (f)$ is principal, we will write $\mytau^\sharp(f^t)$ for $\mytau^\sharp(\fra^t)$. We emphasize that a priori this is different from $\mytau^\sharp([f]^t)$. This is because in the former one considers {\it all} possible $p^e$-th roots of elements in $(f)$ in $A_\infty$ that are part of a compatible system of $p$-power roots, while in the latter one we {\it fix} a \emph{single} compatible system of $p$-power roots of $f$.

\begin{remark}
A key part of the definition of $0^{\mystar [f_1,\dots,f_n]^t}_{H_\m^d(A)}$ and $\mytau^\sharp([f_1,\dots,f_n]^t)$ is that we are not considering $p^e$-th roots of elements like $f_1 + f_2$, and this is important, because unlike in characteristic $p>0$, taking $p^e$-th roots is not additive in mixed characteristic. Another key point is that in the notation $[f_1, \dots, f_n]^t$ we control directly \emph{which} $p^e$-th roots we are taking.
\end{remark}

The following properties are straightforward from the definition.

\begin{proposition}
\label{prop.EasyContainments}
Fix $\{ f_1, \ldots, f_n\}$ a sequence of generators of an ideal $\fra \subseteq A$ and for each $f_i$ fix a compatible system of $p$-power roots of $f_i$ in $A_\infty$ (in order to define $\mytau^\sharp([f_1, \ldots, f_n]^t)$). Then we have
\begin{enumerate}
\item For every $t\geq 0$, $\mytau^\sharp([f_1, \ldots, f_n]^t) \subseteq \mytau^\sharp(\fra^t).$
\item If $t'>t$, then $\mytau^\sharp(\fra^{t'})\subseteq \mytau^\sharp(\fra^t)$ and $\mytau^\sharp([f_1, \ldots, f_n]^{t'}) \subseteq \mytau^\sharp([f_1, \ldots, f_n]^t)$.
\item If $\fra\subseteq \frb$, and $f_1,\dots,f_n,f_{n+1},\dots,f_m$ is a fixed set of generators of $\frb$ (we should also fix a compatible system of $p$-power roots of each $f_i$), then for every $t\geq 0$, $\mytau^\sharp(\fra^t)\subseteq \mytau^\sharp(\frb^t)$ and $\mytau^\sharp([f_1, \ldots, f_n]^t)\subseteq \mytau^\sharp([f_1, \ldots, f_m]^t)$.
\end{enumerate}
\end{proposition}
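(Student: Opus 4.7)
The unifying observation is that all three statements follow formally from reverse containments at the level of the submodules $0^{\mystar \bullet}_{H_\m^d(A)} \subseteq H_\m^d(A)$: since $\mytau^\sharp(\bullet) = \Ann_A(0^{\mystar \bullet}_{H_\m^d(A)})$ and $\Ann_A$ is order-reversing on submodules of a fixed module, any containment $0^{\mystar X}_{H_\m^d(A)} \subseteq 0^{\mystar Y}_{H_\m^d(A)}$ immediately yields $\mytau^\sharp(Y) \subseteq \mytau^\sharp(X)$. So the entire proof reduces to checking the appropriate set-theoretic inclusions of the defining conditions, keeping careful track of which side is the ``stronger'' condition (and hence defines the \emph{smaller} submodule, whose annihilator is the \emph{larger} ideal).

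For part (1), the key point is that every element $g = \prod_{i=1}^a f_{j_i}^{1/p^e}$ used in the definition of $0^{\mystar[f_1,\dots,f_n]^t}_{H_\m^d(A)}$ is one particular $p^e$-th root of $\prod_{i=1}^a f_{j_i}$, which lies in $\fra^a \subseteq \fra^{\lceil tp^e\rceil}$ (here one uses that $a$ is an integer, so $a \geq tp^e$ forces $a \geq \lceil tp^e\rceil$). Moreover, $g$ extends to the compatible system $\{\prod_i f_{j_i}^{1/p^{e+k}}\}_{k\geq 0}$ built from the fixed systems for the $f_{j_i}$, so $g$ qualifies as one of the ``$f^{1/p^e}$'s'' ranged over in the definition of $0^{\mystar \fra^t}_{H_\m^d(A)}$. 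Hence $0^{\mystar \fra^t}_{H_\m^d(A)} \subseteq 0^{\mystar[f_1,\dots,f_n]^t}_{H_\m^d(A)}$, and taking annihilators gives (1).

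For part (2), if $t' > t$ then $\lceil t'p^e\rceil \geq \lceil tp^e\rceil$, so $\fra^{\lceil t'p^e\rceil} \subseteq \fra^{\lceil tp^e\rceil}$; the condition defining $0^{\mystar \fra^{t'}}_{H_\m^d(A)}$ must be checked on fewer elements, and is therefore weaker, giving $0^{\mystar \fra^t}_{H_\m^d(A)} \subseteq 0^{\mystar \fra^{t'}}_{H_\m^d(A)}$. The bracketed analogue is identical: $a \geq t'p^e$ forces $a \geq tp^e$, so fewer products $g$ need be tested for $t'$, and the corresponding submodule gets larger. Taking annihilators yields both containments in (2). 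For part (3), if $\fra \subseteq \frb$ then $\fra^{\lceil tp^e\rceil} \subseteq \frb^{\lceil tp^e\rceil}$, so the defining condition for $0^{\mystar \frb^t}_{H_\m^d(A)}$ is stronger (more elements $f$ to kill), yielding $0^{\mystar \frb^t}_{H_\m^d(A)} \subseteq 0^{\mystar \fra^t}_{H_\m^d(A)}$ and hence $\mytau^\sharp(\fra^t) \subseteq \mytau^\sharp(\frb^t)$ after annihilators. The bracketed version uses that, with fixed compatible systems for $f_1,\dots,f_m$, the set of products $\prod_{i=1}^a f_{j_i}^{1/p^e}$ with indices $j_i \in \{1,\dots,n\}$ is a subset of the corresponding set with $j_i \in \{1,\dots,m\}$; so the $[f_1,\dots,f_m]$-condition is stronger, and the same annihilator argument concludes.

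There is no real obstacle here: the entire argument is bookkeeping about which ``defining set'' of killing conditions is larger, combined with the formal reversal induced by $\Ann_A$. The only subtlety worth isolating explicitly is the integer rounding in part (1) (ensuring $a \geq tp^e$ implies $a \geq \lceil tp^e\rceil$) and the observation that a product of fixed compatible systems is itself a compatible system, so that the bracketed $p^e$-th roots genuinely count among the ``all possible compatible $p^e$-th roots'' used in the ideal-version definition.
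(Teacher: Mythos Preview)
Your proof is correct and is precisely the straightforward unwinding of the definitions that the paper has in mind; the paper itself gives no proof beyond the remark that these properties are ``straightforward from the definition,'' and your annihilator-reversal bookkeeping is exactly what that remark abbreviates.
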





One of the key properties of multiplier and test ideals is the fact that small positive perturbations of the exponent do not change the ideal.  We do not know if this is true for $\mytau^\sharp$.

\begin{question}
Is it true that $\mytau^\sharp(\fra^t)=\mytau^\sharp(\fra^{t+\epsilon})$ or $\mytau^\sharp([f_1, \ldots, f_n]^t)=\mytau^\sharp([f_1, \ldots, f_n]^{t+\epsilon})$ for all $\epsilon \ll 1$?
\end{question}

Due to this, we make the following definition. This will be our real definition of perfectoid test ideals.

\begin{definition}
Fix $\{ f_1, \ldots, f_n\}$ a sequence of generators of an ideal $\fra \subseteq A$ and for each $f_i$ fix a compatible system of $p$-power roots of $f_i$ (in order to define $\mytau^\sharp([f_1, \ldots, f_n]^t)$). We define $\mytau(\fra^t)=\mytau(A, \fra^t)$ to be the union or sum of $\{\mytau^\sharp(\fra^{t'})\}$ for all $t'>t$. Since $\mytau^\sharp(\fra^{t'})\subseteq \mytau^\sharp(\fra^t)$ for all $t'>t$, by the Noetherian property of $A$, this is $\mytau^\sharp(\fra^{t+\epsilon})$ when $\epsilon\ll 1$. Similarly, we define $\mytau([f_1, \ldots, f_n]^t)=\mytau(A, [f_1, \ldots, f_n]^t)$ to be $\mytau^\sharp([f_1, \ldots, f_n]^{t+\epsilon})$ for $\epsilon\ll 1$.
\end{definition}

It follows from the definition and \autoref{prop.EasyContainments} that we have:
\begin{equation}
\label{equation--easy containment}
\tag{$\dagger$}
\mytau^\sharp(\fra^t)\supseteq \mytau^\sharp([f_1, \ldots, f_n]^t)\supseteq \mytau([f_1, \ldots, f_n]^t)\subseteq \mytau(\fra^t)\subseteq \mytau^\sharp(\fra^t)
\end{equation}

As with multiplier ideals and test ideals, in the notation $\mytau(\fra^t)$, $\fra^t$ is a formal object.  This can cause confusion when $t$ is an integer since, for instance, $\fra^2$ makes sense on its own.  Thus it is natural to ask whether $\mytau^\sharp((\fra^n)^t) = \mytau^\sharp(\fra^{nt})$ and whether $\mytau((\fra^n)^t) = \mytau(\fra^{nt})$. We \emph{do not} know how to do this with our definition of $\mytau^\sharp$, but as we shall see it is not difficult to show this for $\mytau(\fra^t)$ and $\mytau([f_1, \ldots, f_n]^t)$. This will be crucial for our later purposes (and suggests that $\mytau$ is a better definition than $\mytau^\sharp$).

\begin{lemma}
\label{lem:UseEBigForClosure}
In the definition of $0^{\mystar \fra^t}_{H_\m^d(A)}$ and $0^{\mystar [f_1,\dots,f_n]^t}_{H_\m^d(A)}$, one may restrict to $e \gg 0$.
\end{lemma}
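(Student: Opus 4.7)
The plan is to prove the nontrivial direction: if $\eta$ satisfies the stated almost-vanishing condition for all $e \geq e_0$, then the same condition holds for all $e > 0$. The tool in both cases is a ``raise to the $p^{e_0-e}$-th power'' trick that promotes a witness at level $e < e_0$ to one at level $e_0$, where the hypothesis is available.

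For the ideal version $0^{\mystar \fra^t}_{H_\m^d(A)}$, I would fix $e < e_0$, take any $f \in \fra^{\lceil t p^e \rceil}$ together with a chosen compatible system $\{f^{1/p^k}\}_{k \geq 0}$, and then form the new system $h_k := (f^{1/p^k})^{p^{e_0-e}}$. A direct check shows $\{h_k\}$ is a compatible system of $p$-power roots of $h_0 = f^{p^{e_0-e}}$, and moreover $h_{e_0} = (f^{1/p^{e_0}})^{p^{e_0-e}} = f^{1/p^e}$. The key numerical observation is that $p^{e_0-e}\lceil tp^e\rceil$ is an integer of size at least $tp^{e_0}$, so $h_0 \in \fra^{\lceil tp^{e_0}\rceil}$. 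Applying the level-$e_0$ hypothesis to $h_0$ and its distinguished $p^{e_0}$-th root $h_{e_0}$ then yields $p^{1/p^\infty} f^{1/p^e} \eta = 0$, as required.

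For the generator version $0^{\mystar [f_1,\dots,f_n]^t}_{H_\m^d(A)}$, I would fix $e < e_0$ and a monomial $g = \prod_{i=1}^a f_{j_i}^{1/p^e}$ with $a \geq tp^e$, where each $f_{j_i}^{1/p^e}$ comes from the prescribed compatible system. The identity $f_{j_i}^{1/p^e} = (f_{j_i}^{1/p^{e_0}})^{p^{e_0-e}}$, valid inside each fixed compatible system, lets me rewrite $g = \prod_{\ell=1}^{ap^{e_0-e}} f_{k_\ell}^{1/p^{e_0}}$, where each original index $j_i$ is now repeated $p^{e_0-e}$ times. Since $ap^{e_0-e} \geq tp^e \cdot p^{e_0-e} = tp^{e_0}$, this expression is admissible at level $e_0$, so the hypothesis there produces $p^{1/p^\infty} g \eta = 0$.

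I do not anticipate a serious obstacle: both arguments are short, and the only items requiring care are the bookkeeping for the auxiliary compatible system $\{h_k\}$ in the ideal case and the integer inequality $p^{e_0-e}\lceil tp^e\rceil \geq \lceil tp^{e_0}\rceil$. Should anything nontrivial arise, it would be in the ideal case, where one must make sure that the chosen root $f^{1/p^e}$ (an arbitrary element of \emph{some} compatible system for $f$) really does appear inside the newly constructed compatible system for $f^{p^{e_0-e}}$; the calculation above confirms it does.
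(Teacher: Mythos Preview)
Your proposal is correct and follows essentially the same approach as the paper: both arguments raise $f$ to the power $p^{e_0-e}$ to land in $\fra^{\lceil tp^{e_0}\rceil}$ and then use the induced compatible system to recover the original $f^{1/p^e}$ as the $p^{e_0}$-th root. The paper phrases this as $f^{1/p^n} = (f^{p^e})^{1/p^{n+e}}$ for $e \geq e_0$, while you build the auxiliary system $\{h_k\}$ explicitly, but the content is identical; you also supply the details for the generator version that the paper omits.
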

\begin{proof}
Indeed, restricting the $e$ to those $\gg 0$ results in fewer conditions and hence a \emph{larger} subset of $H^d_{\fram}(A)$.  On the other hand suppose that $\eta$'s image in $H^d_{\fram}(A_{\infty})$ is annihilated by $p^{1/p^{\infty}}f^{1/p^e}$ for all $f \in \fra^{\lceil t p^e \rceil}$ where $f^{1/p^e}$ is part of a compatible system of $p$-power roots, for $e \geq e_0$.  Fix now some $n \geq 0$ and $f \in \fra^{\lceil tp^n \rceil}$.  Then for $e \geq e_0$,
\[
p^{1/p^{\infty}} f^{1/p^n} \eta = p^{1/p^{\infty}} (f^{p^e})^{1/p^{n+e}} \eta = 0
\]
since $f^{p^e} \in \fra^{p^e \lceil tp^n \rceil} \subseteq \fra^{\lceil t p^{e+n} \rceil}$.  This shows that $\eta \in 0^{\mystar \fra^t}_{H_\m^d(A)}$. A similar argument works for $0^{\mystar [f_1,\dots,f_n]^t}_{H_\m^d(A)}$ and we omit the details.
\end{proof}

\begin{proposition}
\label{proposition--test ideals of powers}
For all positive integers $n$, $\mytau((\fra^n)^{t})=\mytau(\fra^{nt})$.
\end{proposition}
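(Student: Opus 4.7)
The plan is to prove the two containments separately by comparing the ``defining sets of annihilators'' $0^{\mystar (\fra^n)^s}_{H^d_\m(A)}$ and $0^{\mystar \fra^{ns}}_{H^d_\m(A)}$ for nearby exponents, and then passing to the small-$\epsilon$ limit that defines $\mytau$ out of $\mytau^\sharp$.

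For the easy containment $\mytau((\fra^n)^t) \subseteq \mytau(\fra^{nt})$, I would simply observe that for every $e>0$,
\[
(\fra^n)^{\lceil tp^e\rceil}=\fra^{n\lceil tp^e\rceil}\subseteq \fra^{\lceil ntp^e\rceil}.
\]
Hence the $(\fra^n)^s$-condition is a \emph{weaker} almost-vanishing requirement than the $\fra^{ns}$-condition, giving $0^{\mystar\fra^{ns}}\subseteq 0^{\mystar (\fra^n)^s}$ and therefore $\mytau^\sharp((\fra^n)^s)\subseteq \mytau^\sharp(\fra^{ns})$ for every $s$. Applying this with $s=t+\epsilon$ and letting $\epsilon\to 0^+$ produces the desired inclusion.

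For the harder direction $\mytau(\fra^{nt})\subseteq \mytau((\fra^n)^t)$, I plan to use the perturbation built into the definition of $\mytau$. Fix $\epsilon>0$ small and set $\epsilon'=2n\epsilon$ (any $\epsilon'>n\epsilon$ suffices). The key algebraic fact is that any element of $\fra^k$ lies in $(\fra^n)^{\lfloor k/n\rfloor}$: each length-$k$ product of generators can be regrouped into $\lfloor k/n\rfloor$ blocks of $n$ generators, times a harmless leftover in $\fra^{k-n\lfloor k/n\rfloor}$. Applied to $k=\lceil(nt+\epsilon')p^e\rceil$, a direct numerical check shows that for all $e$ sufficiently large (depending on $\epsilon,\epsilon'$),
\[
\lfloor \lceil(nt+\epsilon')p^e\rceil/n\rfloor \;\geq\; \lceil(t+\epsilon)p^e\rceil, \qquad\text{so}\qquad \fra^{\lceil(nt+\epsilon')p^e\rceil}\subseteq (\fra^n)^{\lceil(t+\epsilon)p^e\rceil}.
\]

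With this, given $\eta\in 0^{\mystar(\fra^n)^{t+\epsilon}}_{H^d_\m(A)}$, I would take any $e\gg 0$, any $f\in \fra^{\lceil(nt+\epsilon')p^e\rceil}$, and any compatible $p^e$-th root $f^{1/p^e}$ in $A_\infty$; the containment above shows $f\in(\fra^n)^{\lceil(t+\epsilon)p^e\rceil}$, hence $p^{1/p^\infty}f^{1/p^e}\eta=0$ in $H^d_\m(A_\infty)$. Invoking \autoref{lem:UseEBigForClosure} to reduce to $e\gg 0$ on the $\fra^{nt+\epsilon'}$-side as well, this gives $\eta\in 0^{\mystar\fra^{nt+\epsilon'}}_{H^d_\m(A)}$, and therefore $\mytau^\sharp(\fra^{nt+\epsilon'})\subseteq \mytau^\sharp((\fra^n)^{t+\epsilon})$. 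Letting $\epsilon\to 0^+$ (so $\epsilon'\to 0^+$) yields $\mytau(\fra^{nt})\subseteq \mytau((\fra^n)^t)$, completing the proof.

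The only subtle point is the need to use \autoref{lem:UseEBigForClosure} in the hard direction, since the inclusion $\fra^{\lceil(nt+\epsilon')p^e\rceil}\subseteq (\fra^n)^{\lceil(t+\epsilon)p^e\rceil}$ can fail for small $e$; this is precisely what forces us to work with $\mytau$ rather than $\mytau^\sharp$ and justifies the remark after the question in the text that $\mytau$ is a better-behaved definition. No issue arises from the choice of compatible $p^e$-th roots because both definitions quantify over all of them.
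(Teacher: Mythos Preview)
Your proposal is correct and follows essentially the same approach as the paper: both directions compare $0^{\mystar(\fra^n)^{s}}$ with $0^{\mystar\fra^{ns'}}$ for nearby exponents via the elementary inclusions $(\fra^n)^{\lceil sp^e\rceil}\subseteq\fra^{\lceil nsp^e\rceil}$ and (for $e\gg 0$) $\fra^{\lceil ns'p^e\rceil}\subseteq(\fra^n)^{\lceil sp^e\rceil}$, invoking \autoref{lem:UseEBigForClosure} for the second. The only cosmetic difference is that the paper fixes one $\varepsilon$ computing both $\mytau$'s and uses the pair $(\varepsilon,\varepsilon/(2n))$, whereas you use the pair $(\epsilon,2n\epsilon)$ and pass to the limit; these are equivalent.
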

\begin{proof}
Fix an $\varepsilon > 0$ so that $\mytau((\fra^n)^{t}) = \mytau^\sharp((\fra^n)^{t + \varepsilon})$ and $\mytau(\fra^{nt}) = \mytau^\sharp(\fra^{nt + \varepsilon})$. By definition, any $\mytau^\sharp((\fra^n)^{t + \varepsilon'})$ where $\varepsilon'\leq\varepsilon$ also computes $\mytau((\fra^n)^{t})$.
We first show that $0^{\mystar (\fra^n)^{t+\varepsilon}}_{H_\m^d(A)} \supseteq 0^{\mystar\fra^{nt+\varepsilon}}_{H_\m^d(A)}$. Note that $0^{\mystar\fra^{nt+\varepsilon}}_{H_\m^d(A)}$ consists of $\eta \in H^d_{\fram}(A)$ whose images in $H_\m^d(A_\infty)$ are annihilated by $p^{1/p^\infty}f^{1/p^e}$ with $f\in\fra^{\lceil (nt + \varepsilon) p^e \rceil}$ and $f^{1/p^e}$ part of a compatible system of $p$-power roots of $f$, while $0^{\mystar (\fra^n)^{t+\varepsilon}}_{H_\m^d(A)}$ consists of $\eta$ that are annihilated by $p^{1/p^\infty}g^{1/p^e}$, where $g \in (\fra^n)^{\lceil (t+\varepsilon) p^e \rceil}$ and $g^{1/p^e}$ part of a compatible system of $p$-power roots of $g$. Since $\lceil (nt+\varepsilon)p^e\rceil \leq n\lceil (t+\varepsilon)p^e \rceil$, one sees that $(\fra^n)^{\lceil (t + \varepsilon) p^e \rceil} \subseteq \fra^{\lceil (nt + \varepsilon) p^e \rceil}$ and so
$0^{\mystar (\fra^n)^{t+\varepsilon}}_{H_\m^d(A)}\supseteq 0^{\mystar\fra^{nt+\varepsilon}}_{H_\m^d(A)}$. Thus $\mytau((\fra^n)^{t + \varepsilon}) \subseteq \mytau(\fra^{nt + \varepsilon})$.

Conversely, note that
\[
{\lceil (nt + \varepsilon) p^e \rceil} \geq n\lceil (t + \varepsilon/(2 n))p^e \rceil
\]
for $e \gg 0$.  Thus
\[
\fra^{\lceil (nt + \varepsilon) p^e \rceil} \subseteq \fra^{n\lceil (t + \varepsilon/(2 n))p^e \rceil}
\]
and so arguing as above, and using \autoref{lem:UseEBigForClosure}, we see that
\[
0^{\mystar\fra^{nt+\varepsilon}}_{H_\m^d(A)} \supseteq 0^{\mystar (\fra^{n})^{t+\varepsilon/(2n)}}_{H_\m^d(A)} = 0^{\mystar (\fra^n)^{t+\varepsilon}}_{H_\m^d(A)}
\]
where the last equality follows from our choice of $\varepsilon$.  This finishes the proof.
\end{proof}

We next prove the analog result for $\mytau([f_1,\dots,f_n]^t)=\mytau([\underline{f}]^t)$.


\begin{proposition}
\label{proposition--test ideals of powers-GensVersion}
Fix $\{\underline{f}\}=\{ f_1, \ldots, f_n\}$ a sequence of generators of an ideal $\fra \subseteq A$ and for each $f_i$ fix a compatible system of $p$-power roots of $f_i$ to define $\mytau^\sharp([\underline{f}]^t)$. Set $\underline{f}^{\bullet n}$ to be the set of degree $n$ monomials in the $f_i$, and we use the product of the fixed compatible system of $p$-power roots of $f_i$ to build compatible system of $p$-power roots for $\underline{f}^{\bullet n}$. Then for all positive integers $n$ and real numbers $t \geq 0$, $\mytau([\underline{f}^{\bullet n}]^{t})=\mytau([\underline{f}]^{nt})$.
\end{proposition}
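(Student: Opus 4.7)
The plan is to mimic the proof of \autoref{proposition--test ideals of powers}, but work directly with the specified compatible systems of $p$-power roots. Fix $\varepsilon > 0$ sufficiently small so that $\mytau([\underline{f}^{\bullet n}]^t) = \mytau^\sharp([\underline{f}^{\bullet n}]^{t+\varepsilon'})$ and $\mytau([\underline{f}]^{nt}) = \mytau^\sharp([\underline{f}]^{nt+\varepsilon'})$ for every $0 < \varepsilon' \leq \varepsilon$. I will establish the equality by comparing the corresponding $0^{\mystar}$ submodules of $H^d_\m(A)$.

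For the inclusion $\mytau([\underline{f}^{\bullet n}]^t) \subseteq \mytau([\underline{f}]^{nt})$, I will show $0^{\mystar [\underline{f}^{\bullet n}]^{t+\varepsilon}}_{H_\m^d(A)} \supseteq 0^{\mystar [\underline{f}]^{nt+\varepsilon}}_{H_\m^d(A)}$. Take $\eta$ in the right-hand side, and any $h = \prod_{k=1}^b (M_{j_k})^{1/p^e}$ with $b \geq (t+\varepsilon)p^e$ and each $M_{j_k}$ a degree-$n$ monomial in the $f_i$. By the construction of the compatible system for $\underline{f}^{\bullet n}$, each $(M_{j_k})^{1/p^e}$ is literally a product of $n$ of the chosen $f_i^{1/p^e}$, so $h$ is a product of $nb$ of the $f_i^{1/p^e}$. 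Since $nb \geq n(t+\varepsilon)p^e \geq (nt+\varepsilon)p^e$, the hypothesis on $\eta$ yields $p^{1/p^\infty} h \eta = 0$.

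For the reverse inclusion $\mytau([\underline{f}]^{nt}) \subseteq \mytau([\underline{f}^{\bullet n}]^t)$, I first observe that the analog of \autoref{lem:UseEBigForClosure} holds for $0^{\mystar [\underline{f}]^s}_{H_\m^d(A)}$ by a virtually identical argument (using that $(f_i^{1/p^{e+e_0}})^{p^{e_0}} = f_i^{1/p^e}$ for the fixed compatible system). I then claim that
\[
0^{\mystar [\underline{f}]^{nt+\varepsilon}}_{H_\m^d(A)} \supseteq 0^{\mystar [\underline{f}^{\bullet n}]^{t+\varepsilon/(2n)}}_{H_\m^d(A)},
\]
which combined with our choice of $\varepsilon$ finishes the proof. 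Indeed, take $\eta$ in the right-hand side and fix $g = \prod_{i=1}^a f_{l_i}^{1/p^e}$ with $a \geq (nt+\varepsilon)p^e$. Write $a = nb + r$ with $0 \leq r < n$, and group the first $nb$ factors of $g$ into $b$ blocks of $n$; each block has product of the form $(M_{j_k})^{1/p^e}$ for some degree-$n$ monomial $M_{j_k}$, and by our choice of compatible system this is exactly the distinguished $p^e$-th root of $M_{j_k}$. Let $h$ be the product of these $b$ terms and $g' \in A_\infty$ the product of the remaining $r<n$ factors, so $g = h \cdot g'$. For $e$ large enough, $b \geq (a-n)/n \geq (t+\varepsilon/n)p^e - 1 \geq (t+\varepsilon/(2n))p^e$, so the hypothesis on $\eta$ gives $p^{1/p^\infty} h \eta = 0$ in $H^d_\m(A_\infty)$; multiplying by $g' \in A_\infty$ then yields $p^{1/p^\infty} g \eta = 0$.

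The only subtle point is the reverse direction, where $a$ need not be divisible by $n$: I handle this by grouping $nb$ factors into $b$ degree-$n$ monomial $p^e$-th roots while carrying along a bounded residue $g' \in A_\infty$, and absorbing the rounding loss into the slack between $\varepsilon/n$ and $\varepsilon/(2n)$. This requires restricting to $e \gg 0$, which is legitimate by the analog of \autoref{lem:UseEBigForClosure} noted above.
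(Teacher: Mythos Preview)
Your proof is correct and follows essentially the same approach as the paper's own proof: both directions use the same counting arguments, and for the harder inclusion you write $a = nb + r$ with $0 \le r < n$, group $nb$ of the $f_{l_i}^{1/p^e}$ into $b$ degree-$n$ monomial roots, and absorb the leftover into the $\varepsilon$-slack using \autoref{lem:UseEBigForClosure}. The paper's proof is slightly terser (it does not explicitly name the leftover factor $g'$), but the argument is the same.
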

\begin{proof}
Fix an $\varepsilon > 0$ so that $\mytau([\underline{f}^{\bullet n}]^t) = \mytau^\sharp([\underline{f}^{\bullet n}]^{t + \varepsilon})$ and $\mytau([\underline{f}]^{nt}) = \mytau^\sharp([\underline{f}]^{nt + \varepsilon})$. By definition, any $\mytau^\sharp([\underline{f}^{\bullet n}]^{t + \varepsilon'})$ where $\varepsilon'\leq\varepsilon$ also computes $\mytau([\underline{f}^{\bullet n}]^{t})$. As in the proof of \autoref{proposition--test ideals of powers}, the containment $0^{\mystar [\underline{f}^{\bullet n}]^{t+\varepsilon}}_{H_\m^d(A)}\supseteq 0^{\mystar[\underline{f}]^{nt+\varepsilon}}_{H_\m^d(A)}$ follows from the fact that $\lceil (nt+\varepsilon)p^e\rceil \leq n\lceil (t+\varepsilon)p^e \rceil$.

Next we show $0^{\mystar [\underline{f}^{\bullet n}]^{t+\varepsilon}}_{H_\m^d(A)}\subseteq 0^{\mystar[\underline{f}]^{nt+\varepsilon}}_{H_\m^d(A)}$. If $a/p^e\geq nt+\varepsilon$, then we can write $a=bn+c$ such that $b=\lfloor a/n\rfloor$ and $0\leq c\leq n-1$. Pick $e\gg0$ such that $c/p^e\leq \varepsilon/2$, since $\frac{bn+c}{p^e}\geq nt+\varepsilon$, we must have $bn/p^e\geq nt+\varepsilon/2$ and thus $b/p^e\geq t+\varepsilon/2n$. Therefore if $\eta$ is annihilated by $p^{1/p^\infty}g$ in $H_\m^d(A_\infty)$ for all $g=\prod^bg_k^{1/p^e}$ with $g_k^{1/p^e}=\prod^nf_j^{1/p^e}$ and $b/p^e\geq t+\varepsilon/2n$, then it is annihilated by $p^{1/p^\infty}f$ with $f=\prod^af_j^{1/p^e}$ and $a/p^e\geq nt+\varepsilon$. This proves that
\[
0^{\mystar[\underline{f}]^{nt+\varepsilon}}_{H_\m^d(A)} \supseteq 0^{\mystar [\underline{f}^{\bullet n}]^{t+\varepsilon/(2n)}}_{H_\m^d(A)} = 0^{\mystar [\underline{f}^{\bullet n}]^{t+\varepsilon}}_{H_\m^d(A)}
\]
where again the last equality follows from our choice of $\varepsilon$. This finishes the proof.
\end{proof}

Our next goal is to show that $\fra\subseteq\mytau([f_1,\dots,f_n])$ for any set of generators $\{f_1,\dots,f_n\}$ of $\fra$ (and any fixed set of compatible system of $p$-power roots $\{f_i^{1/p^e}\}_{e=1}^\infty$ in order to define $\mytau([f_1,\dots,f_n])$). It would follow that $\fra\subseteq\mytau(\fra)$ by (\ref{equation--easy containment}). 

\begin{proposition}
\label{proposition--test ideal contains the original ideal}
Fix $\{\underline{f}\}=\{ f_1, \ldots, f_n\}$ a set of elements of $A$, and for each $f_i$ fix a compatible system of $p$-power roots of $f_i$ in $A_\infty$ to define $\mytau([\underline{f}])$. Then we have $(f_1, \dots, f_n) \subseteq \mytau([\underline{f}])=\mytau([\underline{f}]^1)$.  In particular, $\fra \subseteq \mytau(\fra)$ for any ideal $\fra \subseteq A$.
\end{proposition}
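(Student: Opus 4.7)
My plan is to reduce the claim to showing that each $f_j$ annihilates $0^{\mystar[\underline{f}]^{1+\varepsilon}}_{H_\m^d(A)}$ for one specific small $\varepsilon > 0$. Indeed, since $\mytau([\underline{f}]^1) = \bigcup_{\varepsilon > 0}\mytau^\sharp([\underline{f}]^{1+\varepsilon})$, it is enough to exhibit a single $\varepsilon > 0$ with $f_j \cdot 0^{\mystar[\underline{f}]^{1+\varepsilon}}_{H_\m^d(A)} = 0$ for each $j$; the containment $\fra \subseteq \mytau(\fra)$ then follows from (\ref{equation--easy containment}).

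The second ingredient is a quantitative version of \autoref{lem.AlmostKernel=0}: rerunning its proof for the finite range $e = 1,\ldots,E$ instead of all $e$, the essential-extension and socle argument produces $pc \in (x_1,\dots,x_d)^{p^E} A = \m^{p^E} A$ (rather than $pc = 0$). Contrapositively, if $c \in A$ is nonzero with a fixed compatible system of $p$-power roots and $pc \notin \m^{p^E}A$, then any $\eta' \in H_\m^d(A)$ with $p^{1/p^\infty} c^{1/p^e} \eta' = 0$ in $H_\m^d(A_\infty)$ for all $1 \le e \le E$ must already vanish in $H_\m^d(A)$.

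Now I would choose $E$ large enough that $p^E$ exceeds $\ord_\m(p f_j)$ for every $j$; this is possible because $A$ is a Noetherian regular local domain (so each $p f_j$ is nonzero with finite $\m$-adic order) and there are only finitely many generators. Set $\varepsilon := 1/p^E$, so that $\lceil \varepsilon p^e\rceil = 1$ for $1 \le e \le E$, equivalently $\lceil (1+\varepsilon) p^e\rceil = p^e + 1$. Given $\eta \in 0^{\mystar[\underline{f}]^{1+\varepsilon}}_{H_\m^d(A)}$ and each $e$ in this range, applying the defining condition to the admissible product
\[
g = (f_j^{1/p^e})^{p^e + 1} = f_j \cdot f_j^{1/p^e}
\]
gives $p^{1/p^\infty} f_j^{1/p^e}(f_j \eta) = 0$ in $H_\m^d(A_\infty)$. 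Since $p f_j \notin \m^{p^E}$ by our choice of $E$, applying the quantitative version with $c = f_j$ and $\eta' = f_j \eta$ forces $f_j \eta = 0$ in $H_\m^d(A)$, completing the proof.

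The main obstacle is precisely the extraction of the quantitative statement. The original \autoref{lem.AlmostKernel=0} crucially uses \emph{all} $e \ge 1$ to conclude $pc \in \bigcap_e \m^{p^e} = 0$, while the condition of $\eta \in 0^{\mystar[\underline{f}]^{1+\varepsilon}}$ only provides the annihilation $p^{1/p^\infty} f_j^{1/p^e}(f_j\eta) = 0$ for $e \le \log_p(1/\varepsilon)$ — a finite range. Choosing $\varepsilon$ small enough that $p^E = p^{\log_p(1/\varepsilon)}$ dominates the (finite) $\m$-adic orders of the $pf_j$ is the mechanism that compensates for having only finitely many $e$'s at our disposal, and is exactly where the freedom to pick $\varepsilon > 0$ (as opposed to the more rigid $\mytau^\sharp$ at $t=1$) is essential.
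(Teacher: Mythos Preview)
Your proof is correct, but it takes a different route from the paper's.

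The paper's argument is shorter because it exploits the Artinian stabilization hidden in the phrase ``$0<\varepsilon\ll 1$'': since $H_\m^d(A)$ is Artinian, the submodules $0^{\mystar[\underline{f}]^{1+\varepsilon}}_{H_\m^d(A)}$ are all equal for $\varepsilon$ below some $\varepsilon_0$. Hence any $\eta$ in the stable module lies in $0^{\mystar[\underline{f}]^{1+1/p^e}}_{H_\m^d(A)}$ for \emph{every} $e\gg 0$, and taking $g=(f_i^{1/p^e})^{p^e+1}$ at that level gives $p^{1/p^\infty}f_i^{1/p^e}(f_i\eta)=0$ for all $e\gg 0$. One can then invoke \autoref{lem.AlmostKernel=0} directly with $c=pf_i$ (its proof only uses $e\gg 0$), concluding $f_i\eta=0$. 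So the ``finite range'' obstacle you identify disappears once one lets $\varepsilon$ vary through $1/p^e$ rather than fixing it.

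Your approach instead pins down a single $\varepsilon=1/p^E$ and compensates by proving a quantitative refinement of \autoref{lem.AlmostKernel=0}: from $p^{1/p^\infty}c^{1/p^e}\eta'=0$ for $e\le E$ one deduces $pc\in\m^{p^E}$ when $\eta'\neq 0$, and choosing $E$ larger than the $\m$-adic orders of the $pf_j$ forces $f_j\eta=0$. This is a genuinely different mechanism: the paper uses Noetherianity of $A$ (via stabilization) to manufacture infinitely many $e$'s, whereas you use the explicit $\m$-adic order bound to make finitely many $e$'s suffice. Your version is more effective and avoids appealing to stabilization, at the cost of re-opening the proof of \autoref{lem.AlmostKernel=0}; the paper's version is slicker but leaves the key stabilization step implicit.
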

\begin{proof}
It is enough to show that $(f_1,\dots,f_n)$ annihilates $0^{\mystar [\underline{f}]^{1+\varepsilon}}_{H_\m^d(A)}$ for $0 < \varepsilon \ll 1$. Fix an $\eta \in 0^{\mystar [\underline{f}]^{1+\varepsilon}}_{H_\m^d(A)}$.  
Our hypothesis implies that $(p f_i)^{1/p^\infty}(f_i\eta)=0$ in $H_\m^d(A_\infty)$. Applying \autoref{lem.AlmostKernel=0} to $c=pf_i$, we have $f_i\eta=0$ in $H_\m^d(A)$, i.e., $f_i$ annihilates $\eta$.
\end{proof}

\begin{corollary}
\label{cor.TestIdealOfPrincipal}
For $0 \neq f \in A$, we have $\mytau([f])=\mytau^\sharp([f])= (f)$.
\end{corollary}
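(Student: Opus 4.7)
The plan is to combine \autoref{proposition--test ideal contains the original ideal} with the containment chain (\ref{equation--easy containment}) for one direction, and then to prove the reverse containment $\mytau^\sharp([f]) \subseteq (f)$ by identifying a small, easy-to-analyze subset of $0^{\mystar [f]^1}_{H_\m^d(A)}$ and invoking Matlis duality.

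First I would note that \autoref{proposition--test ideal contains the original ideal} applied to the singleton $\{f\}$ gives $(f) \subseteq \mytau([f])$, which combined with (\ref{equation--easy containment}) yields $(f) \subseteq \mytau([f]) \subseteq \mytau^\sharp([f])$. So everything will follow once I show $\mytau^\sharp([f]) \subseteq (f)$.

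For the reverse containment, the key step is the inclusion
\[
\{\eta \in H_\m^d(A) \;|\; f\eta = 0 \text{ in } H_\m^d(A)\} \;\subseteq\; 0^{\mystar [f]^1}_{H_\m^d(A)}.
\]
To verify this I would take $\eta$ with $f\eta = 0$ in $H_\m^d(A)$, so its image under $H_\m^d(A) \to H_\m^d(A_\infty)$ is also annihilated by $f$. For any $e > 0$ and any integer $a \geq p^e$, the identity $(f^{1/p^e})^{p^e} = f$ in $A_\infty$ lets me factor $(f^{1/p^e})^a = (f^{1/p^e})^{a-p^e}\cdot f$, whence $(f^{1/p^e})^a \eta = 0$ in $H_\m^d(A_\infty)$, and in particular $p^{1/p^\infty}(f^{1/p^e})^a\eta = 0$. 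This is exactly the defining condition of $0^{\mystar [f]^1}_{H_\m^d(A)}$, so no recourse to \autoref{lem.AlmostKernel=0} is needed here.

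Taking annihilators and applying Matlis duality finishes things. Since $A$ is a complete regular (hence Gorenstein) local ring of dimension $d$, $H_\m^d(A)$ is the injective hull of the residue field, and $0 :_{H_\m^d(A)} f = \Hom_A(A/(f), H_\m^d(A))$ is the Matlis dual of the finitely generated module $A/(f)$. Matlis duality over the complete local ring $A$ gives
\[
\Ann_A\bigl(0 :_{H_\m^d(A)} f\bigr) = \Ann_A(A/(f)) = (f),
\]
so
\[
\mytau^\sharp([f]) = \Ann_A\bigl(0^{\mystar [f]^1}_{H_\m^d(A)}\bigr) \subseteq \Ann_A\bigl(0 :_{H_\m^d(A)} f\bigr) = (f),
\]
which together with the earlier chain gives $(f) = \mytau([f]) = \mytau^\sharp([f])$. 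The only subtle point is handling fractional exponents $f^{a/p^e}$ together with the $p^{1/p^\infty}$-weakening simultaneously, and this is neutralized by the simple factorization above, which extracts an honest factor of $f$ whenever $a \geq p^e$.
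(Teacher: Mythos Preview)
Your proof is correct and follows essentially the same approach as the paper: establish $(f)\subseteq\mytau([f])\subseteq\mytau^\sharp([f])$ via \autoref{proposition--test ideal contains the original ideal} and (\ref{equation--easy containment}), then show $\{\eta: f\eta=0\}\subseteq 0^{\mystar [f]^1}_{H_\m^d(A)}$ and conclude by Matlis duality. Your explicit factorization $(f^{1/p^e})^a=(f^{1/p^e})^{a-p^e}\cdot f$ for $a\geq p^e$ is in fact a bit more careful than the paper's terse justification of the same inclusion.
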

\begin{proof}
We know $(f)\subseteq \mytau([f])\subseteq\mytau^\sharp([f])$ by \autoref{proposition--test ideal contains the original ideal} and \autoref{equation--easy containment}, thus it suffices to show that $\mytau^\sharp([f])=\mytau^\sharp([f]^1) \subseteq (f)$.  But if $f\eta=0$, then $p^{1/p^\infty}f\eta=0$ in $H_\m^d(A_\infty)$ and so
$\{ \eta \in {H^d_{\fram}(A)}\;|\;  f \eta = 0 \} \subseteq  0^{\mystar [f]^1}_{H_\m^d(A)}$,
therefore the result follows by applying Matlis duality.  
\end{proof}

\begin{corollary}
\label{cor.TauEpsilonIsA}
Fix $\{\underline{f}\}=\{ f_1, \ldots, f_n\}$ a sequence of generators of an ideal $\fra \subseteq A$, and for each $f_i$ fix a compatible system of $p$-power roots of $f_i$ to define $\mytau([\underline{f}])$. Then we have $\mytau([\underline{f}]^0)=\mytau(\fra^0)=A$.
\end{corollary}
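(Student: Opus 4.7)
The plan is to reduce to showing $\mytau([\underline{f}]^0) = A$, since (\ref{equation--easy containment}) gives $\mytau([\underline{f}]^0) \subseteq \mytau(\fra^0) \subseteq A$. Unpacking the definition of $\mytau$, this amounts to showing $\mytau^\sharp([\underline{f}]^\varepsilon) = A$ for all sufficiently small $\varepsilon > 0$, or equivalently $0^{\mystar [\underline{f}]^\varepsilon}_{H_\m^d(A)} = 0$ for $\varepsilon$ small enough.

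First I would observe that since $H_\m^d(A) \cong E_A(A/\m)$ is Artinian, the family $\{0^{\mystar [\underline{f}]^\varepsilon}_{H_\m^d(A)}\}_{\varepsilon > 0}$, which is a decreasing chain of submodules as $\varepsilon$ decreases toward $0$, must stabilize. Its stable value coincides with the intersection $\bigcap_{\varepsilon > 0} 0^{\mystar [\underline{f}]^\varepsilon}_{H_\m^d(A)}$, so it suffices to show this intersection is zero.

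For the main step, suppose $\eta$ lies in this intersection; the key maneuver is a diagonalization using different $\varepsilon$ at different levels $e$. Fix any nonzero $f_j$ (such a $j$ exists unless $\fra = 0$, in which case the statement is immediate). For each integer $e \geq 1$, pick $\varepsilon_e \in (0, p^{-e}]$; then $a = 1 \geq \lceil \varepsilon_e p^e \rceil$, so $g = f_j^{1/p^e}$ is an admissible choice in the defining condition of $0^{\mystar [\underline{f}]^{\varepsilon_e}}_{H_\m^d(A)}$. This forces $p^{1/p^\infty} f_j^{1/p^e}\eta = 0$ in $H_\m^d(A_\infty)$; extracting the $p^{1/p^e}$ component gives $p^{1/p^e} f_j^{1/p^e}\eta = (pf_j)^{1/p^e}\eta = 0$, where $\{p^{1/p^e} f_j^{1/p^e}\}_{e \geq 1}$ is a compatible system of $p$-power roots of $pf_j$ in $A_\infty$. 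Applying \autoref{lem.AlmostKernel=0} with $c = pf_j$ yields $\eta = 0$, as desired.

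The main obstacle is that, for any \emph{single} fixed $\varepsilon > 0$, the choice $g = f_j^{1/p^e}$ with $a = 1$ is admissible only when $e \leq \log_p(1/\varepsilon)$, so one cannot directly derive the condition $(pf_j)^{1/p^e}\eta = 0$ for \emph{all} $e$ from the hypotheses at a single $\varepsilon$. Passing to the intersection over all $\varepsilon > 0$ circumvents this by letting us adapt $\varepsilon_e$ to each $e$, and then the Artinian property of $H_\m^d(A)$ upgrades the intersectional vanishing into actual stabilization of $0^{\mystar [\underline{f}]^\varepsilon}_{H_\m^d(A)}$ to $0$ for some small $\varepsilon > 0$.
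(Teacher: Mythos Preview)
Your proof is correct and follows essentially the same approach as the paper's: reduce via \eqref{equation--easy containment} to showing $0^{\mystar [\underline{f}]^\varepsilon}_{H_\m^d(A)} = 0$ for $\varepsilon \ll 1$, then for $\eta$ in this module use the level-$e$ condition with $a=1$ to get $(pf_j)^{1/p^e}\eta = 0$ for all $e$ and apply \autoref{lem.AlmostKernel=0} with $c = pf_j$. One small caveat: the case $\fra = 0$ is not ``immediate''---there every $f_j^{1/p^e} = 0$ (as $A_\infty$ is reduced), so every $g = 0$, the closure is all of $H^d_\m(A)$, and the corollary actually fails in that degenerate case; the paper's proof likewise tacitly assumes some $f_i \neq 0$.
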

\begin{proof}
Since $\mytau([\underline{f}]^0)\subseteq \mytau(\fra^0)\subseteq A$, it suffices to prove that $\mytau([\underline{f}]^0)=A$, that is, $0^{\mystar [\underline{f}]^\epsilon}_{H_\m^d(A)}=0$ when $\epsilon\ll1$. Suppose that $\eta\in 0^{\mystar [\underline{f}]^\epsilon}_{H_\m^d(A)}=0^{\mystar [\underline{f}]^{1/p^e}}_{H_\m^d(A)}$ for all $e\gg0$, then we have $p^{1/p^e}f_i^{1/p^e}\eta=0$ in $H_\m^d(A_\infty)$ for all $e\gg0$ and all $i$. Applying \autoref{lem.AlmostKernel=0} to $c=pf_i$, we have $\eta=0$.
\end{proof}

\section{The subadditivity theorem}

The goal in this section is to prove the subadditivity for $\mytau([\underline{f}]^t)$. This is in analogous to \cite{DemaillyEinLazSubadditivity} and \cite{HaraYoshidaGeneralizationOfTightClosure}. We do not know how to prove the subadditivity property for $\mytau(\fra^t)$. This is the main reason that we need to work with $\mytau([\underline{f}]^t)$ in our later applications. We start by introducing the mixed perfectoid test ideals.

\begin{definition}
\label{definition--mixed mixed}
Let $\{f_1,\dots,f_n\}$ and $\{g_1,\dots,g_m\}$ be fixed sets of elements of $A$. We also fix a compatible system of $p$-power roots $\{f_i^{1/p^e}\}_{e=1}^\infty$, $\{g_j^{1/p^e}\}_{e=1}^\infty$ for all $f_i$ and $g_j$ in $A_\infty$.  Let $t, s \geq 0$ be two real numbers.
\[
{\def\arraystretch{1.4}
\begin{array}{rccl}
0^{\mystar [\underline{f}]^t[\underline{g}]^s}_{H_\m^d(A)} &  = &
\big\{\eta\in H_\m^d(A) \hspace{0.5em} \;| & \hspace{0.5em} \forall e>0, p^{1/p^\infty} fg \cdot \eta=0 \text{ in } H_\m^d(A_\infty) \\
& & & \text{ for all $f = \prod_{i=1}^af_{j_i}^{1/p^e}$ and all $g =\prod_{i=1}^bg_{k_i}^{1/p^e}$,}\\\vspace{6pt}
& & & \text{ where $a\geq tp^e$ and $b\geq sp^e$} \big\},
\end{array}
}
 \]
We define $\mytau^\sharp([\underline{f}]^t[\underline{g}]^s)=\Ann_A0^{\mystar [\underline{f}]^t[\underline{g}]^s}_{H_\m^d(A)}$, and $\mytau([\underline{f}]^t[\underline{g}]^s)=\mytau^\sharp([\underline{f}]^{t+\epsilon}[\underline{g}]^{s+\epsilon})$ for $\epsilon\ll 1$.
\end{definition}

\begin{remark}
\label{remark--mixed mixed unambiguity}
For $\epsilon\ll1$, we have $\mytau([\underline{f}]^t[\underline{f}]^s)=\mytau^\sharp([\underline{f}]^{t+\epsilon}[\underline{f}]^{s+\epsilon})
=\mytau^\sharp([\underline{f}]^{t+s+\epsilon})=\mytau([\underline{f}]^{t+s})$ (compare with the proof of \autoref{proposition--test ideals of powers-GensVersion}).
\end{remark}

Before we prove our subadditivity theorem, we recall some notations which appear frequently in the study of Artinian modules in commutative algebra, as well as facts about $H^d_{\fram}(R)$.  These are well known to experts in commutative algebra, but we do not know of a good reference.


\begin{remark}[Annihilators of submodules of $H^d_{\fram}(A)$]
We will show, in our setting, that if $M \subseteq H^d_{\fram}(A)$ is a submodule and $J = \Ann_A M$, then
\begin{equation}
\label{eq.AnnihilatorOfLocalCMSubmodule}
M = \Ann_{H^d_{\fram}(A)}J := \{ \eta \in H^d_{\fram}(R)\;|\; J \eta = 0 \} = \text{the $J$-torsion of $H^d_{\fram}(A)$.}
\end{equation}
Note here (and in the proof below) we slightly abuse the notation of annihilators to select the $J$-torsion of a module.  We hope this will not cause substantial confusion.

Now we verify \eqref{eq.AnnihilatorOfLocalCMSubmodule}.  Recall that because $A$ is regular $H^d_{\fram}(A)$ is isomorphic to $E$, the injective hull of the residue field.  Next observe that $E$ is Artinian (as are all its submodules).  Because $A$ is complete and so isomorphic to the Matlis dual of $E$, the submodules $M$ of $E$ are Matlis dual to the quotients of $A$.  Now, the annihilator $J$ of $M$ is equal to the annihilator of its Matlis dual, and hence the Matlis dual of $M$ is $A/J$.  On the other hand, $M = \Hom_A(A/J, E)$ is the submodule of $E$ that $J$ annihilates.
\end{remark}

We are ready to prove our subadditivity theorem. Our proof is inspired from the proof of subadditivity for test ideals in characteristic $p>0$ given by S.~Takagi from \cite[Theorem 2.4]{TakagiFormulasForMultiplierIdeals}. The essential reason that the theorem holds is because $A_\infty$ is almost flat over $A$ by \autoref{thm.ExistenceofAinfty}.

\begin{theorem}[Subadditivity]
\label{theorem--subadditivity}
With notation as in \autoref{definition--mixed mixed}, we have $\mytau^\sharp([\underline{f}]^t[\underline{g}]^s)\subseteq\mytau^\sharp([\underline{f}]^t)\mytau^\sharp([\underline{g}]^s)$ so also $\mytau([\underline{f}]^t[\underline{g}]^s)\subseteq\mytau([\underline{f}]^t)\mytau([\underline{g}]^s)$. In particular, \begin{equation}
\label{equation--subadditivity}
\mytau([\underline{f}^{\bullet n}]^t)=\mytau([\underline{f}]^{tn})  \subseteq \mytau([\underline{f}]^t)^n
\end{equation}
for all $t\in\mathbb{R}_{\geq0}$ and all $n\in \mathbb{N}$, where we define $\mytau([\underline{f}^{\bullet n}]^t)$ as in \autoref{proposition--test ideals of powers-GensVersion}.
\end{theorem}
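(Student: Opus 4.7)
The plan is to Matlis-dualize the desired inclusion and then translate $A_\infty$-computations into $A$-module-theoretic ones via almost flatness of $A\to A_\infty$, in the spirit of Takagi's subadditivity argument for test ideals in characteristic $p$. Set $J=\mytau^\sharp([\underline{f}]^t)$, $K=\mytau^\sharp([\underline{g}]^s)$, and write $M_t$, $M_s$, $M_{t,s}$ for the submodules $0^{\mystar [\underline f]^t}_{H_\m^d(A)}$, $0^{\mystar [\underline g]^s}_{H_\m^d(A)}$, $0^{\mystar [\underline f]^t[\underline g]^s}_{H_\m^d(A)}$ of $H^d_\m(A)$, whose $A$-annihilators are $J$, $K$, and $\mytau^\sharp([\underline f]^t[\underline g]^s)$. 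By the remark preceding the theorem, each of these submodules coincides with the full $J$-, $K$-, or $JK$-torsion of $H^d_\m(A)$. Thus $\mytau^\sharp([\underline f]^t[\underline g]^s)\subseteq JK$ is equivalent to $\Ann_{H^d_\m(A)}(JK)\subseteq M_{t,s}$, so I fix $\eta\in H^d_\m(A)$ with $JK\eta=0$ and, for arbitrary admissible $f=\prod f_{j_i}^{1/p^e}$, $g=\prod g_{k_i}^{1/p^e}$ with $a\geq tp^e$, $b\geq sp^e$, aim to verify $p^{1/p^\infty}fg\eta=0$ in $H^d_\m(A_\infty)$.

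\textbf{Heart of the argument.} Because $J(K\eta)=0$ and $M_s$ is the full $K$-torsion of $H^d_\m(A)$, we obtain $J\eta\subseteq M_s$. For each $j\in J$ the definition of $M_s$ then yields $p^{1/p^\infty}g(j\eta)=0$ in $H^d_\m(A_\infty)$, equivalently $j\cdot (p^{1/p^\infty}g\eta)=0$. Hence $p^{1/p^\infty}g\eta$ lies in the $J$-torsion of $H^d_\m(A_\infty)\cong A_\infty\otimes_A H^d_\m(A)$. Applying \autoref{cor.PropertiesofAinfty}(2) with $N=A/J$ and $M=H^d_\m(A)$ produces the almost isomorphism
\[
A_\infty\otimes_A M_t\;\xrightarrow{\ \text{a}\ }\; \Ann_{H^d_\m(A_\infty)}J,\qquad \mu\otimes\alpha\mapsto \mu\alpha.
\]
Almost-surjectivity lets me write, for each $k>0$, $p^{1/p^k}\cdot (p^{1/p^\infty}g\eta)=\sum_i\mu_{i,k}\alpha_{i,k}$ in $H^d_\m(A_\infty)$ for some $\mu_{i,k}\in A_\infty$, $\alpha_{i,k}\in M_t$. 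Multiplying by $f$ and invoking $p^{1/p^\infty}f\alpha_{i,k}=0$ (the defining property of $M_t$), one obtains $p^{1/p^\infty}f\cdot p^{1/p^k}p^{1/p^\infty}g\eta=0$ for every $k$. Since the almost mathematics ideal $(p^{1/p^\infty})$ is idempotent, this collapses to $p^{1/p^\infty}fg\eta=0$, which is exactly what was required.

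\textbf{Consequences and main difficulty.} A small positive perturbation of $t$ and $s$ then transfers the inclusion to $\mytau([\underline f]^t[\underline g]^s)\subseteq\mytau([\underline f]^t)\mytau([\underline g]^s)$. The displayed chain $\mytau([\underline f^{\bullet n}]^t)=\mytau([\underline f]^{nt})\subseteq\mytau([\underline f]^t)^n$ then follows by combining \autoref{proposition--test ideals of powers-GensVersion} with \autoref{remark--mixed mixed unambiguity} -- which identifies $\mytau([\underline f]^{nt})$ with $\mytau([\underline f]^t[\underline f]^{(n-1)t})$ -- and a straightforward induction on $n$ applying the two-index subadditivity just proved. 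The main difficulty is the careful bookkeeping of the various $p^{1/p^\infty}$ factors: the almost isomorphism of \autoref{cor.PropertiesofAinfty}(2) forces an extra $p^{1/p^\infty}$ when one lifts a $J$-torsion element of $H^d_\m(A_\infty)$ back into $A_\infty\otimes_A M_t$, and the entire argument goes through only because these redundant factors can be absorbed by the $p^{1/p^\infty}$ that is already baked into the definitions of $M_t$, $M_s$, and $M_{t,s}$ -- a feature which ultimately reflects almost flatness of $A\to A_\infty$ from \autoref{thm.ExistenceofAinfty}.
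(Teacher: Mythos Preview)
Your proposal is correct and follows essentially the same route as the paper's proof: both reduce to showing $\Ann_{H^d_\m(A)}(JK)\subseteq 0^{\mystar [\underline f]^t[\underline g]^s}_{H^d_\m(A)}$ via the intermediate observation $J\eta\subseteq M_s$, then use \autoref{cor.PropertiesofAinfty}(2) to almost identify the $J$-torsion of $H^d_\m(A_\infty)$ with $A_\infty\otimes_A M_t$, write $p^{1/p^k}g\eta$ as a finite $A_\infty$-combination of elements of $M_t$, multiply by $p^{1/p^{k'}}f$, and absorb the extra powers of $p^{1/p^k}$ into the almost ideal; the deduction of \eqref{equation--subadditivity} via \autoref{proposition--test ideals of powers-GensVersion}, \autoref{remark--mixed mixed unambiguity}, and induction is also identical. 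The paper likewise credits Takagi's characteristic $p$ argument as the inspiration.
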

\begin{proof}
We first claim that it is enough to show that
\begin{equation}
\label{equation--inclusion inside the colon}
0^{\mystar [\underline{f}]^t[\underline{g}]^s}_{H_\m^d(A)}\supseteq
\{ \eta \in H_\m^d(A) \;|\;  \mytau^\sharp([\underline{f}]^t) \eta \subseteq 0^{\mystar [\underline{g}]^s}_{H_\m^d(A)} \}.
\end{equation}
To see this claim, if $z\in\Ann_{H_\m^d(A)}\mytau^\sharp([\underline{f}]^t)\mytau^\sharp([\underline{g}]^s)$ then $\mytau^\sharp([\underline{f}]^t)z\subseteq \Ann_{H_\m^d(A)}\mytau^\sharp([\underline{g}]^s)=0^{\mystar [\underline{g}]^s}_{H_\m^d(A)}$ and thus 
$z\in \{ \eta \in H^d_{\fram}(A) \;|\; \mytau^\sharp([\underline{f}]^t) \eta \subseteq 0^{\mystar [\underline{g}]^s}_{H_\m^d(A)} \} \subseteq 0^{\mystar [\underline{f}]^t[\underline{g}]^s}_{H_\m^d(A)}$.
Therefore $$\Ann_{H_\m^d(A)}\mytau^\sharp([\underline{f}]^t)\mytau^\sharp([\underline{g}]^s)\subseteq 0^{\mystar [\underline{f}]^t[\underline{g}]^s}_{H_\m^d(A)}=\Ann_{H_\m^d(A)}\mytau^\sharp([\underline{f}]^t[\underline{g}]^s)$$ and thus $\mytau^\sharp([\underline{f}]^t[\underline{g}]^s)\subseteq\mytau^\sharp([\underline{f}]^t)\mytau^\sharp([\underline{f}]^s)$ as desired.

Next we prove \autoref{equation--inclusion inside the colon}. Suppose that $\eta\in {H_\m^d(A)}$ satisfies $\mytau^\sharp([\underline{f}]^t)\eta\ \subseteq 0^{\mystar [\underline{g}]^s}_{H_\m^d(A)}$. By definition we know that $p^{1/p^\infty}g\eta \cdot \mytau^\sharp([\underline{f}]^t)=0$ in $H_\m^d(A_\infty)$ for all $g =\prod_{i=1}^bg_{k_i}^{1/p^e}$ with $b\geq sp^e$. This means $p^{1/p^\infty}g \eta\in \Ann_{H_\m^d(A_\infty)}(\mytau^\sharp([\underline{f}]^t)A_\infty)$. Since $H_\m^d(A_\infty)\cong H_\m^d(A)\otimes A_\infty$ (which follows from the \v{C}ech complex description of local cohomology \cite[page 130]{BrunsHerzog} as $d=\dim A$), we know from \autoref{cor.PropertiesofAinfty} that $\Ann_{H_\m^d(A_\infty)}(\mytau^\sharp([\underline{f}]^t)A_\infty)=\Hom_{A_\infty}(A_\infty/\mytau^\sharp([\underline{f}]^t)A_\infty, H_\m^d(A_\infty))$ is almost isomorphic to
\[
A_\infty\otimes \Hom_A(A/\mytau^\sharp([\underline{f}]^t), H_\m^d(A))=A_\infty\otimes\Ann_{H_\m^d(A)}\mytau^\sharp([\underline{f}]^t)=A_\infty\otimes 0^{\mystar [\underline{f}]^t}_{H_\m^d(A)}.
\]
Therefore $p^{1/p^\infty}g \eta \in A_\infty\otimes 0^{\mystar[\underline{f}]^t}_{H_\m^d(A)}$, which means for every $k$ we can write
\[
p^{1/p^k}g\eta =a_1\eta_1+\cdots+a_l\eta_l
\]
where $\eta_i\in 0^{\mystar [\underline{f}]^t}_{H_\m^d(A)}$ and $a_i\in A_\infty$. So for all $f=\prod_{i=1}^af_{j_i}^{1/p^e}$ with $a\geq tp^e$,
\[
p^{1/p^{k'}}p^{1/p^k}fg \cdot\eta =a_1(p^{1/p^{k'}}f\eta_1)+\cdots+a_l(p^{1/p^{k'}}f\eta_l)=0.
\]
for all $k$, $k'$. Thus we know $p^{1/p^\infty}fg\cdot \eta=0$ for all $f=\prod_{i=1}^af_{j_i}^{1/p^e}$ and all $g =\prod_{i=1}^bg_{k_i}^{1/p^e}$ such that $a\geq tp^e$ and $b\geq sp^e$. Hence $\eta\in0^{\mystar [\underline{f}]^t[\underline{g}]^s}_{H_\m^d(A)} $ as desired.

Finally, \autoref{equation--subadditivity} follows from \autoref{proposition--test ideals of powers-GensVersion}, \autoref{remark--mixed mixed unambiguity} and the inclusion $\mytau^\sharp([\underline{f}]^t[\underline{g}]^s)\subseteq\mytau^\sharp([\underline{f}]^t)\mytau^\sharp([\underline{g}]^s)$ we just proved applied to $\underline{f}=\underline{g}$, and $t, s$ both equal to $t+\epsilon$ for $\epsilon\ll1$, plus an induction on $n$.
\end{proof}


We could also define the mixed characteristic perfectoid test ideal for $\mytau^\sharp(\fra^t)$ in an analogous way:
\[
\begin{array}{rcl}
0^{\mystar \fra^t\frb^s}_{H_\m^d(A)} =
\big\{\eta\in H_\m^d(A) & |& p^{1/p^\infty} f^{1/p^e}g^{1/p^e} \eta=0 \text{ in } H_\m^d(A_\infty) \\
& & \text{ for all $f \in \fra^{\lceil t p^e \rceil}$ and all $g \in \frb^{\lceil sp^e \rceil}$} \big\},
\end{array}
 \]
where $f^{1/^e}$ and $g^{1/p^e}$ denote all possible parts of a compatible system of $p$-power roots of $f$ and $g$ respectively. We then define $\mytau^\sharp(\fra^t\frb^s)=\Ann_A0^{\mystar \fra^t\frb^s}_{H_\m^d(A)}$ and $\mytau(\fra^t\frb^s)=\mytau^\sharp(\fra^{t+\epsilon}\frb^{s+\epsilon})$ for $\epsilon\ll1$. In fact, working with this definition, one can also prove $\mytau^\sharp(\fra^s\frb^t)\subseteq\mytau^\sharp(\fra^s)\mytau^\sharp(\frb^t)$ following a very similar argument as in \autoref{theorem--subadditivity}. The problem is that, it is not clear to us whether $\mytau(\fra^t \fra^s) = \mytau(\fra^{t+s})$, and hence the second conclusion of the subadditivity theorem does not seem to work for $\mytau(\fra^t)$.

\section{Comparison with the blowup}

The goal of this section is to prove \autoref{lemma--asymptotic test ideal is contained in Q}, that is, $\mytau((I^{(hl)})^{1/l})\subseteq I$ for every radical ideal $I\subseteq A$ of height $h$ and every positive integer $l$. This follows from our core result \autoref{lem.BlowupContainmentGeneral}. \autoref{lemma--asymptotic test ideal is contained in Q} implies immediately that $\mytau([\underline{f}]^{1/l})\subseteq I$ for every fixed generating set $\{\underline{f}\}$ of $I^{(hl)}$ (and every fixed compatible system of $p$-power roots of $\underline{f}$) since we always have $\mytau([\underline{f}]^{1/l})\subseteq \mytau((I^{(hl)})^{1/l})$ by \autoref{equation--easy containment}.

Our key idea is to study how information about our perfectoid test ideal can be obtained by blowing up a finitely generated ideal $J$.  The situation is easier if $\sqrt{J}$ contains $p$ as then the blowup of $J A_{\infty}$ is admissible since it is trivial outside of $V(p)$.  This allows us to use Scholze's vanishing theorem for perfectoid spaces \cite[Proposition 6.14]{ScholzePerfectoidspaces} which tells us that passing to the blowup is essentially harmless, up to almost mathematics and factoring.  In the case that $J$ does not contain a power of $p$, however, we use a similar strategy to the one in \cite[section 6]{BhattDirectsummandandDerivedvariant}: we need to pass to certain enlargement of $A_{\infty}$ where a multiple of $p$ is contained in $J B$. Throughout this section, we will mostly work with $\mytau(\fra^t)$, but as discussed earlier, the final result \autoref{lemma--asymptotic test ideal is contained in Q} holds for $\mytau([\underline{f}]^t)$ as well simply because of \autoref{equation--easy containment}.

We start by proving a series of four crucial lemmas (\autoref{lemma--closure at a finite level}, \autoref{lemma--forcing elements in Ainfinity}, \autoref{lemma--closure defined passing to A<p^h/g>} and \autoref{lemma--closure is defined by associated perfectoid space}) that allow us to handle the case that the ideal we are blowing up does not contain a power of $p$.  The reader who is only interested in the case when the ideal we blow up contains a power of $p$ may wish to jump directly to \autoref{lem.BlowupContainmentGeneral} where the main result of the section is proven.  First we need a definition.

\begin{definition}
We define
\[
\begin{array}{r}
0^{\mystar \fra^t}_{[l,h]}=\big\{\eta\in H_\m^d(A)\;|\; p^{1/p^l} f^{1/p^h} \eta=0 \text{ in } H_\m^d(A_\infty) \text{ for all $f \in \fra^{\lceil tp^h \rceil}$}\big\},
\end{array}
\]
where as usual $f^{1/p^h}$ denotes all $p^h$-th roots of $f$ that are part of a compatible system of $p$-power roots of $f$ in $A_\infty$.
We use $0^{\mystar \fra^t}_{[l,\infty]}$ (resp. $0^{\mystar \fra^t}_{[\infty, h]}$) if we allow $h$ (resp. $l$) to range over all positive integers. Under this definition we have $0^{\mystar \fra^t}_{H_\m^d(A)}=0^{\mystar \fra^t}_{[\infty,\infty]}$.
\end{definition}

\begin{lemma}
\label{lemma--closure at a finite level}
We have $0^{\mystar \fra^{t}}_{H_\m^d(A)}=0^{\mystar \fra^{t}}_{[k,k]}$ for all $k\gg0$.
\end{lemma}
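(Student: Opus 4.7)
The plan is to recognize $M_k := 0^{\mystar \fra^t}_{[k,k]}$ as a descending chain of $A$-submodules of $H^d_\m(A)$ and then invoke the Artinian property of $H^d_\m(A)$ to force the chain to stabilize at the desired value $0^{\mystar \fra^t}_{H_\m^d(A)}$. Each $M_k$ is an $A$-submodule because multiplication by elements of $A$ commutes with the action of $p^{1/p^k} f^{1/p^k}$ on $H^d_\m(A_\infty)$; since $A$ is Gorenstein, $H^d_\m(A)$ is the injective hull of the residue field and hence Artinian.

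The key technical verification is the containment $M_{k+1} \subseteq M_k$. Fix $\eta \in M_{k+1}$, $f \in \fra^{\lceil tp^k\rceil}$, and any compatible system $\{f^{1/p^e}\}_e$ of $p$-power roots of $f$ in $A_\infty$. Note that $f^p \in \fra^{\lceil tp^{k+1}\rceil}$ (since $p\lceil tp^k\rceil \geq \lceil tp^{k+1}\rceil$), and the shifted sequence $(f^p)^{1/p^e} := f^{1/p^{e-1}}$ is a compatible system of $p$-power roots of $f^p$. Applying the hypothesis at level $k+1$ yields $p^{1/p^{k+1}} f^{1/p^k} \eta = 0$ in $H^d_\m(A_\infty)$; multiplying by $(p^{1/p^{k+1}})^{p-1}$ produces $p^{1/p^k} f^{1/p^k} \eta = 0$, and since the data was arbitrary, $\eta \in M_k$. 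The chain $\{M_k\}$ therefore stabilizes at some $k_0$, and the containment $0^{\mystar \fra^t}_{H_\m^d(A)} \subseteq M_{k_0}$ is immediate from the definitions.

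For the reverse inclusion, take $\eta \in M_{k_0} = \bigcap_{k\geq k_0} M_k$, integers $e, l > 0$, $f \in \fra^{\lceil tp^e\rceil}$ with a chosen compatible $p^e$-th root $f^{1/p^e}$, and pick $k \geq \max(k_0, e, l)$. Then $f^{p^{k-e}} \in \fra^{\lceil tp^k\rceil}$, and $f^{1/p^e}$ embeds as the $p^k$-th term of the compatible system $\{f^{1/p^{e+j}}\}_{j \geq 0}$ of $p$-power roots of $f^{p^{k-e}}$, so $\eta \in M_k$ forces $p^{1/p^k} f^{1/p^e} \eta = 0$ in $H^d_\m(A_\infty)$. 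Multiplying by $(p^{1/p^k})^{p^{k-l}-1}$ and using $(p^{1/p^k})^{p^{k-l}} = p^{1/p^l}$ produces $p^{1/p^l} f^{1/p^e} \eta = 0$; since $l$, $e$, $f$, and the compatible root were arbitrary, $\eta \in 0^{\mystar \fra^t}_{H_\m^d(A)}$. The only real subtlety is careful bookkeeping of compatible systems of $p$-power roots, which \autoref{Setting} warns are non-unique in mixed characteristic; the shifting identity $(f^{p^{k-e}})^{1/p^k} = f^{1/p^e}$ is precisely what allows a condition formulated at the high level $k$ to yield the desired conclusion at the arbitrary lower level $e$.
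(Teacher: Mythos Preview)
Your proof is correct and uses essentially the same ingredients as the paper's proof: the Artinian property of $H^d_{\fram}(A)$ to stabilize a descending chain, together with the shifting trick $f \mapsto f^{p^{k-e}}$ (as in \autoref{lem:UseEBigForClosure}) to pass between levels. The only difference is organizational: the paper stabilizes first in the $p$-index $l$ (obtaining $0^{\mystar \fra^t}_{[l,\infty]} = 0^{\mystar \fra^t}_{[\infty,\infty]}$) and then in the root-index $h$, whereas you work directly with the diagonal chain $0^{\mystar \fra^t}_{[k,k]}$, which is slightly more streamlined but not a genuinely different route.
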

\begin{proof}
We have containments $\dots \subseteq 0^{\mystar \fra^t}_{[l+1, \infty]}\subseteq 0^{\mystar \fra^t}_{[l,\infty]}\subseteq\cdots$. Since $H_\m^d(A)$ is Artinian, $0^{\mystar \fra^t}_{H_\m^d(A)}=0^{\mystar \fra^t}_{[\infty,\infty]}=0^{\mystar \fra^t}_{[l,\infty]}$ for all $l\gg0$. Now we fix such an $l\gg0$, it follows from the proof of \autoref{lem:UseEBigForClosure} that we have containments $ \dots \subseteq 0^{\mystar \fra^t}_{[l, h+1]}\subseteq 0^{\mystar \fra^t}_{[l,h]}\subseteq\cdots.$ Thus by the Artinian property of $H_\m^d(A)$ again, $0^{\mystar \fra^t}_{[l,\infty]}=0^{\mystar \fra^t}_{[l,h]}$ for all $h\gg0$. Now take $k\geq\max\{l,h\}$. We have $0^{\mystar \fra^{t}}_{H_\m^d(A)}=0^{\mystar \fra^{t}}_{[k,k]}$.
\end{proof}

The next lemma is a slight generalization of \cite[Lemma 3.2]{HeitmannMaBigCohenMacaulayAlgebraVanishingofTor}. We recall that, for any element $g\in A_\infty$,  $A_\infty\langle\frac{p^n}{g}\rangle$ denotes the integral perfectoid algebra, which is the ring of bounded functions on the rational subset $\{x\in X \hspace{0.5em}| \hspace{0.5em} |p^n|\leq |g(x)|\}$ where $X=\Spa(A_\infty[1/p], A_\infty)$ is the perfectoid space associated to $(A_\infty[1/p], A_\infty)$.


\begin{lemma}
\label{lemma--forcing elements in Ainfinity}
Let $I=(p^c, y_1, \dots,y_s)$ be an ideal of $A$ (that contains a power of $p$). Let $g=p^mg_0\in A$ where $p\nmid g_0$, and consider the map $A\to A_\infty\to A_\infty\langle \frac{p^b}{g}\rangle$ for every positive integer $b$. Suppose the image of $z\in A_\infty$ is contained in $IA_\infty\langle \frac{p^b}{g}\rangle$. If $cp^a+m<b$, then for every $g^{1/p^a}$ that is part of a compatible system of $p$-power roots of $g$ in $A_{\infty}$, we have $p^{1/p^a} g^{1/p^a} z\in IA_\infty$.

In particular, if the image of $z$ is contained in $IA_\infty\langle \frac{p^b}{g}\rangle$ for all $b>0$, then $p^{1/p^a}g^{1/p^a}z\in IA_\infty$ for all $a>0$.
\end{lemma}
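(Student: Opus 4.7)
The plan is to adapt the argument of Heitmann--Ma from their cited Lemma 3.2. The central perfectoid identity is the following: setting $T=p^b/g\in A_\infty\langle p^b/g\rangle$, the chosen compatible $p^a$-th roots satisfy $g^{1/p^a}\cdot T^{1/p^a}=p^{b/p^a}$ up to a $(p^{1/p^\infty})$-almost error (coming from $p^a$-th roots of unity, which are almost trivial in the integral perfectoid setting). This identity is what allows multiplication by $g^{1/p^a}$ to cancel the $T$-denominators that arise in the rational localization, while the auxiliary factor $p^{1/p^a}$ in the conclusion is precisely what absorbs the residual almost-math error, playing a role analogous to $p^{1/p^\infty}$ in \autoref{lem.AlmostKernel=0}.

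First I would write the containment as $z=p^c u_0+y_1 u_1+\cdots+y_s u_s$ with $u_i\in A_\infty\langle p^b/g\rangle$, and reduce to the following key claim: for every $u\in A_\infty\langle p^b/g\rangle$, one has $p^{1/p^a}g^{1/p^a}u\in A_\infty+p^c A_\infty\langle p^b/g\rangle$. To establish this claim, I would expand $u$ $p$-adically as $\sum a_{n/p^e}T^{n/p^e}$ with $a_{n/p^e}\in A_\infty$, using the integral perfectoid presentation of $A_\infty\langle p^b/g\rangle$ as a completed $A_\infty\langle T^{1/p^\infty}\rangle/(gT-p^b)$. For a monomial with $n/p^e\geq 1/p^a$, the cancellation identity converts $g^{1/p^a}T^{n/p^e}$ into $p^{b/p^a}T^{n/p^e-1/p^a}$ (times a power-bounded factor); including the $p^{1/p^a}$ factor yields a $p$-exponent of $(b+1)/p^a$, and the hypothesis $cp^a+m<b$ forces $(b+1)/p^a>c$, placing the term in $p^c A_\infty\langle p^b/g\rangle$. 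For monomials with $0<n/p^e<1/p^a$, all factors of $T^{1/p^e}$ are absorbed against corresponding factors of $g^{1/p^e}$, producing an honest element of $A_\infty$ with a positive residual power of $g$. The $n=0$ constant terms are trivially in $A_\infty$. Summing the three cases (and using $p$-adic convergence of the expansion) gives the key decomposition.

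Substituting $p^{1/p^a}g^{1/p^a}u_i=v_i+w_i$ (with $v_i\in A_\infty$ and $w_i\in p^c A_\infty\langle p^b/g\rangle$) into $p^{1/p^a}g^{1/p^a}z$ shows that modulo $IA_\infty$ this element lies in $I\cdot p^c A_\infty\langle p^b/g\rangle$. Iterating this reduction, combined with the $p$-adic completeness of $A_\infty$ and almost flatness of $A_\infty\to A_\infty\langle p^b/g\rangle$ (so that $p^n A_\infty\langle p^b/g\rangle\cap A_\infty$ is almost contained in $p^n A_\infty$), yields $p^{1/p^a}g^{1/p^a}z\in IA_\infty$. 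The main obstacle I expect is the careful bookkeeping of almost-math throughout, particularly in the cancellation identity and in closing the iteration so that the tail lands in $IA_\infty$ rather than its $p$-adic closure. The final "in particular" assertion is then immediate: for each fixed $a>0$, choose any $b>cp^a+m$ and apply the first part.
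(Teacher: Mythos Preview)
Your approach diverges from the paper's and has real gaps. The most serious one is the ``cancellation identity'' $g^{1/p^a}T^{1/p^a}=p^{b/p^a}$ up to an almost error coming from $p^a$-th roots of unity. In mixed characteristic, roots of unity are \emph{not} almost trivial: if a primitive $p$-th root $\zeta_p$ is present, then $\zeta_p-1$ has $p$-adic valuation $1/(p-1)$ and is not annihilated by $(p^{1/p^\infty})$. What is true is that $(g^{1/p^a}T^{1/p^a})/p^{b/p^a}$ is a unit (a $p^a$-th root of $1$) in $A_\infty\langle p^b/g\rangle$. That is enough for your case $n/p^e\ge 1/p^a$, since the unit stays inside $p^cA_\infty\langle p^b/g\rangle$. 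But in the case $0<n/p^e<1/p^a$ you assert the product lands in $A_\infty$; after the correction the unit factor lives only in $A_\infty\langle p^b/g\rangle$, and there is no reason it descends to $A_\infty$, so the decomposition $A_\infty+p^cA_\infty\langle p^b/g\rangle$ is not established. Note also that you never use the integer $m$ from $g=p^m g_0$, even though the hypothesis $cp^a+m<b$ depends on it; this is a warning sign that the argument is incomplete.

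The iteration step is also broken. Your key claim concerns $p^{1/p^a}g^{1/p^a}u$, not $u$ itself, so after one step you have $p^{1/p^a}g^{1/p^a}z\in IA_\infty+p^c IA_\infty\langle p^b/g\rangle$, and to feed the remainder back in you must multiply by another $p^{1/p^a}g^{1/p^a}$. Iterating gives $(pg)^{N/p^a}z\in IA_\infty+p^{Nc}IA_\infty\langle p^b/g\rangle$, which controls only higher powers of $(pg)^{1/p^a}$, not the single factor you want.

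The paper avoids all of this by exploiting the factorization $g=p^mg_0$ with $p\nmid g_0$. One first passes to the almost-model $\widehat{A_\infty[(p^b/g)^{1/p^\infty}]}$, writes a \emph{finite} expression $p^{1/p^t}z=p^cf_0+\sum y_if_i$ with each $f_i$ of bounded degree $\le p^kh$ in $(p^b/g)^{1/p^k}$, and then clears denominators by multiplying by the \emph{integer} power $g_0^h$ (not a fractional power of $g$). This produces an equation in $A_\infty$ of the form $g_0^{h-1/p^a}\cdot(\,p^{1/p^t}g_0^{1/p^a}z-\text{element of }I\,)=0$ modulo $p^{(b-m)/p^a}$. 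Since $p\nmid g_0$ in $A$ and $A\to A_\infty$ is almost flat, $g_0^{h-1/p^a}$ is an almost nonzerodivisor on $A_\infty/p^{(b-m)/p^a}$, so it can be cancelled; finally $b-m>cp^a$ gives $(b-m)/p^a>c$, so the modulus already contains $p^c\in I$ and one concludes $p^{1/p^a}g^{1/p^a}z\in IA_\infty$ in one shot, with no iteration.
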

\begin{proof}
We fixed a compatible system of $p$-power roots of $g$, call it $\{g^{1/p^e}\}_{e=1}^\infty$ that contains the particular $g^{1/p^a}$. Since $A_\infty\langle\frac{p^n}{g}\rangle$ is almost isomorphic to the $p$-adic completion of $A_\infty[(\frac{p^n}{g})^{\frac{1}{p^\infty}}]$ by \cite[Lemma 6.4]{ScholzePerfectoidspaces}, we have $$p^{1/p^t}z\in I\widehat{A_\infty[(\frac{p^b}{g})^{1/p^{\infty}}]}$$ for some $t>a$.
The image of $p^{1/p^t}z$ inside  $A_\infty[(\frac{p^b}{g})^{1/p^{\infty}}]/p^{c}=\widehat{A_\infty[(\frac{p^b}{g})^{1/p^{\infty}}]}/p^{c}$ is contained in the ideal $(y_1,\dots, y_s)$. Therefore we can write
\begin{equation}
\label{equation--express z before clearing denominator}
p^{1/p^t}z=p^{c}f_0+y_1f_1+\cdots+y_sf_s
\end{equation}
where $f_0, f_1,\dots,f_s\in A_\infty[(\frac{p^b}{g})^{1/p^{\infty}}]$. Since this is a finite sum, there exist integers $k$ and $h$ such that $f_0, f_1,\dots,f_s$ are elements in $A_\infty[(\frac{p^b}{g})^{1/p^k}]$ of degree in $(\frac{p^b}{g})^{1/p^k}$ bounded by $p^kh$.

Next we claim that multiplying by $g_0^h$ in \autoref{equation--express z before clearing denominator} will clear all the denominators of the $f_i$. This is because every $g^{1/p^e}$ (that is part of the compatible system of $p$-power roots of $g$) has the form $p^{m/p^e}g_0^{1/p^e}$ for a certain $g_0^{1/p^e}\in A_\infty$ (that is part of a compatible system of $p$-power roots of $g_0$). To see this, simply observe that
\[
(g^{1/p^e})^{p^e}=g=p^mg_0, \text{ which implies } (\frac{g^{1/p^e}}{(p^{1/p^e})^{m}})^{p^e}=g_0\in A_\infty.
\]
Since $A_\infty$ is integrally closed in $A_\infty[1/p]$, we have $\frac{g^{1/p^e}}{(p^{1/p^e})^{m}}\in A_\infty$ whose $p^e$-th power is $g_0$.
One checks that after multiplying by $g_0^h$ to \autoref{equation--express z before clearing denominator} we get:
$$p^{1/p^t}g_0^hz\in(g_0^{h-(1/p^{a})},p^{(b-m)/p^a})\cdot (p^{c}, y_1,\dots,y_s)A_\infty.$$
From this we know:
$$p^{1/p^t}g_0^hz=g_0^{h-(1/p^a)}(p^{c}h_0+y_1h_1+\cdots+y_sh_s) \text{ in } A_\infty/p^{(b-m)/p^a},$$
where $h_0,h_1,\dots,h_s\in A_\infty$. Rewriting this we have
$$g_0^{h-(1/p^a)}(p^{1/p^t}g_0^{1/p^a}z-p^{c}h_0-y_1h_1-\cdots-y_sh_s)=0 \text{ in } A_\infty/p^{(b-m)/p^a}.$$
Since $p\nmid g_0$, $g_0$ is a nonzerodivisor on $A/p$. This implies $g_0^{h-(1/p^a)}$ is an almost nonzerodivisor on $A_\infty/p^{(b-m)/p^a}$ since $A\to A_\infty$ is almost flat by \autoref{thm.ExistenceofAinfty}. Hence $p^{1/p^t}g_0^{1/p^a}z-p^{c}h_0-y_1h_1-\cdots-y_sh_s$ is annihilated by $(p^{1/p^{\infty}})$ in $A_\infty/p^{(b-m)/p^a}$. In particular, since $t>a$, we know $$p^{1/p^{a}} g_0^{1/p^{a}}z\in (p^{c}, y_1,\dots, y_s) \text{ in } A_\infty/p^{(b-m)/p^a}.$$ Finally, note that $b>cp^a+m$ and thus $p^{(b-m)/p^a}$ is a multiple of $p^{c}$, and $g^{1/p^a}$ is a multiple of $g_0^{1/p^a}$. Therefore we have $$p^{1/p^{a}} g^{1/p^{a}}z\in(p^c,y_1,\dots,y_s)A_\infty.$$
This finishes the proof.
\end{proof}

The main technical statement which allows us to pass to the enlargement of $A_\infty$ is contained below.

\begin{lemma}
\label{lemma--closure defined passing to A<p^h/g>}
Let $p,x_1,\dots,x_{d-1}$ be a system of parameters of $A$. For all $\epsilon\ll1$ we have
\[
\begin{array}{rl}
0^{\mystar \fra^{t+\epsilon}}_{H_\m^d(A)} =\Big\{\frac{z}{p^cx_1^c\cdots x_{d-1}^c}\in H_\m^d(A)\;\Big|& \forall e>0, p^{1/p^{\infty}} f^{1/p^e} z\in (p^c, x_1^c, \dots, x_{d-1}^c)A_\infty\langle\frac{p^b}{g}\rangle \text{ for all}\\
& \text{$f \in \fra^{\lceil (t+\epsilon) p^e \rceil}$, all $0\neq g\in\fra$, and all integers $b>0$}\Big\},
\end{array}
\]
where $z\in A$, $c\in \mathbb{N}$, $f^{1/p^e} \in A_{\infty}$ runs over all possible $p^e$-th roots of $f$ that are part of a compatible system of $p$-power roots of $f$.
\end{lemma}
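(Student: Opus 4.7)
The plan is to show both inclusions, beginning with the key preliminary reformulation: the vanishing $p^{1/p^\infty}f^{1/p^e}\eta = 0$ in $H_\m^d(A_\infty)$ (for $\eta = z/(p^cx_1^c\cdots x_{d-1}^c)$) is equivalent to the ideal containment $p^{1/p^\infty}f^{1/p^e}z \in (p^c, x_1^c, \dots, x_{d-1}^c)A_\infty$. I will derive this equivalence from the \v{C}ech complex description of $H_\m^d(A_\infty)$, the almost regularity of $(p,x_1,\dots,x_{d-1})$ on $A_\infty$ given by \autoref{cor.PropertiesofAinfty}, and the footnote-argument from the proof of \autoref{lem.AlmostKernel=0} (which shows how one drops exponents at the cost of multiplying by $p^{1/p^\infty}$). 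Under this reformulation, the left-hand side of the lemma becomes the set of $\eta$ whose representative $z$ satisfies the above containment for every admissible $e$, $f\in\fra^{\lceil(t+\epsilon)p^e\rceil}$, and every compatible $p^e$-th root $f^{1/p^e}$.

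With this rephrasing in hand, the inclusion $\supseteq$ is immediate from $(p^c,\dots)A_\infty \subseteq (p^c,\dots)A_\infty\langle p^b/g\rangle$. For the harder inclusion $\subseteq$, I will assume $z$ satisfies the right-hand side condition and apply \autoref{lemma--forcing elements in Ainfinity} with the lemma's ``$z$'' taken to be $p^{1/p^\ell}f^{1/p^e}z$ for each $\ell>0$. This produces
\[
p^{1/p^\infty}g^{1/p^a}f^{1/p^e}z \in (p^c,x_1^c,\dots,x_{d-1}^c)A_\infty
\]
for every $a>0$, every nonzero $g\in\fra$, every $g^{1/p^a}$, and every relevant $e,f,f^{1/p^e}$. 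Setting $a=e$, the product $g^{1/p^e}f^{1/p^e}=(gf)^{1/p^e}$ becomes a specific $p^e$-th root of $gf\in\fra^{\lceil(t+\epsilon)p^e\rceil+1}$. For $e$ sufficiently large and any $\delta>0$, one has $\fra^{\lceil(t+\epsilon)p^e\rceil+1}\subseteq\fra^{\lceil(t+\epsilon+\delta)p^e\rceil}$ (since $1\leq\delta p^e$ eventually), and products $gf$ of the indicated form generate this ideal; combined with the near-independence of the containment condition from the choice of compatible $p^e$-th root (distinct compatible roots differ by a $p^e$-th root of unity, whose deviation from $1$ has positive $p$-adic valuation and is absorbed by the $p^{1/p^\infty}$ factor), this will yield the left-hand side containment at parameter $t+\epsilon+\delta$ for every $\delta>0$.

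The final step is to invoke the stabilization of $0^{\mystar\fra^{t+\epsilon}}_{H_\m^d(A)}$ as $\epsilon\to 0^+$, justified by the Artinianity of $H_\m^d(A)$ through \autoref{lemma--closure at a finite level}; in the stable range, the ``left-hand condition at $t+\epsilon+\delta$ for all $\delta>0$'' collapses to the left-hand condition at $t+\epsilon$ itself, delivering the desired equality for $\epsilon\ll 1$. The main obstacle I anticipate is precisely the removal of the parasitic factor $g^{1/p^a}$ produced by \autoref{lemma--forcing elements in Ainfinity}: it is \emph{not} directly killable in $A_\infty$, so my strategy is to align $a=e$ (thereby turning the pair into a single $p^e$-th root of an element of $\fra^{\lceil(t+\epsilon)p^e\rceil+1}$), then to exploit the flexibility of the perturbation $\epsilon\mapsto\epsilon+\delta$ together with the finite-level stabilization to close the argument without leaving the stable range.
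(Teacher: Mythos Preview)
Your preliminary reformulation and the easy direction (showing that membership in $0^{\mystar\fra^{t+\epsilon}}_{H_\m^d(A)}$ gives the containment in $(p^c,\dots)A_\infty\langle p^b/g\rangle$) are correct and match the paper's argument. Note, however, that your labeling of the two inclusions is reversed: the ``immediate'' direction you describe is the paper's $\subseteq$, not $\supseteq$.

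The hard direction has a genuine gap. After applying \autoref{lemma--forcing elements in Ainfinity} you obtain
\[
p^{1/p^\infty}\, g^{1/p^e} f^{1/p^e}\, z \in (p^c,x_1^c,\dots,x_{d-1}^c)A_\infty
\]
for all $0\neq g\in\fra$, all $f\in\fra^{\lceil(t+\epsilon)p^e\rceil}$, and the specific roots $g^{1/p^e}f^{1/p^e}$. You then want the closure condition for \emph{every} $h\in\fra^{\lceil(t+\epsilon+\delta)p^e\rceil}$ and \emph{every} compatible $p^e$-th root $h^{1/p^e}$. But a general such $h$ is only a \emph{sum} $\sum c_i g_i f_i$ of products, and in mixed characteristic taking $p^e$-th roots is not additive: there is no relation between $h^{1/p^e}$ and the individual $(g_if_i)^{1/p^e}$. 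Knowing the containment for each summand therefore does not give it for $h$. Your root-of-unity argument is also suspect (nothing guarantees $A_\infty$ is a domain, so two $p^e$-th roots of $h$ need not differ by a unit), but even granting it, it only lets you change between roots of a \emph{fixed} element and does nothing for the additivity problem.

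The paper avoids this obstacle by a different maneuver: it works at two levels $k$ and $2k$ simultaneously and, for a \emph{given} $\tilde f\in\fra^{\lceil(t+\varepsilon_0)p^k\rceil}$, applies the output of \autoref{lemma--forcing elements in Ainfinity} with $g=\tilde f$ and $f=\tilde f^{\,p^k-1}$ (checking $\tilde f^{\,p^k-1}\in\fra^{\lceil(t+\varepsilon_0/2)p^{2k}\rceil}$ via the choice $\frac{\varepsilon_0}{2}p^k\ge t+\varepsilon_0$). Then $g^{1/p^{2k}}f^{1/p^{2k}}=\tilde f^{1/p^{2k}}\cdot(\tilde f^{1/p^{2k}})^{p^k-1}=\tilde f^{1/p^k}$, so one obtains the closure condition for the \emph{arbitrary} element $\tilde f$ and the \emph{arbitrary} compatible root $\tilde f^{1/p^k}$ directly, with no sums involved. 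This single-element trick---using the same $\tilde f$ for both the $g$-slot and the $f$-slot---is the missing idea.
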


\begin{proof}
We first prove the containment $``\subseteq"$. This works for any $\epsilon>0$. Suppose $\frac{z}{p^cx_1^c\cdots x_{d-1}^c}\in 0^{\mystar \fra^{t+\epsilon}}_{H_\m^d(A)}$, then $\frac{p^{1/p^\infty}f^{1/p^e}z}{p^cx_1^c\cdots x_{d-1}^c}=0$ in $H_\m^d(A_\infty)$ for all $f\in \fra^{\lceil (t+\epsilon)p^e \rceil}$ and all $f^{1/p^e}$ that are part of a compatible system of $p$-power roots. This means for every $l>0$, $$p^{1/p^l}f^{1/p^e}z (px_1\cdots x_{d-1})^w\in (p^{c+w}, x_1^{c+w},\dots,x_{d-1}^{c+w})A_\infty$$ for some $w$ (which depends on $c$, $e$ and $l$). But since $p,x_1,\dots,x_{d-1}$ is an almost regular sequence on $A_\infty$ by \autoref{cor.PropertiesofAinfty}, this implies $$p^{1/p^{l-1}}f^{1/p^e}z \in (p^{c}, x_1^{c},\dots,x_{d-1}^{c})A_\infty$$ for all $f \in \fra^{\lceil (t+\epsilon) p^e \rceil}$ and all $l>0$. Hence its image in $A_\infty\langle\frac{p^b}{g}\rangle$ is contained in $(p^c, x_1^c, \dots, x_{d-1}^c)A_\infty\langle\frac{p^b}{g}\rangle$ for all $0\neq g\in\fra$ and all $b> 0$. Thus $p^{1/p^\infty}f^{1/p^e}z \in (p^c, x_1^c, \dots, x_{d-1}^c)A_\infty\langle\frac{p^b}{g}\rangle$.

Next we prove the other containment $``\supseteq"$. We take $\varepsilon_0\ll 1$ such that $0^{\mystar \fra^{t+\varepsilon_0}}_{H_\m^d(A)}$ computes $0^{\mystar \fra^{t+\epsilon}}_{H_\m^d(A)}$. We note that $\varepsilon_0$ depends only on $\fra$ and $t$, and for every $\varepsilon_1<\varepsilon_0$, $0^{\mystar \fra^{t+\varepsilon_1}}_{H_\m^d(A)}$ also computes $0^{\mystar \fra^{t+\epsilon}}_{H_\m^d(A)}$. We choose $k\gg0$ such that $0^{\mystar \fra^{t+\varepsilon_0}}_{[k,k]}=0^{\mystar \fra^{t+\varepsilon_0}}_{H_\m^d(A)}$ by \autoref{lemma--closure at a finite level}, and $\frac{\varepsilon_0}{2}p^k\geq t+\varepsilon_0$. We also observe that $k$ depends on $\fra$, $t$, $\varepsilon_0$ (and hence only depends on $\fra$ and $t$). We will show that $0^{\mystar \fra^{t+\varepsilon_0}}_{[k,k]}=0^{\mystar \fra^{t+\varepsilon_0}}_{H_\m^d(A)}=0^{\mystar \fra^{t+\varepsilon_0/2}}_{H_\m^d(A)}$ contains
\begin{equation}
\label{eq.MidProofP2kClosure}
\begin{array}{rl}
\big\{\frac{z}{p^cx_1^c\cdots x_{d-1}^c}\in H_\m^d(A)\;|& p^{1/p^{2k}} f^{1/p^{2k}} z\in (p^c, x_1^c, \dots, x_{d-1}^c)A_\infty\langle\frac{p^b}{g}\rangle \text{ for all}\\
 & \text{$f \in \fra^{\lceil (t+\varepsilon_0/2) p^{2k} \rceil}$, all $0\neq g\in\fra$, and all integers $b> 0$}\big\},
 \end{array}
\end{equation}
where $f^{1/p^{2k}}$ runs over all possible $p^{{2k}}$-th roots of $f$ that are part of a compatible system of $p$-power roots of $f$.
This will establish the $``\supseteq"$ because the object in \autoref{eq.MidProofP2kClosure} (when applied to $\epsilon=\varepsilon_0/2$) is larger than the object in the statement of \autoref{lemma--closure defined passing to A<p^h/g>} (since it requires fewer conditions).

So select an arbitrary $\frac{z}{p^cx_1^c\cdots x_{d-1}^c}$ in the set in \autoref{eq.MidProofP2kClosure}, we have
$$p^{1/p^{2k}} f^{1/p^{2k}} z\in (p^c, x_1^c, \dots, x_{d-1}^c)A_\infty\langle\frac{p^b}{g}\rangle$$
for all $b>0$. By \autoref{lemma--forcing elements in Ainfinity}, we get
\begin{equation}
\label{equation--forcing z to ideal of Ainfty}
(p^{1/p^{2k}} g^{1/p^{2k}}) p^{1/p^{2k}} f^{1/p^{2k}} z\in (p^c, x_1^c, \dots, x_{d-1}^c)A_\infty
\end{equation}
for all $f \in \fra^{\lceil (t+\varepsilon_0/2) p^{2k} \rceil}$, all $f^{1/p^{2k}}$ part of a compatible system of $p$-power roots of $f$, all $0\neq g\in\fra$, and all $g^{1/p^{2k}}$ part of a compatible system of $p$-power roots of $g$.

Finally, for every $\tilde{f}\in \fra^{\lceil(t+\varepsilon_0)p^k\rceil}$, and every $\tilde{f}^{1/p^k}$ part of a compatible system of $p$-power roots, we can write $\tilde{f}^{1/p^k}=\tilde{f}^{\frac{p^k-1}{p^{2k}}}\tilde{f}^{1/p^{2k}}$, where $\tilde{f}^{1/p^{2k}}$ is the $p^k$-th root of $\tilde{f}^{1/p^k}$ in the compatible system. We claim that $\tilde{f}^{p^k-1}\in \fra^{\lceil (t+\varepsilon_0/2) p^{2k} \rceil}$, this is because
\footnotesize
$$\lceil(t+\varepsilon_0)p^k\rceil(p^k-1)\geq \lceil(t+\varepsilon_0)p^{2k}-(t+\varepsilon_0)p^k\rceil=\lceil(t+\varepsilon_0/2)p^{2k}+(\frac{\varepsilon_0}{2}p^k-(t+\varepsilon_0))p^k\rceil\geq \lceil(t+\varepsilon_0/2)p^{2k}\rceil$$
\normalsize
by our choice of $k$. Now apply (\ref{equation--forcing z to ideal of Ainfty}) to $g=\tilde{f}\in\fra$ (and we use $\tilde{f}^{1/p^{2k}}$ as part of the compatible system of $p$-power roots of $g$) and $f=\tilde{f}^{p^k-1}$, we find that $p^{1/p^k}\tilde{f}^{1/p^k}z\in (p^c, x_1^c, \dots, x_{d-1}^c)A_\infty$. Thus $\frac{z}{p^cx_1^c\cdots x_{d-1}^c}$ is annihilated by $p^{1/p^k}\tilde{f}^{1/p^k}$ in $H_\m^d(A_\infty)$ for every $\tilde{f}^{1/p^k}$ part of a compatible system of $p$-power roots of $\widetilde{f}$ with $\tilde{f}\in \fra^{\lceil(t+\varepsilon_0)p^k\rceil}$. Hence $$\frac{z}{p^cx_1^c\cdots x_{d-1}^c}\in 0^{\mystar \fra^{t+\varepsilon_0}}_{[k,k]}=0^{\mystar \fra^{t+\varepsilon_0}}_{H_\m^d(A)}=0^{\mystar \fra^{t+\varepsilon_0/2}}_{H_\m^d(A)}$$ as desired.
\end{proof}

\begin{lemma}
\label{lemma--closure is defined by associated perfectoid space}
With the notations as in \autoref{lemma--closure defined passing to A<p^h/g>}, we have
\begin{align*}
0^{\mystar \fra^{t+\epsilon}}_{H_\m^d(A)} =& \Big\{\frac{z}{p^cx_1^c\cdots x_{d-1}^c}\in H_\m^d(A)\;\Big|\; p^{1/p^{\infty}} f^{1/p^e} z=0 \text{ in } \\
& \myH^0\left(\frac{A}{(p^c, x_1^c, \dots, x_{d-1}^c)}\otimes^\mathbf{L}\mathbf{R}\Gamma(X_\infty^{b,g}, A_\infty\langle\frac{p^b}{g}\rangle)\right) \\
& \text{ for all $f \in \fra^{\lceil (t+\epsilon) p^e \rceil}$, all $0\neq g\in\fra$, and all $b> 0$}\Big\}.
\end{align*}
Here we set $X_\infty^{b,g}=\Spa(A_\infty\langle\frac{p^b}{g}\rangle[1/p], A_\infty\langle\frac{p^b}{g}\rangle)$ to be the perfectoid space associated to $(A_\infty\langle\frac{p^b}{g}\rangle[1/p], A_\infty\langle\frac{p^b}{g}\rangle)$.
\end{lemma}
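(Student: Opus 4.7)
The plan is to deduce this lemma directly from \autoref{lemma--closure defined passing to A<p^h/g>} by identifying, up to almost mathematics, the $\myH^0$ appearing on the right-hand side with the quotient $A_\infty\langle\tfrac{p^b}{g}\rangle \big/ (p^c, x_1^c, \ldots, x_{d-1}^c) A_\infty\langle\tfrac{p^b}{g}\rangle$ appearing in the earlier lemma. Since in both formulations the relevant condition is tested after multiplication by $p^{1/p^\infty}$, the $(p^{1/p^\infty})$-torsion gap introduced by passing through almost quasi-isomorphisms is absorbed, and the two descriptions of $0^{\mystar \fra^{t+\epsilon}}_{H_\m^d(A)}$ will match verbatim.

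The first step is to invoke Scholze's almost vanishing theorem \cite[Proposition 6.14]{ScholzePerfectoidspaces} on the affinoid perfectoid space $X_\infty^{b,g}$: it yields that $\mathbf{R}^i\Gamma(X_\infty^{b,g}, A_\infty\langle\tfrac{p^b}{g}\rangle)$ is almost zero for $i\geq 1$, so that the edge morphism $A_\infty\langle\tfrac{p^b}{g}\rangle \longrightarrow \mathbf{R}\Gamma(X_\infty^{b,g}, A_\infty\langle\tfrac{p^b}{g}\rangle)$ is an almost quasi-isomorphism. Applying $\tfrac{A}{(p^c,x_1^c,\ldots,x_{d-1}^c)}\otimes^\mathbf{L}_A(-)$ preserves almost quasi-isomorphisms (the source admits a bounded free resolution, namely a shift of the Koszul complex on the regular sequence $p^c, x_1^c, \ldots, x_{d-1}^c$ in $A$), so taking $\myH^0$ produces an almost isomorphism
\[
\myH^0\!\left(\tfrac{A}{(p^c, x_1^c, \ldots, x_{d-1}^c)}\otimes^\mathbf{L}_A \mathbf{R}\Gamma(X_\infty^{b,g}, A_\infty\langle\tfrac{p^b}{g}\rangle)\right) \;\simeq\; \frac{A_\infty\langle\tfrac{p^b}{g}\rangle}{(p^c, x_1^c, \ldots, x_{d-1}^c)\,A_\infty\langle\tfrac{p^b}{g}\rangle},
\]
where on the right we have used the tautological fact that $\myH^0$ of a derived tensor product is the ordinary tensor product (no flatness hypothesis is needed here). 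This almost isomorphism is compatible with the canonical maps from $A_\infty\langle\tfrac{p^b}{g}\rangle$ into both sides, so it carries the image of $f^{1/p^e} z$ on the left to the image of $f^{1/p^e} z$ on the right.

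Second, I would observe that because the kernel and cokernel of an almost isomorphism are $(p^{1/p^\infty})$-torsion, the condition ``$p^{1/p^\infty} f^{1/p^e} z = 0$ in the displayed $\myH^0$'' is equivalent to the statement that $f^{1/p^e} z$ is $(p^{1/p^\infty})$-torsion in $A_\infty\langle\tfrac{p^b}{g}\rangle / (p^c, x_1^c, \ldots, x_{d-1}^c) A_\infty\langle\tfrac{p^b}{g}\rangle$, which is in turn the condition ``$p^{1/p^\infty} f^{1/p^e} z \in (p^c, x_1^c, \ldots, x_{d-1}^c) A_\infty\langle\tfrac{p^b}{g}\rangle$'' used in \autoref{lemma--closure defined passing to A<p^h/g>}. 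Quantifying over the same collection of $f$, $g$, $b$, $e$, and $f^{1/p^e}$, the two set descriptions of $0^{\mystar \fra^{t+\epsilon}}_{H_\m^d(A)}$ thus agree, and the lemma follows.

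The essential input is Scholze's almost vanishing theorem; the rest of the proof is formal. The only subtlety to check carefully is that ``$p^{1/p^\infty}\cdot(-)$'' genuinely bridges the gap between ``being zero in $\myH^0$ of a derived tensor product'' and ``lying in the relevant ideal of $A_\infty\langle\tfrac{p^b}{g}\rangle$''. This is guaranteed here precisely because both formulations already carry the $p^{1/p^\infty}$ prefactor, so no additional loss is incurred when translating between the two viewpoints.
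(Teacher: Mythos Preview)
Your proposal is correct and follows exactly the approach of the paper: deduce the lemma from \autoref{lemma--closure defined passing to A<p^h/g>} together with Scholze's almost vanishing theorem \cite[Proposition 6.14]{ScholzePerfectoidspaces}. The paper's proof is a single sentence citing these two inputs; you have simply spelled out the formal details (the Koszul resolution to propagate the almost quasi-isomorphism through the derived tensor product, and the observation that the $p^{1/p^\infty}$ prefactor absorbs the almost-zero discrepancy).
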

\begin{proof}
This is true by \autoref{lemma--closure defined passing to A<p^h/g>} and utilizing the fact that $A_\infty\langle\frac{p^b}{g}\rangle$ is almost isomorphic to $\mathbf{R}\Gamma(X_\infty^{b,g}, A_\infty\langle\frac{p^b}{g}\rangle)$ with respect to $(p^{1/p^\infty})$ by Scholze's vanishing theorem of perfectoid spaces \cite[Proposition 6.14]{ScholzePerfectoidspaces}.
\end{proof}

We need to recall one more fact well known to experts.

\begin{lemma}
\label{lem.AdmissibleBlowupFactoring}
Suppose that $B$ is an integral perfectoid algebra and that $J \subseteq B$ is a finitely generated ideal containing a power of $p$.  Set $X = \Spa(B[1/p], B)$ to be the perfectoid space associated to $(B[1/p], B)$.  Then the map of ringed spaces
\[
(X, \O_{X}^+\cong B) \to \Spec(B)
\]
factors through the blowup of $J$.
\end{lemma}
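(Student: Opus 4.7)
The plan is to verify the universal property of the blowup: a morphism of ringed spaces $(Y,\O_Y)\to\Spec(B)$ factors through $\mathrm{Bl}_J(\Spec(B))$ if and only if the pullback ideal $J\cdot\O_Y$ is invertible, i.e.\ locally principal and locally generated by a nonzerodivisor. Applied to $(X,\O_X^+)\to\Spec(B)$ (coming from $B\to\Gamma(X,\O_X^+)$, which is almost an isomorphism), it suffices to exhibit a rational cover of $X$ on which $J\cdot\O_X^+$ is principal and generated by a nonzerodivisor.

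Write $J=(g_1,\dots,g_n)$ and fix $c$ with $p^c\in J$. First I would introduce the rational subsets
\[
U_i \;=\; U\!\left(\tfrac{g_1,\dots,g_n}{g_i}\right) \;=\; \{\,x\in X \,:\, |g_j(x)|\le |g_i(x)|\ \text{for all }j\,\}, \qquad i=1,\dots,n.
\]
These form an open cover of $X$: at any point $x\in X$, because $p^c=\sum h_jg_j$ for some $h_j\in B$, one has $|p^c(x)|\le \max_j|g_j(x)|$, and the maximum is realized by some index $i$, so $x\in U_i$. (The condition $|p^c(x)|\neq 0$ holds everywhere since $p$ is invertible on the analytic adic space $X$.)

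On $U_i$ the elements $g_j/g_i$ belong to $\O_X^+(U_i)$ by the very definition of the rational localization, so $J\cdot\O_X^+(U_i)=(g_i)\O_X^+(U_i)$ is principal. I would then check that $g_i$ is a nonzerodivisor on $\O_X^+(U_i)$: on $U_i$ we have the uniform bound $|g_i(x)|\ge|p^c(x)|$, hence $g_i$ is a unit in $\O_X(U_i)=\O_X^+(U_i)[1/p]$. Since the integral perfectoid ring $\O_X^+(U_i)$ injects into its generic fiber $\O_X(U_i)$ (it is $p$-torsion free), any $h\in\O_X^+(U_i)$ with $g_i h=0$ becomes $0$ after inverting $p$, hence $h=0$. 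Thus $(g_i)$ is a locally principal ideal generated by a nonzerodivisor on $U_i$, so $J\cdot\O_X^+$ is an invertible $\O_X^+$-module.

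By the universal property of the blowup, the ringed space map $(X,\O_X^+)\to\Spec(B)$ factors uniquely through $\mathrm{Bl}_J(\Spec(B))\to\Spec(B)$, as desired. The main technical point is to make sure the universal property of the blowup applies in the ringed-space (non-scheme) setting; this is standard once one knows $J\cdot\O_X^+$ is invertible in the sense above, since the local charts of $\mathrm{Bl}_J$ are the affine schemes $\Spec\big(B[g_j/g_i]_{j}\big)$ and the map $\O_X^+(U_i)\leftarrow B[g_j/g_i]_{j}$ sending $g_j/g_i$ to its image in $\O_X^+(U_i)$ is well-defined and glues across the cover.
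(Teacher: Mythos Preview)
Your argument is correct and is precisely the standard ``admissible blowup'' argument that the paper defers to references for: the paper simply cites footnote~8 in \cite[Proof of Proposition~6.2]{BhattDirectsummandandDerivedvariant} and \cite[14.8]{GabberRameroFoundationsAlmostRingTheory} without giving details, whereas you have spelled out the rational cover $\{U_i\}$, the principality of $J\cdot\O_X^+$ on each $U_i$, and the nonzerodivisor check via $g_i \mid p^c$ in $\O_X^+(U_i)$. One small clarification worth making explicit: the reason $p^c/g_i \in \O_X^+(U_i)$ (and hence $g_i$ is a unit in $\O_X(U_i)$) is not just the pointwise bound $|g_i(x)|\geq |p^c(x)|$, but the algebraic identity $p^c = g_i\cdot\sum_j h_j(g_j/g_i)$ with $h_j\in B$ and $g_j/g_i\in\O_X^+(U_i)$.
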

\begin{proof}
This is described in footnote \#8 in \cite[Proof of Proposition 6.2]{BhattDirectsummandandDerivedvariant} and as pointed out there is implicit in the description of adic spaces found in \cite[14.8]{GabberRameroFoundationsAlmostRingTheory}.
\end{proof}

We are ready to prove our core lemma in this section.

\begin{lemma}
\label{lem.BlowupContainmentGeneral}
Let $\pi : Y \to X = \Spec A$ be the blowup of some ideal $J \subseteq A$ such that $Y$ is normal and that $\fra \subseteq \sqrt{J}$.  Suppose that $E$ on $Y$ is a Weil divisor with $\pi(E)\subseteq V(J)$. Fix $t\in\mathbb{R}_{\geq0}$ and suppose that for every $e>0$ and every $f \in \fra^{\lceil tp^e \rceil}$,
\[
\Div_Y(f) \geq p^e E.
\]
Then $\mytau(\fra^t) \subseteq \Gamma(Y, \O_Y(K_{Y/X}-E)) \subseteq A$.
\end{lemma}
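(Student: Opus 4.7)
Plan. My plan is to reformulate the desired containment via Matlis duality as an inclusion of submodules of $H_\m^d(A)$, and then verify this inclusion by combining the perfectoid characterization of $0^{\mystar \fra^t}_{H_\m^d(A)}$ given in the preceding lemmas with the geometry of the blowup $Y$.

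First, I would use Matlis duality for the complete local ring $A$ to turn the containment $\mytau(\fra^t) \subseteq I := \Gamma(Y, \O_Y(K_{Y/X}-E))$ into the inclusion of submodules $\Ann_{H_\m^d(A)}(I) \subseteq 0^{\mystar \fra^t}_{H_\m^d(A)}$. Since $\mytau(\fra^t) = \mytau^\sharp(\fra^{t+\epsilon})$ for $0 < \epsilon \ll 1$, it suffices to establish $\Ann_{H_\m^d(A)}(I) \subseteq 0^{\mystar \fra^{t+\epsilon}}_{H_\m^d(A)}$ for some such $\epsilon$. Fix then $\eta \in H_\m^d(A)$ annihilated by $I$ and write $\eta = z/(p^c x_1^c \cdots x_{d-1}^c)$ for a regular system of parameters $(p, x_1, \ldots, x_{d-1})$ of $A$. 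By \autoref{lemma--closure is defined by associated perfectoid space}, the problem reduces to showing that $p^{1/p^\infty} f^{1/p^e} z$ vanishes in $\myH^0\big(\tfrac{A}{(p^c, x_1^c, \ldots, x_{d-1}^c)} \otimes^{\mathbf L} \mathbf R\Gamma(X_\infty^{b,g}, A_\infty\langle p^b/g\rangle)\big)$ for every $f \in \fra^{\lceil (t+\epsilon)p^e\rceil}$, every $0 \ne g \in \fra$, every $b > 0$, and every compatible $p^e$-th root $f^{1/p^e} \in A_\infty$.

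Next I would reduce this vanishing to geometry on $Y$. For each $g \in \fra$ and $b > 0$, I would use \autoref{lem.AdmissibleBlowupFactoring} to factor the ringed-space morphism $(X_\infty^{b,g}, \O^+_{X_\infty^{b,g}}) \to \Spec A$ through the blowup of $(p^b, g)$ in $A_\infty\langle p^b/g\rangle$. Since $\fra \subseteq \sqrt J$, sufficiently high powers of $g$ lie in $J$, so by functoriality of blowups (together with the fact that $A_\infty\langle p^b/g\rangle$ is integrally closed in its $p$-inverted localization) the map lifts to one factoring through the normalized blowup $Y$ of $J$. On $Y$, the divisorial hypothesis $\Div_Y(f) \geq p^e E$ places $f$ in $\Gamma(Y, \O_Y(-p^e E))$; arguing as in \autoref{lemma--forcing elements in Ainfinity}, any compatible $p^e$-th root $f^{1/p^e}$ pulls back to a section of $\O(-E)$ on the perfectoid cover, up to $p^{1/p^\infty}$-torsion.

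Finally, I would combine this with the annihilation of $\eta$ by $I$ via the Grothendieck-style duality pairing $\O_Y(K_{Y/X}-E) \otimes \O_Y(E) \to \omega_Y$ for the proper birational $\pi : Y \to X$ (where $\omega_X = \O_X$ because $X$ is regular). The annihilation lets me represent $\eta$, on the perfectoid cover, by a cohomology class valued in $\O_Y(E)$; multiplying by $f^{1/p^e}$ lands in $\O_Y$-valued cohomology whose vanishing modulo $(p^c, x_1^c, \ldots, x_{d-1}^c)$ up to $p^{1/p^\infty}$-torsion is delivered by Scholze's almost vanishing on affinoid perfectoid spaces (the same input powering \autoref{lemma--closure is defined by associated perfectoid space}). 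The hard parts will be the reduction to $Y$ — aligning the ``admissible'' blowup of $(p^b,g)$ with the blowup of $J$ — and the final step of threading the $p^{1/p^\infty}$-almost-zero corrections consistently through the Grothendieck-duality pairing between $\O_Y(K_{Y/X}-E)$ and $\O_Y(E)$ at the level of perfectoid cohomology, so that the product class genuinely vanishes.
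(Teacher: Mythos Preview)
Your overall architecture matches the paper's: reduce via Matlis duality to showing that $\ker\big(H^d_\m(A)\to \myH^d\mathbf{R}\Gamma_\m(Y,\O_Y(E))\big)\subseteq 0^{\mystar\fra^{t+\epsilon}}_{H^d_\m(A)}$, and feed this through \autoref{lemma--closure is defined by associated perfectoid space} together with \autoref{lem.AdmissibleBlowupFactoring}. But the two central steps in your sketch are not yet right.

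First, the step where you say ``arguing as in \autoref{lemma--forcing elements in Ainfinity}, any compatible $p^e$-th root $f^{1/p^e}$ pulls back to a section of $\O(-E)$ on the perfectoid cover'' is the whole difficulty, and \autoref{lemma--forcing elements in Ainfinity} is not the relevant input. The perfectoid space has no evident pullback of $E$, so ``$f^{1/p^e}\in\O(-E)$'' has no direct meaning there. The paper's mechanism is to construct, for each fixed $e$ and $f$, an \emph{intermediate scheme}: one writes down finitely many generators $h_1,\dots,h_k$ of $f^{1/p^e}\O_Y(E)$ on the charts $U_j$, manufactures explicit elements $h_i'\in\overline{J A_\infty}$ from the divisorial hypothesis $\Div_Y(f)\geq p^eE$, and blows up $(z_1,\dots,z_m,h_1',\dots,h_k')$ in $\Spec A_\infty$ to get $\rho:Y_\infty\to Y$. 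This is what makes the factorization $\O_Y\to\O_Y(E)\xrightarrow{\cdot f^{1/p^e}}\rho_*\O_{Y_\infty}$ hold on the nose. Your proposed factorization through ``the blowup of $(p^b,g)$ in $A_\infty\langle p^b/g\rangle$'' is vacuous, since that ideal is already principal there; the content lies in blowing up the enlarged ideal containing the $h_i'$, and only \emph{then} does \autoref{lem.AdmissibleBlowupFactoring} apply (because $g^l\in J$ forces $p^{lb}$ into the enlarged ideal).

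Second, your final paragraph inverts the logic of the duality. The condition $I\eta=0$ does not let you ``represent $\eta$ by a class valued in $\O_Y(E)$''; it says precisely that $\eta$ has \emph{zero} image in $\myH^d\mathbf{R}\Gamma_\m(Y,\O_Y(E))$. The argument is then a pure factorization: the composite $H^d_\m(A)\xrightarrow{\cdot f^{1/p^e}}$ (perfectoid cohomology) passes through $\myH^d\mathbf{R}\Gamma_\m(Y,\O_Y(E))$ by the previous paragraph, so $\eta$ goes to zero; Scholze's almost vanishing only serves to identify the perfectoid target with $H^d_\m(A_\infty\langle p^b/g\rangle)$ almost. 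There is no ``threading almost-zero corrections through a Grothendieck-duality pairing at the perfectoid level'' --- all the duality (local plus Grothendieck) happens once, on $Y$, to identify $\Ann_A$ of the kernel with $\Gamma(Y,\O_Y(K_{Y/X}-E))$.
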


The strategy of the proof, at least in the case that $J$ contains a power of $p$, is to show we can factor the map $A \to \myR \Gamma(Y, \O_Y) \xrightarrow{\cdot f^{1/p^e}} \myR \Gamma(X_{\infty}, \O_{X_{\infty}})$ through $\myR \Gamma(Y, \O_Y(E))$ where $X_{\infty} = \Spa(A_\infty[1/p], A_\infty)$.  After this, we use the fact that $\myR \Gamma(X_{\infty}, \O_{X_{\infty}})$ is almost quasi-isomorphic to $A_{\infty}$ \cite[Proposition 6.14]{ScholzePerfectoidspaces}.  Thus our overall strategy is similar to (and inspired by) the proof of the \emph{derived} direct summand conjecture, \cite{BhattDirectsummandandDerivedvariant}.  Roughly speaking, the idea is that our divisor condition forces $f^{1/p^e} \O_Y(E)$ to pullback to something contained in $\O_{X_{\infty}}$, at least up to some issues of integrality. Below we give details.

\begin{proof}
Let $J=(z_1,\dots,z_m)$, write $Y = \Proj A \oplus J T \oplus J^2 T^2 \oplus \cdots$ and let $U_1,\dots, U_m$ be an affine cover of $Y$ with $U_j=Y \setminus V(z_jT) \cong \Spec A[\frac{z_1}{z_j}, \frac{z_2}{z_j},\dots,\frac{z_m}{z_j}]$.  We fix an $e$ and an element $f^{1/p^e}\in A_\infty$ such that $f^{1/p^e}$ is part of a compatible system of $p$-power roots of $f$ with $f\in\fra^{\lceil (t+\epsilon)p^e\rceil}\subseteq \fra^{\lceil tp^e \rceil}$.

Let $h \in A_\infty[z_j^{-1}]$ be such that $h\in f^{1/p^e}\O_Y(E)(U_j) \subseteq f^{1/p^e} A[z_j^{-1}]$ for some $j$ (note we are using the $\pi(E) \subseteq V(J))$. 
Since $\Div_Y(f)\geq p^e E$, we have $h^{p^e}\in \O_Y(U_j)$. For any such fixed $h$, we claim the following:

\begin{claim}
\label{clm.IntegralElementInBlowup}
There exists $h'\in\overline{(z_1,\dots,z_m)A_\infty} = \overline{J A_{\infty}}$ (where $\overline{\bullet}$ denotes integral closure of an ideal) such that, if $Y'_{\infty} \to \Spec A_\infty$ is the blow up of $(z_1,\dots,z_m, h')$ in $\Spec A_\infty$ and $\rho' : Y'_{\infty} \to Y$ is the induced affine map (see \autoref{lem.IntegralElementPartialNormalization}), then we have that $$h \in \O_{Y'_\infty}(\rho'^{-1} U_j) \subseteq A_{\infty}[z_j^{-1}].$$
\end{claim}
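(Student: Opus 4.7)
The plan is to construct $h'$ explicitly from the data and then verify the two required properties in turn. Since $\tilde h := h/f^{1/p^e}$ lies in $\O_Y(E)(U_j) \subseteq A[z_j^{-1}]$ — the latter inclusion because $\tilde h$'s poles on $U_j$ are supported in $V(z_j)$ — I would write $\tilde h = \alpha/z_j^k$ for some $\alpha \in A$ and positive integer $k$, enlarging $k$ if needed so that $h^{p^e} \in A[z_1/z_j,\dots,z_m/z_j]$ has total degree at most $kp^e$ in the variables $z_i/z_j$. The natural candidate is then
\[
h' \;:=\; z_j^k h \;=\; f^{1/p^e}\alpha \;\in\; A_\infty.
\]

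First I would verify $h' \in \overline{JA_\infty}$. Using $h^{p^e} = f\tilde h^{p^e}$, a direct computation gives $(h')^{p^e} = z_j^{kp^e} h^{p^e}$, and expanding $h^{p^e}$ as a polynomial of degree at most $kp^e$ in $z_i/z_j$ shows that $z_j^{kp^e}h^{p^e}$ is a polynomial of total degree $kp^e$ in $z_1,\dots,z_m$ with coefficients in $A$, hence lies in $J^{kp^e}\cap A$. Thus $h'$ satisfies the monic relation $X^{p^e} - (h')^{p^e} = 0$ with $(h')^{p^e} \in (JA_\infty)^{kp^e}$, which gives $h' \in \overline{J^kA_\infty} \subseteq \overline{JA_\infty}$.

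Next I would verify $h \in \O_{Y'_\infty}(\rho'^{-1}U_j)$. By \autoref{lem.IntegralElementPartialNormalization}, $\rho'$ is affine and $\rho'^{-1}(U_j)$ is an affine partial normalization of $U_j \otimes_A A_\infty$. On the $j$-th affine chart $V_j = \Spec A_\infty[z_1/z_j,\dots,z_m/z_j, h'/z_j]$ we have the identity $h'/z_j = z_j^{k-1}h$, and $h$ itself satisfies the monic integral equation $X^{p^e} - h^{p^e} = 0$ with coefficients in $A[z_1/z_j,\dots,z_m/z_j]$. Together these should place $h$ in $\O(V_j)$, and glueing with the other charts of $Y'_\infty$ intersected with $\rho'^{-1}U_j$ (which are localizations of $V_j$ at the $z_i/z_j$) assembles $h$ as a global section of $\O_{Y'_\infty}$ on $\rho'^{-1}U_j$.

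The main obstacle I expect is the case $k > 1$, where $h'/z_j$ recovers only $z_j^{k-1}h$ rather than $h$ on $V_j$, so $h$ is not immediately visible as a polynomial in the newly adjoined generator. To handle this one must invoke the integrality of $h$ over $A_\infty[z_1/z_j,\dots,z_m/z_j]$ together with the partial-normalization property of $\rho'$, and argue that the integral closure incorporates $h$ across the full preimage $\rho'^{-1}U_j$, not merely on the open locus where $z_j$ is invertible.
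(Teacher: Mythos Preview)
Your construction of $h' = z_j^k h = f^{1/p^e}\alpha$ and the verification that $(h')^{p^e} \in J^{kp^e}$, hence $h' \in \overline{JA_\infty}$, are fine. The gap is exactly the obstacle you flag at the end, and your proposed resolution does not work. The ring $\O_{Y'_\infty}(\rho'^{-1}U_j) = A_\infty[z_1/z_j,\dots,z_m/z_j,\,h'/z_j]$ is obtained from $A_\infty[z_1/z_j,\dots,z_m/z_j]$ by adjoining the \emph{single} element $h'/z_j$; it is a partial normalization, not the integral closure. Knowing that $h$ is integral over $A_\infty[z_1/z_j,\dots,z_m/z_j]$ and that $z_j^{k-1}h = h'/z_j$ lies in the ring does not force $h$ itself to lie there when $k>1$. (A toy model: over $k[x]$, adjoining $xy$ where $y$ is integral does not produce $y$.) There is no ``partial-normalization property'' that closes this gap, and gluing over the other charts does not help since $\rho'^{-1}(U_j)$ is already the single affine $V_j$.

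The paper avoids the issue by choosing a different $h'$, exploiting the one feature of $A_\infty$ you did not use: compatible $p$-power roots. Writing $h = f^{1/p^e}w/z_j^{p^l}$ and picking $d\gg 0$ with $fw^{p^e}z_j^{p^d-p^{e+l}} \in J^{p^d}$, the paper sets
\[
h' \;=\; f^{1/p^d}\,w^{1/p^{d-e}}\,z_j^{(p^d-p^{e+l})/p^d}.
\]
Then $(h')^{p^d}\in J^{p^d}$ gives $h'\in\overline{JA_\infty}$ as before, but now one has the exact identity $h = (h'/z_j)^{p^{d-e}}$, so $h$ is literally a power of the adjoined generator and lies in $A_\infty[z_1/z_j,\dots,z_m/z_j,\,h'/z_j]$ with no appeal to integral closure. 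Your $h'$ is a $p^e$-th power of something like this; taking a suitable $p$-power root is what makes the argument close.
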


\begin{proof}[Proof of Claim]
By construction, we can write $h = {f^{1/p^e} w \over z_j^{p^l}}$ for some integer $l$ and some $w \in A$.  Since $$\frac{fw^{p^e}}{z_j^{p^{e+l}}}=h^{p^e}\in \O_Y(U_j),$$ we know that there exists some $d\gg0$ such that $fw^{p^e}z_j^{p^d-p^{e+l}} \in (z_1, \ldots, z_m)^{p^d}$. Fixing a compatible system of $p$-power roots of $w$ and $z_j$ in $A_{\infty}$, we note that
\[
(f^{1/p^d}w^{1/p^{d-e}}z_j^{(p^d-p^{e+l})/p^d})^{p^d}=fw^{p^e}z_j^{p^d-p^{e+l}}\in (z_1,\dots,z_m)^{p^d}.
\]
Thus $f^{1/p^d}w^{1/p^{d-e}}z_j^{(p^d-p^{e+l})/p^d} \in \overline{(z_1,\dots,z_m)A_\infty}$. We set $h'=f^{1/p^d}w^{1/p^{d-e}}z_j^{(p^d-p^{e+l})/p^d}$, and let $Y'_\infty$ be the blow up. We have
$$h=\frac{f^{1/p^e} w}{z_j^{p^l}}=\frac{(f^{1/p^d}w^{1/p^{d-e}}z_j^{(p^d-p^{e+l})/p^d})^{p^{d-e}}}{z_j^{p^{d-e}}}\in \O_{Y'_\infty}(\rho'^{-1} U_j) \subseteq A_{\infty}[z_j^{-1}].$$
This finishes the proof of the Claim.
\end{proof}

Because the module $f^{1/p^e}\O_Y(E)(U_j)$ is finitely generated over $\O_Y(U_j)$ for every $j$, we collect the generators for all $1\leq j\leq m$ and we call them $h_1,\dots, h_k$ (there are implicit $j$s we are suppressing). For each $h_i$ we construct $h_i' \in \overline{J A_{\infty}}$ as in \autoref{clm.IntegralElementInBlowup}.
Let $Y_\infty$ be the blow up of $(z_1,\dots, z_m, h_1',\dots,h_k')$ of $\Spec A_\infty$. Since each $h_i'$ is in the integral closure of $(z_1,\dots, z_m)A_\infty$, the inverse image of the $\{U_j\}$ forms an affine cover of $Y_\infty$ by \autoref{lem.IntegralElementPartialNormalization}.  We then have a factorization $Y_{\infty} \xrightarrow{\rho} Y \to X$ with $\rho$ affine, and for each $j$ we have a natural map $\O_Y(U_j) \to \O_{Y_\infty}(\rho^{-1} U_j)$.

\begin{claim}
\label{clm.BlowupFactor}
With notation as above, the canonical map
\[
\O_Y\to \rho_* \O_{Y_\infty}\xrightarrow{\cdot f^{1/p^e}} \rho_* \O_{Y_\infty}
\]
factors through $\O_Y(E)$.
\end{claim}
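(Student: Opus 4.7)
The plan is to verify the factorization affine-locally on the cover $\{U_j\}$ of $Y$ and then glue. On each $U_j$, I would write the desired factorization as
\[
\O_Y(U_j) \hookrightarrow \O_Y(E)(U_j) \xrightarrow{\,\cdot f^{1/p^e}\,} \O_{Y_\infty}(\rho^{-1}U_j),
\]
where the first map is the canonical inclusion (available because $E$ is effective: it is supported on the exceptional locus $\pi^{-1}V(J)$ and is bounded above by $\tfrac{1}{p^e}\Div_Y(f)$), and the second sends $s \mapsto f^{1/p^e} s$, viewed a priori inside $A_\infty[z_j^{-1}]$.

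The crux will be to verify that multiplication by $f^{1/p^e}$ actually lands in $\O_{Y_\infty}(\rho^{-1}U_j)$ and not merely in $A_\infty[z_j^{-1}]$. Since $f^{1/p^e}\O_Y(E)(U_j)$ is finitely generated over $\O_Y(U_j)$, it suffices to check its generators, which by the construction preceding the claim lie among $h_1,\ldots,h_k$. But \autoref{clm.IntegralElementInBlowup} produced for each such $h_i$ an element $h_i' \in \overline{JA_\infty}$, and by construction $Y_\infty$ is the blowup of $(z_1,\ldots,z_m,h_1',\ldots,h_k')$; combined with \autoref{lem.IntegralElementPartialNormalization}, this forces $h_i \in \O_{Y_\infty}(\rho^{-1}U_j)$.

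It then remains to glue. Since the local factorizations are given by a single global operation---multiplication by the fixed element $f^{1/p^e} \in A_\infty$ composed with the canonical sheaf inclusion $\O_Y \hookrightarrow \O_Y(E)$---they automatically agree on overlaps $U_j\cap U_{j'}$ and assemble into a global sheaf map $\O_Y \hookrightarrow \O_Y(E) \to \rho_*\O_{Y_\infty}$. I expect the only nontrivial step to be the local containment in the second paragraph; but that is exactly what \autoref{clm.IntegralElementInBlowup} and the integral-closure bookkeeping around it were set up to deliver, so what remains is essentially a diagram chase.
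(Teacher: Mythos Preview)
Your proposal is correct and follows essentially the same approach as the paper's proof: both arguments work chart by chart, reduce to showing $f^{1/p^e}\O_Y(E)(U_j)\subseteq \O_{Y_\infty}(\rho^{-1}U_j)$, and verify this on the finitely many generators $h_i$ via \autoref{clm.IntegralElementInBlowup} together with the construction of $Y_\infty$. Your version simply spells out more of the details (the role of \autoref{lem.IntegralElementPartialNormalization}, the gluing) that the paper leaves implicit.
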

\begin{proof}[Proof of claim]
By \autoref{clm.IntegralElementInBlowup} and construction, we know that the $\O_Y(U_j)$-generators of $f^{1/p^e}\O_Y(E)(U_j)$ are contained in $\O_{Y'_\infty}(\rho'^{-1} U_j)$.  Hence
\[
f^{1/p^e}\O_Y(E)(U_j)\subseteq \O_{Y_\infty}(\rho^{-1}U_j)
\]
for every $1\leq j\leq m$,
which proves the claim.
\end{proof}

\subsection*{The case when $J$ contains a power of $p$}  At this point we are essentially done in the case that $J$ contains a power of $p$.  We note that by \autoref{lem.AdmissibleBlowupFactoring} applied to $B=A_\infty$ and $X=X_\infty=\Spa(A_\infty[1/p], A_\infty)$, we have a factorization $(X_\infty, \cO_{X_\infty}^+=A_\infty)\to Y_\infty\to \Spec A_\infty$
because $Y_\infty$ is the blow up of $A_\infty$ at a finitely generated ideal $(z_1,\dots,z_m, h_1',\dots,h_k')$ that contains a power of $p$ (by our hypotheses). Therefore we have a commutative diagram (we abuse notation and use $\Gamma_{\fram}(Y, \bullet)$ to denote the functor $\Gamma_{\fram}(\Gamma(Y, \bullet))$):
\[
{\footnotesize
\xymatrix@C=12pt{
H^d_{\fram}(A) \ar[r]^{\cdot f^{1/p^e}} \ar[d] & H_\m^d(A_\infty) \ar[dr] \ar[drr]^\phi &  {} \\
\myH^d \myR \Gamma_{\fram}(Y, \O_Y) \ar[rr]^{\cdot f^{1/p^e}} \ar@{.>}[dr] & {} & \myH^d \myR \Gamma_{\fram}(Y_{\infty}, \O_{Y_{\infty}}) \ar[r] & \myH^d \myR \Gamma_{\fram}(X_{\infty}, \O_{X_{\infty}}^+) \\
& \myH^d \myR \Gamma_{\fram}(Y, \O_Y(E)) \ar@{.>}[ur] & {}
}
}
\]
Here the existence of the dotted arrows follows from \autoref{clm.BlowupFactor}. Since, by \cite[Proposition 6.14]{ScholzePerfectoidspaces}, $\myR \Gamma(X_{\infty}, \O_{X_{\infty}}^+)$ is almost isomorphic to $A_\infty$, the map $\phi$ is an almost isomorphism. Hence elements in $0^{\mystar\fra^{t+\epsilon}}_{H_\m^d(A)}$ are precisely those $\eta\in H_\m^d(A)$ whose image is almost zero in $\myH^d \myR \Gamma_{\fram}(X_{\infty}, \O_{X_{\infty}}^+)$, when we vary over all $e > 0$ and all $f \in \fra^{\lceil (t+\epsilon)p^e \rceil}$ (and all $f^{1/p^e}$ that is part of a compatible system of $p$-power roots of $f$) in the above diagram. But this is the case if $\eta$ has trivial image in $\myH^d \myR \Gamma_{\fram}(Y, \O_Y(E))$ by the commutative diagram. Therefore we have
\[
0^{\mystar\fra^{t+\epsilon}}_{H_\m^d(A)} \supseteq \ker \big( H^d_{\fram}(A) \to  \myH^d \myR \Gamma_{\fram}(Y, \O_Y(E)) \big).
\]
However by local and Grothendieck duality, see for instance \cite{HartshorneResidues}, the Matlis dual of $\myH^d\mathbf{R}\Gamma_\m(Y, \cO_Y(E))$ is $\myH^{-d}(\myR \Gamma(Y, \omega_{Y}(-E)[d])$ and so the Matlis dual of the map $H^d_{\fram}(A) \to \myH^d\mathbf{R}\Gamma_\m(Y, \cO_Y(E))$ is $A \cong \omega_A \leftarrow \Gamma(Y, \omega_{Y}(-E))$.  It follows that
\[
\mytau(\fra^t)\subseteq \Ann_A\left(\ker\left(H_\m^d(A)\to \myH^d\mathbf{R}\Gamma_\m(Y, \cO_Y(E))\right)\right) = \Gamma(Y,\cO_Y(K_{Y/X}-E)) \subseteq A.
\]
Here we take $K_X = 0$ and $K_Y = K_{Y/X}$ as described in and before \autoref{def.RelCanonical}.
 This proves the case when $J$ contains a power of $p$.

\subsection*{The case when $J$ may not contain a power of $p$}
We now handle the general case. This is the place that we need to use \autoref{lemma--closure is defined by associated perfectoid space} (which relies on the technical \autoref{lemma--forcing elements in Ainfinity} and \autoref{lemma--closure defined passing to A<p^h/g>}).

Let $Y_\infty^{b,g} \to \Spec A_\infty\langle\frac{p^b}{g}\rangle$ be the blowup of the ideal $(z_1,\dots, z_m, h_1',\dots,h_k') A_\infty\langle\frac{p^b}{g}\rangle$, where $0\neq g \in \fra$.  So we have a commutative diagram:
\[
\xymatrix{
Y_\infty^{b,g} \ar[r] \ar[d] & Y_\infty \ar[r] \ar[d] & Y \ar[d] \\
\Spec A_\infty\langle\frac{p^b}{g}\rangle \ar[r] & \Spec A_\infty \ar[r] & \Spec A.
}
\]
Now, for each $0\neq g \in \fra \subseteq \sqrt{J}$, for some $l > 0$ we have $g^l \in J$ and so $p^{lb}$ is contained inside $J \cdot A_\infty\langle\frac{p^b}{g}\rangle\subseteq (z_1,\dots, z_m, h_1',\dots,h_k')A_\infty\langle\frac{p^b}{g}\rangle$. Therefore by \autoref{lem.AdmissibleBlowupFactoring} applied to $B=A_\infty\langle\frac{p^b}{g}\rangle$, for every $b$ and every $0\neq g\in \fra$ we have a factorization
\[
(X_\infty^{b,g}, \mathcal{O}_{X_\infty^{b,g}}^+=A_\infty\langle\frac{p^b}{g}\rangle)\to Y_\infty^{b,g} \to \Spec A_\infty\langle\frac{p^b}{g}\rangle
\]
where we use $X_\infty^{b,g}=\Spa(A_\infty\langle\frac{p^b}{g}\rangle[1/p], A_\infty\langle\frac{p^b}{g}\rangle)$ to denote the perfectoid space associated to $(A_\infty\langle\frac{p^b}{g}\rangle[1/p], A_\infty\langle\frac{p^b}{g}\rangle)$.

The above discussion shows that for every $f^{1/p^e}$ part of a compatible system of $p$-power roots of $f$ with $f \in \fra^{\lceil (t+\epsilon)p^e \rceil}$, every $0\neq g \in \fra$ and every positive integer $b$, we have the following commutative diagram:
{
\footnotesize
\[\xymatrix@C=18pt{
A \ar[r] \ar[d] & A_\infty \ar[d]\ar[rd]^{\cdot f^{1/p^e}} & & &\\
\mathbf{R}\Gamma(Y, \cO_Y) \ar[r] \ar@{.>}[rd] & \mathbf{R}\Gamma(Y_\infty, \cO_{Y_\infty}) \ar[rd]^{\cdot f^{1/p^e}} & A_\infty \ar[d]\ar[r]  &  A_\infty\langle\frac{p^b}{g}\rangle\ar[d] \ar[rd] & \\
  & \mathbf{R}\Gamma(Y, \cO_Y(E)) \ar@{.>}[r] & \mathbf{R}\Gamma(Y_\infty, \cO_{Y_\infty}) \ar[r] & \mathbf{R}\Gamma(Y_\infty^{b,g}, \cO_{Y_\infty^{b,g}}) \ar[r] & \mathbf{R}\Gamma(X_\infty^{b,g}, A_\infty\langle\frac{p^b}{g}\rangle)
}\]%
}%
\noindent
Again, the key point is that we have the dotted arrows in the above diagram, because we proved that the map $\cO_Y\to \cO_{Y_\infty}\xrightarrow{\cdot f^{1/p^e}} \cO_{Y_\infty}$ factors through $\cO_Y(E)$ by \autoref{clm.BlowupFactor} (up to pushforward by affine morphisms that we omit from the notation).

By \autoref{lemma--closure is defined by associated perfectoid space}, $\frac{z}{p^cx_1^c\cdots x_{d-1}^c}\in 0^{\mystar \fra^{t+\epsilon}}_{H_\m^d(A)}$ if and only if the image of $z$ is annihilated by $(p^{1/p^\infty})$ under the natural map induced from the top left to the right bottom of the above diagram
\[
\begin{array}{rl}
 & \frac{A}{(p^c, x_1^c, \dots, x_{d-1}^c)}\\
 \to & \myH^0\left(\frac{A}{(p^c, x_1^c, \dots, x_{d-1}^c)} \otimes^{\mathbf{L}}\mathbf{R}\Gamma(Y, \cO_Y(E))\right)\\
 \to & \myH^0\left(\frac{A}{(p^c, x_1^c, \dots, x_{d-1}^c)} \otimes^{\mathbf{L}} \mathbf{R}\Gamma(X_\infty^{b,g}, A_\infty\langle\frac{p^b}{g}\rangle)\right)
\end{array}
\]
for every $f^{1/p^e}$, every $0\neq g \in \fra$ and every $b>0$. But this is clearly the case if $z$ has trivial image in $\myH^0\left(\frac{A}{(p^c, x_1^c, \dots, x_{d-1}^c)} \otimes^{\mathbf{L}}\mathbf{R}\Gamma(Y, \cO_Y(E))\right)$.
Thus we have
\[
\varinjlim_c\ker\left( \frac{A}{(p^c, x_1^c, \dots, x_{d-1}^c)}\to \myH^0\left(\frac{A}{(p^c, x_1^c, \dots, x_{d-1}^c)} \otimes^{\mathbf{L}}\mathbf{R}\Gamma(Y, \cO_Y(E))\right) \right)
\]
is contained in  $0^{\mystar \fra^{t+\epsilon}}_{H_\m^d(A)}$.  Now, the above is precisely
\[
\ker\big(H_\m^d(A)\to \myH^d\mathbf{R}\Gamma_\m(Y, \cO_Y(E))\big)
\]
because $\varinjlim_c\myH^0\big( \frac{A}{(p^c, x_1^c, \dots, x_{d-1}^c)} \otimes^{\mathbf{L}}\mathbf{R}\Gamma(Y, \cO_Y(E)) \big)\cong \myH^d\mathbf{R}\Gamma_\m(Y, \cO_Y(E))$. Note that here the transition maps are $\frac{A}{(p^c, x_1^c, \dots, x_{d-1}^c)}\xrightarrow{\cdot px_1\cdots x_{d-1}} \frac{A}{(p^{c+1}, x_1^{c+1}, \dots, x_{d-1}^{c+1})}$, which follows from the \v{C}ech complex characterization of $\mathbf{R}\Gamma_\m(\bullet)$, see \cite[page 130]{BrunsHerzog}. Again by Matlis, local and Grothendieck duality, $\Ann_A\left(\ker\left(H_\m^d(A)\to \myH^d\mathbf{R}\Gamma_\m(Y, \cO_Y(E))\right)\right) = \Gamma(Y,\cO_Y(K_{Y/X}-E)) \subseteq A.$
Therefore we have
\[
\mytau(\fra^t)=\mytau^\sharp(\fra^{t+\epsilon})=\Ann_A(0^{\mystar \fra^{t+\epsilon}}_{H_\m^d(A)})\subseteq \Gamma(Y, \cO_Y(K_{Y/X}-E))
\]
which proves the lemma.
\end{proof}

We come to our main result of the section.  We first remind our reader of the general definition of a symbolic power of a radical ideal.

\begin{definition}
If $Q \subseteq A$ is a prime ideal, then the $n$th symbolic power of $Q$, denoted $Q^{(n)}$ is defined to be $Q^n A_Q \cap A$.

Suppose that $I \subseteq A$ is a radical ideal.  Suppose $I = Q_1 \cap \dots \cap Q_t$ is a decomposition of $I$ into minimal primes of $I$.  In this case the $n$th symbolic power of $I$, denoted $I^{(n)}$, is defined to be the intersection:
\[
Q_1^{(n)} \cap \dots \cap Q_t^{(n)}.
\]
\end{definition}

\begin{theorem}
\label{lemma--asymptotic test ideal is contained in Q}
If $I\subseteq A$ is a radical ideal such that each prime component has height $\leq h$ then we have
\[
\mytau((I^{(lh)})^{1/l}) \subseteq I
\]
for every $l$.
\end{theorem}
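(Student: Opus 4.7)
The strategy is to invoke \autoref{lem.BlowupContainmentGeneral}. First I would reduce to the case $I = Q$ prime of height exactly $h$: writing $I = Q_1 \cap \cdots \cap Q_t$ as a minimal prime decomposition with $h_j := \height(Q_j) \leq h$, the definition of symbolic powers gives $I^{(lh)} \subseteq Q_j^{(lh)} \subseteq Q_j^{(lh_j)}$, so \autoref{prop.EasyContainments}(3) yields $\mytau((I^{(lh)})^{1/l}) \subseteq \mytau((Q_j^{(lh_j)})^{1/l})$. Assuming the prime case, intersecting over $j$ then gives $\mytau((I^{(lh)})^{1/l}) \subseteq \bigcap_j Q_j = I$.

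So assume $I = Q$ is prime of height $h$. Let $\pi \: Y \to X = \Spec A$ be the normalized blowup of $Q$; this is projective birational, $Y$ is normal, and $Q \cdot \O_Y = \O_Y(-F)$ for an effective Cartier divisor $F$. Set $J := Q$ (so $\sqrt{J} = Q \supseteq Q^{(lh)} =: \fra$), take $t := 1/l$, and define the Weil divisor $E := h F$, supported on $V(J)$. The core computation is then as follows: for each prime exceptional divisor $E_i$ in the support of $F$, let $v_i := \ord_{E_i}$; the center of $v_i$ on $X$ is exactly $Q$ (the exceptional divisor of the normalized blowup of a prime in a regular ring dominates $V(Q)$ generically, since over $\eta_Q$ it is the blowup of the maximal ideal of the regular local ring $A_Q$, which is a $\bP^{h-1}$-bundle). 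Hence $A_Q \hookrightarrow \O_{Y, E_i}$, giving $v_i(QA_Q) = v_i(Q) =: a_i$. For $f \in (Q^{(lh)})^{k}$ with $k = \lceil p^e/l \rceil$, the inclusion $(Q^{(lh)})^k \subseteq Q^{(lhk)}$ (from $Q^{(a)}Q^{(b)} \subseteq Q^{(a+b)}$) together with $Q^{(N)} \subseteq Q^{N} A_Q$ yields $v_i(f) \geq lhk \cdot a_i$; since $lk \geq p^e$, we obtain $v_i(f) \geq p^e h a_i = p^e \ord_{E_i}(E)$, and at non-exceptional divisors the inequality is trivial. This verifies the divisorial hypothesis of \autoref{lem.BlowupContainmentGeneral}.

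The conclusion of the lemma then reads $\mytau(\fra^{1/l}) \subseteq \Gamma(Y, \O_Y(K_{Y/X} - E))$. Since $Q$ is radical it is integrally closed, so $\pi_* \O_Y(-F) = \overline{Q} = Q$; thus the proof is completed by establishing $K_{Y/X} \leq (h-1) F$ on $Y$, i.e., $v_i(K_{Y/X}) \leq (h-1) a_i$ at every exceptional $E_i$. This is the classical Ein--Lazarsfeld--Smith discrepancy bound: localizing at $\eta_Q$ reduces to the blowup of the maximal ideal of the regular local ring $A_Q$ of dimension $h$, which is smooth with relative canonical equal to $(h-1)$ times the exceptional divisor, and this identity then spreads out to the required global bound on $Y$.

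The main obstacle will be verifying carefully that the normalized blowup of a prime $Q$ in a mixed-characteristic complete regular local ring has every exceptional valuation centered exactly on $Q$, so that the localization at $\eta_Q$ is harmless and the Ein--Lazarsfeld--Smith discrepancy bound applies uniformly. Should any ``extra'' exceptional divisors arise whose valuation has center strictly above $Q$, one would need to enlarge $E$ along those components (and bound the corresponding $v_j$-values of $f$ by a separate symbolic-power computation), but the overall structure of the argument remains the same.
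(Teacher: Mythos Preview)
Your reduction to the prime case is valid and a reasonable alternative to the paper's direct treatment of radical $I$; the overall framework via \autoref{lem.BlowupContainmentGeneral} is also the right one. However, your choice $E = hF$ (with $F$ the full Cartier divisor satisfying $Q\cdot\O_Y = \O_Y(-F)$) creates a genuine gap, and your suggested fix points in the wrong direction.

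The problem is exactly the ``extra'' exceptional primes $E_j$ of $F$ whose center $P_j$ on $X$ strictly contains $Q$. Writing $a_j := v_j(Q)$, the divisorial hypothesis along $E_j$ demands $v_j(f) \geq p^e h\, a_j$ for $f \in Q^{(lhk)}$, $lk \geq p^e$. But $g \in Q^{(N)}$ only says $sg \in Q^N$ for some $s \notin Q$; since $s$ may well lie in $P_j$, the value $v_j(s)$ is uncontrolled and you cannot conclude $v_j(g) \geq N a_j$. Enlarging $E$ along these components, as you propose, makes this hypothesis \emph{harder}, not easier. Your final inequality $K_{Y/X} \leq (h-1)F$ has the same defect: localizing at $\eta_Q$ pins down the coefficient along the dominant component, but says nothing about the coefficients of $K_{Y/X}$ along the $E_j$.

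The correct move --- and what the paper does --- is to \emph{shrink} $E$. Take $E = hD$ where $D$ is the single reduced prime divisor on $Y$ dominating $V(Q)$ (there is exactly one: over $A_Q$ the map is the blowup of the maximal ideal of a regular local ring, smooth with irreducible exceptional divisor and $v_D(Q)=1$). Now the divisorial hypothesis of \autoref{lem.BlowupContainmentGeneral} is only tested along $D$, where your symbolic-power computation gives $v_D(f) \geq lhk \geq p^e h$ on the nose. For the conclusion $\Gamma(Y,\O_Y(K_{Y/X}-hD)) \subseteq Q$, it suffices to check this after localizing at $Q$: there the extra $E_j$ disappear entirely, $K_{Y/X}$ becomes exactly $(h-1)D$ by \autoref{lem.RelativeCanonicalOfBlowup}, and one reads off $\Gamma(Y,\O_Y(-D))_Q = Q A_Q$. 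No global comparison of $K_{Y/X}$ with $F$ is needed, and the extra exceptional divisors never enter.
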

\begin{proof}
Let $\pi : Y\to\Spec A$ be the normalization of the blowup of $I \subseteq A$.  In particular, $\pi$ is the blowup of some $J = \overline{I^n}$ by \autoref{lem.BlowupOfNormalizedIdealPower}.  Since $A$ is regular, we let $D = \sum_i D_i$ denote the union of components of the inverse image of $V(I)$ which dominate components of $V(I)$.  If we write $I = Q_1 \cap \dots \cap Q_t$ a primary decomposition, then over the localization $A_{Q_i}$, we are simply blowing up a power the maximal ideal $Q_iA_{Q_i}$ in a regular local ring.  It follows that there is exactly one $D_i$ lying over each $V(Q_i)$.

Next notice that $I^{(lh)}$ is contained in $\sqrt{J}$ since neither symbolic powers or integral closures change the vanishing locus.  Furthermore, elements of
\[
(I^{(lh)})^{\lceil p^e / l \rceil} \subseteq I^{(p^e h)}
\]
vanish to order at least $p^e h$ on each $D_i$ by construction, and so we may apply \autoref{lem.BlowupContainmentGeneral} with $E := hD$.
It is then enough to show that
\[
\Gamma(Y, \cO_Y(K_{Y/X}-E)) \subseteq I.
\]

Thus we must compute the exceptional divisor $K_{Y/X}$.  Since regular local rings are pseudo rational by \cite[Section 4]{LipmanTeissierPseudorationallocalringsandatheoremofBrianconSkoda}, we see that $H^d_{\fram}(A) \to H^d_{\fram}(Y, \cO_Y)$ injects, and so by local and Grothendieck duality, the Matlis dual map $\Gamma(Y, \cO_Y(K_{Y/X})) \to A$ surjects.  It follows that $K_{Y/X} \geq 0$.

We now write
\[
K_{Y/X} = \sum_i a_i D_i + \text{other effective terms}
\]
and compute the integers $a_i$.  Since $D_i$ is the only exceptional divisor dominating a component $V(Q_i) \subseteq V(I)$, this can be done after localizing at $Q_i$ and so the statement reduces to computing the relative canonical divisor of the blowup of a regular local ring of dimension $h_i \leq h$ at its maximal ideal.  At that point we see that $a_i = h_i - 1\leq h-1$ by \autoref{lem.RelativeCanonicalOfBlowup}.  It follows immediately that $\Gamma(Y, \cO_Y(K_{Y/X}-E))_{Q_i} \subseteq \Gamma(Y, \O_Y(-D))_{Q_i} = Q_i$ and so $\Gamma(Y, \cO_Y(K_{Y/X}-E)) \subseteq I$ as desired.
\end{proof}


\section{Relation with multiplier ideals}

We have defined $\mytau(\fra^t)=\mytau(A, \fra^t) \subseteq A$ and have shown it satisfies at least some formal properties similar to those of the multiplier ideal \cite{LazarsfeldPositivity2} (in this section we will always write $\mytau(A, \fra^t)$ to clarify which ring we are working with).  On the other hand, $A[1/p]$ is a ring of equal characteristic $0$ and so there exists a log resolution of $(\Spec A[1/p], \fra \cdot A[1/p])$ and so we can compute its multiplier ideal.  Thus it is natural to compare $\mytau(A, \fra^t) \cdot A[1/p]$ with $\mJ(A[1/p], (\fra \cdot A[1/p])^t)$.

\begin{theorem}
\label{prop.ComparisonWithMultiplier}
For a pair $(A, \fra^t)$, we have
\[
\mytau(A, \fra^{t}) \cdot A[1/p] \subseteq \mJ(A[1/p], (\fra \cdot A[1/p])^t).
\]
\end{theorem}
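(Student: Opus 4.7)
The plan is to deduce the bound from our core result \autoref{lem.BlowupContainmentGeneral} by working on a birational model $\pi\colon Y\to X=\Spec A$ whose restriction over $\Spec A[1/p]$ is a log resolution of $\fra\cdot A[1/p]$. Given such a $Y$, I would apply the lemma to bound $\mytau(A,\fra^t)$ by global sections of a twisted invertible sheaf on $Y$, and then invert $p$ to recognize those sections as the classical multiplier ideal on $\Spec A[1/p]$.

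To produce $Y$, I would start from a log resolution $Y_\eta \to \Spec A[1/p]$ of $(\Spec A[1/p],\fra\cdot A[1/p])$, which exists by Hironaka's theorem since $A[1/p]$ has characteristic $0$. Such a resolution can be produced by first blowing up $\fra\cdot A[1/p]$ (so that $\fra$ becomes invertible) and then iteratively blowing up smooth centers all supported over $V(\fra\cdot A[1/p])$. Taking scheme-theoretic closures of these centers in the corresponding intermediate models over $\Spec A$, performing the same sequence of blowups, and finally normalizing yields a projective birational morphism $\pi\colon Y\to \Spec A$ with $Y$ normal, $\fra\cdot\O_Y=\O_Y(-G)$ for some effective Cartier divisor $G$, and $Y\times_X\Spec A[1/p]=Y_\eta$. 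Since $\pi$ is projective birational, standard results realize $Y$ as the blowup of a coherent ideal $J\subseteq A$; because every center used in the construction lies set-theoretically over $V(\fra)$, I may further arrange $V(J)\subseteq V(\fra)$, equivalently $\fra\subseteq\sqrt{J}$.

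Now set $E:=\lfloor tG\rfloor$. Then $\pi(E)\subseteq V(J)$ since $\Supp(G)\subseteq\pi^{-1}(V(\fra))\subseteq\pi^{-1}(V(J))$, and for every $e>0$ and every $f\in\fra^{\lceil tp^e\rceil}$ the relation $\fra\cdot\O_Y=\O_Y(-G)$ gives $\Div_Y(f)\geq \lceil tp^e\rceil G\geq p^e\lfloor tG\rfloor=p^e E$ coefficient-wise. The hypotheses of \autoref{lem.BlowupContainmentGeneral} are therefore satisfied, so
\[
\mytau(A,\fra^t)\;\subseteq\;\Gamma\bigl(Y,\O_Y(K_{Y/X}-\lfloor tG\rfloor)\bigr)\;\subseteq\;A.
\]

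To conclude I would invert $p$. Properness of $\pi$ implies that $M:=\Gamma(Y,\O_Y(K_{Y/X}-\lfloor tG\rfloor))$ is a finitely generated $A$-module, so $M\cdot A[1/p]=M[1/p]$, and flat base change along $\Spec A[1/p]\to \Spec A$ identifies $M[1/p]$ with $\Gamma\bigl(Y_\eta,\O_{Y_\eta}(K_{Y_\eta/\Spec A[1/p]}-\lfloor tG|_{Y_\eta}\rfloor)\bigr)$, which is by definition $\mJ(A[1/p],(\fra\cdot A[1/p])^t)$ because $Y_\eta$ is a log resolution. The main obstacle is the construction in the second paragraph: globalizing a characteristic-zero log resolution of $\Spec A[1/p]$ to a single blowup of $\Spec A$ along an ideal containing a power of $\fra$; once this globalization step is in hand, the rest is a formal combination of \autoref{lem.BlowupContainmentGeneral} with flat base change.
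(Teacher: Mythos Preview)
Your approach is essentially the paper's: apply \autoref{lem.BlowupContainmentGeneral} to a normal projective model $Y\to\Spec A$ that restricts over $\Spec A[1/p]$ to a log resolution, then invert $p$. The only substantive difference is how you build $Y$. The paper avoids your step-by-step closure of blowup centers entirely: it takes a single ideal $J\subseteq A[1/p]$ whose blowup is a log resolution (any projective birational map is a blowup), arranges $\fra\cdot A[1/p]\subseteq\sqrt{J}$, contracts to $J'=J\cap A$, and lets $Y$ be the \emph{normalized blowup of $\fra\cdot J'$}. Blowing up the product guarantees $\fra\cdot\O_Y$ is invertible, and over $A[1/p]$ one recovers the blowup of $\fra\cdot J=J$ (same log resolution, since $\fra$ is already principalized there). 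This is considerably cleaner than tracking a sequence of closures, and it sidesteps a small wrinkle in your construction: if you literally take the scheme-theoretic closure of $V(\fra\cdot A[1/p])$ in $\Spec A$ as your first center, you are blowing up the $p$-saturation of $\fra$ rather than $\fra$ itself, and this need not make $\fra\cdot\O_Y$ invertible over the special fiber (consider $\fra=(px,y)$). You can fix this by simply blowing up $\fra$ first over $\Spec A$, but the paper's single-ideal trick is more direct.

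One citation point: $A[1/p]$ is not of finite type over a field, so invoking Hironaka is not quite right; the paper cites Temkin's resolution for quasi-excellent schemes of characteristic zero \cite{TemkinDesingularizationOfQuasiExcellentCharZero}.
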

\begin{proof}
First let $J \subseteq A[1/p]$ be an ideal whose blowup produces a log resolution of \mbox{$(A[1/p], (\fra \cdot A[1/p])^{t})$} \cite{TemkinDesingularizationOfQuasiExcellentCharZero}.  Because a log resolution principalizes $\fra$, the blowup of $\fra \cdot J$ also produces the same log resolution of $(A[1/p], (\fra \cdot A[1/p])^t)$.  Since we may choose this log resolution to be an isomorphism outside of $\fra \cdot A[1/p]$ (since $A[1/p]$ is regular), we may assume that $\fra\cdot A[1/p] \subseteq \sqrt{J}$.  Consider $J' = J \cap A$ and notice that $\fra \subseteq (\fra \cdot A[1/p]) \cap A \subseteq \sqrt{J} \cap A \subseteq \sqrt{J'}$.
Let $\pi : Y \to X = \Spec A$ be the normalized blowup of $\fra \cdot J'$ (the blowup of $\overline{(\fra \cdot J')^n}$ for some $n > 0$ by \autoref{lem.BlowupOfNormalizedIdealPower}).  Write $\fra \cdot \cO_Y = \cO_Y(-G)$ and let $E = \lfloor t G \rfloor$. Finally write $U = \Spec A[1/p] \subseteq \Spec A$ and $V = \pi^{-1}(U)$.
Now observe that $\pi|_V : V \to U$ is the blowup of $J$ and hence the log resolution we started with.
It follows that the hypotheses of \autoref{lem.BlowupContainmentGeneral} are satisfied and so $\mytau(A, \fra^{t}) \subseteq \Gamma(Y, \cO_Y(K_{Y/X}-\lfloor t G \rfloor))$.  But now by definition, $\Gamma(V, \cO_Y(K_{Y/X}-\lfloor t G \rfloor)) = \mJ(A[1/p], (\fra \cdot A[1/p])^t)$ and the result follows.
\end{proof}

We expect that the other containment should hold as well, namely:

\begin{conjecture}
$\mytau(A, \fra^{t}) \cdot A[1/p] = \mJ(A[1/p], (\fra \cdot A[1/p])^t)$
\end{conjecture}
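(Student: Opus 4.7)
The plan is to establish the reverse containment $\mJ(A[1/p], (\fra \cdot A[1/p])^t) \subseteq \mytau(A, \fra^t) \cdot A[1/p]$. After clearing denominators, this amounts to showing that for every $c \in A$ whose image in $A[1/p]$ lies in $\mJ$, there is an integer $N \geq 0$ with $p^N c \in \mytau(A, \fra^t)$. Via Matlis duality on the complete local Gorenstein ring $A$, setting $W := 0^{\mystar \fra^{t+\epsilon}}_{H_\m^d(A)}$ so that $\mytau(A, \fra^t) = \Ann_A W$, this translates into the requirement that $p^N c \cdot W = 0$ inside $E := H_\m^d(A)$.

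First I would pick the proper birational model $\pi : Y \to X = \Spec A$ exactly as in the proof of \autoref{prop.ComparisonWithMultiplier}: choose $J' \subseteq A$ so that $J' \cdot A[1/p]$ produces a log resolution of $(A[1/p], \fra \cdot A[1/p])$, let $Y$ be the normalized blowup of $\fra \cdot J'$, and write $\fra \cdot \cO_Y = \cO_Y(-G)$. By Grothendieck/local duality on the generic fiber $V := \pi^{-1}(\Spec A[1/p])$, together with Kawamata--Viehweg vanishing in characteristic zero, one has the classical identification $\mJ(A[1/p], (\fra \cdot A[1/p])^t) = \Ann_{A[1/p]} \ker\bigl(H_\m^d(A[1/p]) \to \mathbb{H}_\m^d(V, \cO_V(\lfloor tG \rfloor))\bigr)$. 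The crux is then to exhibit, for each $\eta \in W$, a factorization showing that some $p$-power multiple $p^N \eta$ maps to zero in $\mathbb{H}_\m^d(Y, \cO_Y(\lfloor tG \rfloor))$, with $N$ uniform at least on the submodule $c W$. Once this is achieved, the above Matlis dual identification (after inverting $p$) forces $p^N c \cdot W = 0$ in $E$.

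To attempt the crux, the natural strategy is to run the argument of \autoref{lem.BlowupContainmentGeneral} in reverse. For $\eta \in W$ we know that $\eta$ is almost annihilated by $f^{1/p^e}$ in $H_\m^d(A_\infty)$ for every $f \in \fra^{\lceil (t+\epsilon) p^e \rceil}$ and every compatible system of $p$-power roots. I would iterate Andr\'e's construction (\autoref{subsec.Andre's construction}) on the generators of $\fra$ to build a perfectoid enlargement $B$ of $A_\infty$ in which $\fra \cdot B$ becomes, up to almost mathematics, principal with compatible $p^e$-th roots at every level; form the associated perfectoid space $X_\infty^B$ and admissible blowup $Y_\infty^B$, so that Scholze's vanishing theorem \cite[Proposition 6.14]{ScholzePerfectoidspaces} identifies $\mathbf{R}\Gamma(X_\infty^B, \cO_{X_\infty^B}^+)$ with $B$ up to $(p^{1/p^\infty})$-torsion. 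If the resulting diagram descends to a comparison between $\mathbb{H}_\m^d(Y, \cO_Y(\lfloor tG \rfloor))$ and the perfectoid local cohomology up to a controlled power of $p$, then the almost-vanishing condition defining $W$ transfers to honest vanishing modulo $p^N$ on $Y$, completing the proof.

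The main obstacle is precisely this descent/comparison step: we need a mixed characteristic analogue of the coherent vanishing that, in characteristic zero, Kawamata--Viehweg provides. Concretely, one needs a statement that $R^i \pi_* \cO_Y(K_{Y/X} - \lfloor tG \rfloor)$ is ``perfectoid-controlled'' (annihilated up to $p$-torsion after the passage through $A_\infty$ and its enlargements) for $i > 0$, so that elements of $\mathbb{H}_\m^d(Y, \cO_Y(\lfloor tG \rfloor))$ can be detected inside perfectoid cohomology with only $p$-power loss. No such theorem is currently available in the generality we need, and this is exactly what prevents the one-directional factorization of \autoref{clm.BlowupFactor} from being inverted. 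In its absence, an element $\eta$ whose image in $H_\m^d(A_\infty)$ is only almost-annihilated need not have a well-controlled lift to coherent cohomology on $Y$, and this is the essential gap that any proof of the conjecture must fill.
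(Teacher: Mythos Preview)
The statement you are attempting to prove is labeled a \emph{Conjecture} in the paper, and the paper provides no proof. Immediately after stating it, the authors write: ``We do not know how to prove it unfortunately, and it is certainly related to the question of localizing $\mytau$ which we also do not know how to handle.'' So there is no paper proof to compare your proposal against.

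Your proposal is not a proof either, and you acknowledge this yourself: the ``descent/comparison step'' you isolate---a mixed-characteristic vanishing statement that would let almost-annihilation in $H^d_{\fram}(A_\infty)$ descend to honest vanishing on the coherent side with only bounded $p$-power loss---is exactly the missing ingredient. Your framing of the obstacle is reasonable and broadly consistent with the authors' remark that the difficulty is tied to localization. But as written, what you have is a heuristic outline of where a proof would have to go, together with a correct identification of why it currently stops; it does not close the gap, and the conjecture remains open.
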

We do not know how to prove it unfortunately, and it is certainly related to the question of localizing $\mytau$ which we also do not know how to handle.

Alternatively, in mixed characteristic one can define the multiplier ideal $\mJ(A, \fra^t)$ valuatively.  This is equivalent to defining
\[
\mJ(A, \fra^t) = \bigcap_{Y \to X} \Gamma(Y, \O_Y(K_{Y/X} - \lfloor t G_Y \rfloor ))
\]
where $Y \to X := \Spec A$ runs over all proper birational maps with $Y$ normal and such that $\fra \cdot \O_Y = \O_Y(-G_Y)$.  Note it is also not clear whether this definition commutes with localization (since the intersection is infinite).  With this definition, we have the following.

\begin{theorem}
\label{thm.TauInMultiplier}
Suppose that $(A, \fra^t)$ is a pair, then
\[
\mytau(A, \fra^t) \subseteq \mJ(A, \fra^t).
\]
\end{theorem}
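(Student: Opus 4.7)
The plan is to fix an arbitrary $\pi\:Y\to X=\Spec A$ as in the definition of $\mJ(A,\fra^t)$ (so $Y$ is normal proper birational with $\fra\cdot \O_Y=\O_Y(-G_Y)$) and prove
\[
\mytau(A,\fra^t)\subseteq \Gamma\bigl(Y,\O_Y(K_{Y/X}-\lfloor tG_Y\rfloor)\bigr)
\]
for each such $Y$; the conclusion then follows by intersecting. Since \autoref{lem.BlowupContainmentGeneral} applies only when $Y$ is literally the normal blowup of some ideal $J$ with $\fra\subseteq\sqrt{J}$, the idea is to dominate the given $Y$ by such a normalized blowup $Y'\to Y$, apply \autoref{lem.BlowupContainmentGeneral} to $Y'$, and then push the containment of global sections from $Y'$ down to $Y$ using divisorial valuations.

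To construct $Y'$ I would first apply Chow's lemma to produce a projective birational $Y_1\to Y$ with $Y_1\to X$ also projective, which over the affine base $X$ forces $Y_1=\operatorname{Bl}_I X$ for some ideal $I\subseteq A$. Because $Y_1\to X$ factors through $Y$ and $\fra$ is already principalized on $Y$, the non-isomorphism locus of $Y_1\to X$ contains $V(\fra)$; since this locus sits inside $V(I)$, one obtains $\fra\subseteq \sqrt{I}$. Let $Y'$ be the normalization of $Y_1$, which by \autoref{lem.BlowupOfNormalizedIdealPower} can be written as $\operatorname{Bl}_J X$ for $J=\overline{I^n}$ with $n\gg 0$, and $\sqrt{J}=\sqrt{I}\supseteq \fra$. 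Now I would apply \autoref{lem.BlowupContainmentGeneral} to $\pi'\:Y'\to X$ with this $J$ and with $E=\lfloor tG_{Y'}\rfloor$: one checks $\pi'(\Supp E)\subseteq \pi'(\Supp G_{Y'})\subseteq V(\fra)\subseteq V(J)$, and for every $f\in \fra^{\lceil tp^e\rceil}$ the estimate $\Div_{Y'}(f)\geq \lceil tp^e\rceil G_{Y'}\geq p^e\lfloor tG_{Y'}\rfloor$ follows componentwise from $\lceil tp^e\rceil a\geq p^e\lfloor ta\rfloor$ for every $a\in \mathbb Z_{\geq 0}$. This yields $\mytau(A,\fra^t)\subseteq \Gamma(Y',\O_{Y'}(K_{Y'/X}-\lfloor tG_{Y'}\rfloor))$.

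For the descent step I would show
\[
\Gamma(Y',\O_{Y'}(K_{Y'/X}-\lfloor tG_{Y'}\rfloor))\subseteq \Gamma(Y,\O_Y(K_{Y/X}-\lfloor tG_Y\rfloor)),
\]
viewing both as subsets of $\Frac(A)$ and checking the defining inequality divisor by divisor. Given $s$ in the left-hand side and a prime divisor $P\subseteq Y$, let $\widetilde P\subseteq Y'$ be its strict transform (unique because $Y'\to Y$ is proper birational and both are normal). The divisorial valuations on $\Frac(A)$ attached to $P$ and $\widetilde P$ coincide, so $\ord_{\widetilde P}(s)=\ord_P(s)$; the coefficient of $\widetilde P$ in $K_{Y'/X}$ equals that of $P$ in $K_{Y/X}$ (both arise as the $\Div_\bullet$-order of a fixed generator of the free rank-one module $\omega_A$); and $G_{Y'}=(Y'\to Y)^{*}G_Y$ gives the same equality for the coefficients of $\lfloor tG_{Y'}\rfloor$ and $\lfloor tG_Y\rfloor$. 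Hence the inequality at $\widetilde P$ transfers to the inequality at $P$, completing the argument. The main obstacle in this plan is the reduction step — arranging $Y$ to be dominated by a normalized blowup of an ideal whose radical contains $\fra$ — and the key input is that Chow's lemma combined with the fact that blowups of an affine scheme have non-isomorphism locus inside $V(I)$ automatically captures $V(\fra)$ once $\fra$ is principalized downstairs; the application of \autoref{lem.BlowupContainmentGeneral} and the divisorial-valuation descent are then formal.
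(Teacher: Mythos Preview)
Your reduction step contains a genuine gap. The deduction ``the non-isomorphism locus of $Y_1\to X$ contains $V(\fra)$; since this locus sits inside $V(I)$, one obtains $\fra\subseteq\sqrt{I}$'' fails on two counts. First, the premise that the non-isomorphism locus contains $V(\fra)$ is unjustified and false in general: if $\fra$ is locally principal at some point of $V(\fra)$ there is no reason $Y_1\to X$ must fail to be an isomorphism there (for $\fra=(f)$ principal one may take $Y=X=Y_1$, with empty non-isomorphism locus). Second, even granting the premise, the containment $V(\fra)\subseteq V(I)$ yields $I\subseteq\sqrt{\fra}$, not $\fra\subseteq\sqrt{I}$; you have the implication reversed. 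Consequently the hypothesis $\fra\subseteq\sqrt{J}$ of \autoref{lem.BlowupContainmentGeneral} is never established. Note also that your later check $\pi'(\Supp E)\subseteq V(J)$ uses $V(\fra)\subseteq V(J)$, which is the \emph{opposite} inclusion to $\fra\subseteq\sqrt{J}$; you would in effect need $\sqrt{J}=\sqrt{\fra}$, and nothing in the argument forces both directions.

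The paper avoids this by working valuatively rather than trying to dominate the given $Y$ by a single blowup. It observes that a prime divisor on $Y$ whose center lies outside $V(\fra)$ carries a nonpositive coefficient in $\lfloor tG_Y\rfloor-K_{Y/X}$ (since $K_{Y/X}\geq 0$ and $G_Y$ vanishes there), and so imposes no condition on elements of $A$; thus only divisorial valuations centered in $V(\fra)$ matter. For each such valuation $v$ the paper invokes Artin and Abhyankar to realize $v$ on a model obtained by repeatedly blowing up the center of $v$; these blowups are isomorphisms outside $V(\fra)$, so the resulting model is the blowup of some $J$ with $V(J)\subseteq V(\fra)$, i.e.\ $\fra\subseteq\sqrt{J}$, and \autoref{lem.BlowupContainmentGeneral} then applies. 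Your descent step via strict transforms is correct and is essentially the paper's remark that the coefficient of $\lfloor tG_Y\rfloor-K_{Y/X}$ along a divisor depends only on the underlying valuation; the point you underestimated is precisely that arranging the blowup ideal to satisfy $\fra\subseteq\sqrt{J}$ genuinely requires discarding the divisors centered outside $V(\fra)$, which the valuative viewpoint accomplishes and a single domination of $Y$ does not.
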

\begin{proof}
This will follow from \autoref{lem.BlowupContainmentGeneral} but we must argue that we only need consider projective birational $\pi : Y \to X$ that are blowups of some ideal $J$ such that $\fra \subseteq \sqrt{J}$.  Moving from proper to projective maps (and hence blowups) is simply Chow's Lemma.  Since $A$ is regular local and hence a UFD, one may always choose $J$ so that $V(J)$ is the locus over which $\pi$ is not an isomorphism (\cf \cite[Chapter II, Exercise 7.11(c)]{Hartshorne}).  Thus it suffices to show that we can restrict our attention to $\pi$ that are an isomorphism outside of $V(\fra)$.

To do this, first notice that our multiplier ideal can also be written as
\begin{equation}
\label{eq.ValuativeDescriptionOfMultiplierIdeal}
\mJ(A, \fra^t) = \{ f \in A \;|\; \text{for all $\pi : Y \to X$ as above, } \Div_Y(f) \geq \lfloor t G_Y \rfloor - K_{Y/X} \}.
\end{equation}
Next consider an arbitrary projective birational map $\pi: Y \to X$ with $Y$ normal.  Fix some prime divisor $D$ on $Y$ with corresponding discrete valuation $v$ and set $W = \pi(E)$ to be the center of $v$.  If $W$ is \emph{not} contained in $V(\fra)$, then the coefficient of $\lfloor t G_Y \rfloor - K_{Y/X}$ along $E$ is negative (since $K_{Y/X}$ is effective, see the proof of \autoref{lemma--asymptotic test ideal is contained in Q}), and so $E$ imposes no condition on the elements $f$ which make up $\mJ(A, \fra^t)$.  Thus, we only need to consider $E$ whose centers are contained in $V(\fra)$.  Next observe that the $E$ coefficient of $\lfloor t G_Y \rfloor - K_{Y/X}$ only depends on the valuation $v$, it does \emph{not} depend on the particular choice of $Y$.  Therefore, it suffices to show that if $v$ is a divisorial valuation whose center is contained in $V(\fra)$, then there is a projective birational map $\pi : Y \to X$, with $Y$ normal and $\pi$ an isomorphism outside of $V(\fra)$ such that the valuation ring of $v$ is the local ring of some prime divisor on $Y$.

However, \cite[Section 5]{ArtinNeronModels} (see also \cite[Proposition 3]{AbhyankarValuationsCentered}), shows that by repeatedly blowing up the center of the valuation $v$ one eventually obtains a birational model which realizes the valuation ring of $v$ as the localization at a generic point of a prime divisor.  Compositions of such blowups are isomorphisms away from the center of $v$, which is contained in $V(\fra)$.

The Theorem then follows by \autoref{lem.BlowupContainmentGeneral}.
\end{proof}

\section{Asymptotic perfectoid test ideals}

We let $\{\fra_n\}_{n=1}^\infty$ be a graded sequence of ideals, i.e., $\fra_n\fra_m\subseteq\fra_{m+n}$ for all $m,n$.  By analogy with for instance \cite{EinLazSmithSymbolic}, it would be natural to define the $n$-th asymptotic perfectoid test ideal of the graded sequence $\{\fra_n\}$ as the ideal
\[
\mytau_\infty(\fra_n)=\sum \mytau(\fra_{ln}^{\frac{1}{l}}).
\]
However, since we don't know the subadditivity theorem for $\mytau(\fra^t)$, this version of asymptotic test ideal will not give us the desired result on symbolic powers of ideals. Instead we have to build the definition using $\mytau([\underline{f}]^t)$. Now since everything depends on the choice of the generating set $\{\underline{f}\}$ and a choice of compatible system of $p$-power roots of $\{\underline{f}\}$, we must proceed carefully.

Let $\{\fra_n\}_{n=1}^\infty$ be a graded sequence of ideals of $A$. We will be mostly interested in the situation that $\fra_n=I^{(n)}$, the $n$-th symbolic power of a radical ideal $I\subseteq A$. We define a generating set of each $\fra_n$ inductively as follows:

First, we let $\{\underline{f}_{(1)}\}=\{f_{(1),1},\dots, f_{(1),n_1}\}$ be a fixed generating set of $\fra_1$, and we also fix a compatible system of $p$-power roots for each $f_{(1),i}$ in $A_\infty$ (that we will use to define $\mytau([\underline{f}_{(1)}]^t)$). Now, suppose that a generating set $\{\underline{f}_{(s)}\}$ of $\fra_s$ and a compatible system of $p$-power roots of $\underline{f}_{(s)}$ have been chosen for all $s<m$. We let $$\{\underline{f}_{(m)}\}=\{f_{(m),1},\dots, f_{(m),n_m}\}$$ be a generating set of $\fra_m$ satisfying the condition that it contains all possible $f_{(s),i}f_{(t),j}$, where $s+t=m$ and $f_{(s),i}$, $f_{(t),j}$ are part of the chosen generating set of $\fra_s$ and $\fra_t$. Moreover, we fix a compatible system of $p$-power roots for each $f_{(m),k}$, such that, if $f_{(m),k}=f_{(s),i}f_{(t),j}$, then we use the product of the compatible system of $p$-power roots of $f_{(s),i}$ and $f_{(t),j}$, i.e., we let $${f_{(m),k}}^{1/p^e}={f_{(s),i}}^{1/p^e}{f_{(t),j}}^{1/p^e}.$$ It might happen that $f_{(s),i}f_{(t),j}=f_{(m),k}=f_{(s'),i'}f_{(t'),j'}$ for $s+t=m=s'+t'$, but the product of the chosen compatible system of $p$-power roots of $f_{(s),i}$ and $f_{(t),j}$ is not the same as the product of the chosen compatible system of $p$-power roots of $f_{(s'),i'}$ and $f_{(t'),j'}$. In this case, we simply allow $f_{(m),k}$ to appear multiple times in the generating set, but we use different compatible system of $p$-power roots, one coming from $f_{(s),i}f_{(t),j}$ and the other coming from $f_{(s'),i'}f_{(t'),j'}$.

We have defined a generating set $\{\underline{f}_{(m)}\}$ for each $\fra_m$ in the graded sequence, as well as a compatible system of $p$-power roots for each element $f_{(m),i}$ appearing in the generating set. Now we give our definition of asymptotic perfectoid test ideal.

\begin{definition}[Asymptotic perfectoid test ideals]
\label{definition--asympototic mixed}
We define
\[
\mytau_\infty([\underline{f}_{(n)}])=\sum_{l=1}^\infty \mytau([\underline{f}_{(ln)}]^{1/l}).
\]
\end{definition}

By \autoref{proposition--test ideals of powers-GensVersion},
$$\mytau([\underline{f}_{(ln)}]^{\frac{1}{l}})=\mytau([\underline{f}_{(ln)}^{\bullet m}]^{\frac{1}{ml}})$$
where as in the notation of \autoref{proposition--test ideals of powers-GensVersion}, $\underline{f}_{(ln)}^{\bullet m}$ denotes the set of all degree $m$ monomials in $\{\underline{f}_{(ln)}\}=\{f_{(ln),1},\dots, f_{(ln),n_{ln}}\}$. More importantly, by our choice of the generating set $\{\underline{f}_{(mln)}\}$ (and the way we fix the compatible system of $p$-power roots of elements in the generating set), we have
\[
\mytau([\underline{f}_{(ln)}^{\bullet m}]^{\frac{1}{ml}})\subseteq \mytau([\underline{f}_{(mln)}]^{\frac{1}{ml}})
\]
by \autoref{prop.EasyContainments}. Therefore we have
\[
\mytau_\infty([\underline{f}_{(n)}])=\mytau([\underline{f}_{(ln)}]^{1/l}) \text{ for all sufficiently large and divisible $l$}.
\]

\begin{proposition}
\label{proposition--subadditivity for asymptotic}
We have $\mytau_\infty([\underline{f}_{(mn)}])\subseteq \mytau_\infty([\underline{f}_{(n)}])^m$ for all $n, m\in \mathbb{N}$.
\end{proposition}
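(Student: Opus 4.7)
The plan is to reduce this to a single application of the subadditivity theorem (\autoref{theorem--subadditivity}) after matching up the right ``witnesses'' in the two asymptotic test ideals. Recall that immediately after \autoref{definition--asympototic mixed} it was observed that
\[
\mytau_\infty([\underline{f}_{(N)}])=\mytau([\underline{f}_{(lN)}]^{1/l}) \quad \text{for all sufficiently large and divisible } l,
\]
so the two asymptotic test ideals in the statement can be computed simultaneously from a common level.

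Concretely, I would first choose a positive integer $k$ that is large and divisible enough so that both
\[
\mytau_\infty([\underline{f}_{(mn)}])=\mytau([\underline{f}_{(kmn)}]^{1/k}) \quad\text{and}\quad \mytau_\infty([\underline{f}_{(n)}])=\mytau([\underline{f}_{(kmn)}]^{1/(km)})
\]
hold. (The first uses level $k$ for the graded sequence $\{\fra_{mn}\}$; the second uses level $km$ for $\{\fra_n\}$, viewing $kmn$ as $(km)\cdot n$. Because the definition of $\mytau_\infty$ only sums over sufficiently large and divisible $l$, a simultaneous $k$ exists.) The key point is that we have rewritten both asymptotic ideals using the \emph{same} generating set $\{\underline{f}_{(kmn)}\}$ of $\fra_{kmn}$ together with its fixed compatible systems of $p$-power roots.

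The next step is to apply the second conclusion of \autoref{theorem--subadditivity} (i.e., the inclusion \eqref{equation--subadditivity} written as $\mytau([\underline{f}]^{ts})\subseteq \mytau([\underline{f}]^t)^s$) to $\underline{f}=\underline{f}_{(kmn)}$, $t=1/(km)$, $s=m$. This gives
\[
\mytau([\underline{f}_{(kmn)}]^{1/k})=\mytau([\underline{f}_{(kmn)}]^{m\cdot 1/(km)})\subseteq \mytau([\underline{f}_{(kmn)}]^{1/(km)})^{m}.
\]
Substituting the two identifications from the previous paragraph yields $\mytau_\infty([\underline{f}_{(mn)}])\subseteq \mytau_\infty([\underline{f}_{(n)}])^m$, as desired.

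The only potential pitfall is making sure that the compatible systems of $p$-power roots used on both sides are consistent, since $\mytau([\underline{f}]^t)$ genuinely depends on these choices. But this is handled automatically by our setup: both asymptotic test ideals are computed from the single generating set $\{\underline{f}_{(kmn)}\}$ with its once-and-for-all fixed compatible systems, so the invocation of subadditivity is literally with the same data $[\underline{f}]=[\underline{f}_{(kmn)}]$. Thus the only real content is subadditivity plus the asymptotic stabilization, and no further argument is needed.
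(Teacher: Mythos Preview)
Your proof is correct and essentially identical to the paper's own argument: both choose a common level (your $k$, the paper's $l$) at which the two asymptotic test ideals stabilize to $\mytau([\underline{f}_{(kmn)}]^{1/k})$ and $\mytau([\underline{f}_{(kmn)}]^{1/(km)})$, and then apply the subadditivity inclusion \eqref{equation--subadditivity}. The only cosmetic difference is that the paper makes the intermediate equality $\mytau([\underline{f}_{(lmn)}]^{1/l})=\mytau([\underline{f}_{(lmn)}^{\bullet m}]^{1/(ml)})$ from \autoref{proposition--test ideals of powers-GensVersion} explicit before invoking subadditivity, whereas you cite the combined statement \eqref{equation--subadditivity} directly.
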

\begin{proof}
By the above discussion, we can choose $l$ sufficiently large and divisible such that $\mytau_\infty([\underline{f}_{(mn)}])=\mytau([\underline{f}_{(lmn)}]^{\frac{1}{l}})$ and $\mytau_\infty([\underline{f}_{(n)}])=\mytau([\underline{f}_{(lmn)}]^{\frac{1}{ml}})$. Now we have
\[
\mytau_\infty([\underline{f}_{(mn)}])=\mytau([\underline{f}_{(lmn)}]^{\frac{1}{l}})=\mytau([\underline{f}_{(lmn)}^{\bullet m}]^{\frac{1}{ml}})
\subseteq\mytau([\underline{f}_{(lmn)}]^{\frac{1}{ml}})^m = \mytau_\infty([\underline{f}_{(n)}])^m
\]
where the second equality follows from \autoref{proposition--test ideals of powers-GensVersion}, and the only inclusion is by the subadditivity property \autoref{theorem--subadditivity}. This completes the proof.
\end{proof}

\begin{theorem}
\label{thm.SymbolicContainmentCompleteLocalMixed}
Let $I\subseteq A$ be a radical ideal such that each prime component has height $\leq h$. Then we have $I^{(hn)}\subseteq I^n$ for all $n\in \mathbb{N}$.
\end{theorem}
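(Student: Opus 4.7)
My plan is to mimic the asymptotic multiplier ideal strategy of Ein--Lazarsfeld--Smith, using the asymptotic perfectoid test ideal machinery just developed. Set $\fra_n := I^{(n)}$. Since $I$ is radical, this forms a graded sequence, so that \autoref{definition--asympototic mixed} applies: we fix generating sets $\{\underline{f}_{(n)}\}$ of $I^{(n)}$ and compatible systems of $p$-power roots as prescribed in the construction preceding that definition, and form $\mytau_\infty([\underline{f}_{(n)}])$.

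The proof then assembles four chained containments:
\[
I^{(hn)} \;\subseteq\; \mytau([\underline{f}_{(hn)}]) \;\subseteq\; \mytau_\infty([\underline{f}_{(hn)}]) \;\subseteq\; \mytau_\infty([\underline{f}_{(h)}])^n \;\subseteq\; I^n.
\]
The first containment is immediate from \autoref{proposition--test ideal contains the original ideal}, which says that any ideal is contained in the perfectoid test ideal of any of its generating sets, applied to the fixed generating set $\{\underline{f}_{(hn)}\}$ of $I^{(hn)}$. The second containment is just the $l=1$ summand in \autoref{definition--asympototic mixed}. The third containment is \autoref{proposition--subadditivity for asymptotic} applied with $n\mapsto h$ and $m\mapsto n$; this is where the whole subadditivity apparatus from \autoref{theorem--subadditivity} and our careful bookkeeping of compatible roots in the construction of $\{\underline{f}_{(\bullet)}\}$ pays off.

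The real content is in the fourth containment, $\mytau_\infty([\underline{f}_{(h)}]) \subseteq I$. By the discussion after \autoref{definition--asympototic mixed}, $\mytau_\infty([\underline{f}_{(h)}]) = \mytau([\underline{f}_{(lh)}]^{1/l})$ for all sufficiently large and divisible $l$. For any such $l$, $\{\underline{f}_{(lh)}\}$ is a generating set of $I^{(lh)}$, so by \autoref{equation--easy containment},
\[
\mytau([\underline{f}_{(lh)}]^{1/l}) \;\subseteq\; \mytau\bigl((I^{(lh)})^{1/l}\bigr).
\]
Now $I$ has all prime components of height $\leq h$, so \autoref{lemma--asymptotic test ideal is contained in Q} gives $\mytau((I^{(lh)})^{1/l}) \subseteq I$, which is exactly the bound we need.

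The main obstacle is really already absorbed into \autoref{lemma--asymptotic test ideal is contained in Q} (via the blowup/perfectoid comparison in \autoref{lem.BlowupContainmentGeneral}) and \autoref{theorem--subadditivity}; the remaining subtlety at this stage is purely bookkeeping, namely making sure the chosen generating sets and $p$-power root systems for the $I^{(n)}$ are compatible enough that the subadditivity inclusion genuinely applies. This is precisely what the inductive choice of $\{\underline{f}_{(m)}\}$ (with the convention that $f_{(s),i}f_{(t),j}$ appears in $\{\underline{f}_{(s+t)}\}$, using the product of the chosen root systems) was designed to guarantee, and it is what forces us to work with $\mytau([\underline{f}]^t)$ rather than $\mytau(\fra^t)$ throughout.
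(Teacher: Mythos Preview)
Your proposal is correct and follows essentially the same argument as the paper: the same chain $I^{(hn)} \subseteq \mytau([\underline{f}_{(hn)}]) \subseteq \mytau_\infty([\underline{f}_{(hn)}]) \subseteq \mytau_\infty([\underline{f}_{(h)}])^n \subseteq I^n$, justified by the same references (\autoref{proposition--test ideal contains the original ideal}, \autoref{definition--asympototic mixed}, \autoref{proposition--subadditivity for asymptotic}, \autoref{equation--easy containment}, \autoref{lemma--asymptotic test ideal is contained in Q}). Your commentary on why the $[\underline{f}]$-version is needed rather than the $\fra$-version is also on point.
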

\begin{proof}
Let $\{\fra_n\}=\{I^{(n)}\}$ be the graded sequence of ideals. We select a generating set $\{\underline{f}_{(n)}\}$ for each $\fra_n$ in this graded sequence as well as a compatible system of $p$-power roots for each element $f_{(n),i}$ appearing in the generating set as in the discussion before \autoref{definition--asympototic mixed}, and we form the asymptotic perfectoid test ideal $\mytau_\infty([\underline{f}_{(n)}])$ as in \autoref{definition--asympototic mixed}.

Since $\{\underline{f}_{(hn)}\}$ is a generating set of $\fra_{hn}=I^{(hn)}$, by \autoref{proposition--test ideal contains the original ideal} we have $$I^{(hn)}=\fra_{hn}\subseteq \mytau([\underline{f}_{(hn)}])\subseteq\mytau_\infty([\underline{f}_{(hn)}])$$
where the last containment follows from \autoref{definition--asympototic mixed}. But by \autoref{proposition--subadditivity for asymptotic}, we know that \[
I^{(hn)}\subseteq\mytau_\infty([\underline{f}_{(hn)}])\subseteq \mytau_\infty([\underline{f}_{(h)}])^n
\]
for all $n$. Therefore we are done if we can show that $\mytau_\infty([\underline{f}_{(h)}])\subseteq I$. However, $$\mytau_\infty([\underline{f}_{(h)}])=\mytau([\underline{f}_{(lh)}]^{1/l})$$
for some sufficiently large and divisible $l$, and since $\{\underline{f}_{(lh)}\}$ is a generating set for $\fra_{lh}=I^{(lh)}$, we know from \autoref{equation--easy containment} and \autoref{lemma--asymptotic test ideal is contained in Q} that
$$\mytau([\underline{f}_{(lh)}]^{1/l})\subseteq \mytau((I^{(lh)})^{1/l})\subseteq I,$$
This finishes the proof.
\end{proof}

\begin{theorem}
\label{thm.SymbolicContainmentInGeneral}
Let $R$ be a Noetherian regular ring with reduced formal fibers ({\itshape e.g.} $R$ is excellent) and let $I \subseteq R$ be a radical ideal such that each minimal prime of $I$ has height $\leq h$.  Then for every integer $n > 0$,
\[
I^{(hn)} \subseteq I^n.
\]
\end{theorem}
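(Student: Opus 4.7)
The strategy is to reduce to the complete local case that has already been settled: \autoref{thm.SymbolicContainmentCompleteLocalMixed} handles mixed characteristic, while Ein--Lazarsfeld--Smith \cite{EinLazSmithSymbolic} and Hochster--Huneke \cite{HochsterHunekeComparisonOfSymbolic} handle equal characteristic. First, the containment $I^{(hn)} \subseteq I^n$ may be checked after localizing at each maximal ideal $\fram$ of $R$; when $\fram$ does not contain $I$ the statement is trivial, so one may assume $(R,\fram)$ is regular local and $I \subseteq \fram$. Symbolic powers commute with localization, and the bound on heights of minimal primes is preserved.

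Next I would pass to the $\fram$-adic completion $\widehat{R}$. Because $R$ has reduced formal fibers, for every minimal prime $Q$ of $I$ the fiber $\widehat{R} \otimes_R \kappa(Q)$ is reduced. This forces $Q\widehat{R}$ to have no embedded associated primes, and its minimal primes $P_1, \dots, P_s$ to all have height equal to $\height(Q) \leq h$. Combining flatness with this reducedness one deduces that
\[
Q^{(m)}\widehat{R} \;=\; \bigcap_{i=1}^s P_i^{(m)} \qquad \text{for every } m,
\]
by writing $Q^{(m)} = Q^m R_Q \cap R$ and applying flat base change along $R_Q \to \widehat{R}_Q$. Intersecting over the finitely many minimal primes of $I$ then yields $I^{(hn)}\widehat{R} = (I\widehat{R})^{(hn)}$, with $I\widehat{R}$ radical and its minimal primes of height at most $h$.

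Finally, $\widehat{R}$ is either mixed characteristic $(0,p)$, in which case \autoref{thm.SymbolicContainmentCompleteLocalMixed} applies, or equal characteristic, in which case \cite{EinLazSmithSymbolic, HochsterHunekeComparisonOfSymbolic} applies. Either way $(I\widehat{R})^{(hn)} \subseteq (I\widehat{R})^n$, and faithful flatness of $R \to \widehat{R}$ gives
\[
I^{(hn)} \;\subseteq\; I^n \widehat{R} \cap R \;=\; I^n,
\]
as required.

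The main obstacle is the compatibility $I^{(hn)}\widehat{R} = (I\widehat{R})^{(hn)}$; this is precisely where the reduced formal fibers hypothesis is needed, since without it $Q\widehat{R}$ could acquire embedded components or minimal primes of larger height, either of which would invalidate the identification and break the reduction.
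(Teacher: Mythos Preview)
Your proof is correct and follows essentially the same reduction as the paper: localize, pass to the completion using reduced formal fibers to keep $I\widehat{R}$ radical with minimal primes of height $\leq h$, invoke \autoref{thm.SymbolicContainmentCompleteLocalMixed} or \cite{EinLazSmithSymbolic,HochsterHunekeComparisonOfSymbolic}, and descend by faithful flatness. The paper only uses the containment $I^{(hn)}\widehat{R}\subseteq (I\widehat{R})^{(hn)}$ rather than the equality you establish, and the height bound on minimal primes of $I\widehat{R}$ actually follows from flatness alone (going-down plus the dimension formula), so the reduced-fiber hypothesis is needed solely to ensure $I\widehat{R}$ is radical---your final remark slightly overstates its role.
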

\begin{proof}
Since the formation of symbolic powers commutes with localization, it is enough to prove $I^{(hn)} \subseteq I^n$ after localizing at each prime ideal of $R$ and so we may assume that $R$ is local. Since $I^{(hn)}\widehat{R}\subseteq (I\widehat{R})^{(hn)}$ and $I\widehat{R}$ is still a radical ideal, see \cite[7.6.7(ii)]{EGA_IV_II}, and each minimal prime has height $\leq h$, see \cite[7.1.10]{EGA_IV_II} or \cite[Corollary on page 251]{MatsumuraCommutativeRingTheory}. If we can show that $(I\widehat{R})^{(hn)}\subseteq (I\widehat{R})^n=I^n\widehat{R}$, then it follows that $I^{(hn)}\subseteq I^n\widehat{R}\cap R=I^n$. Hence we may assume that $A := \widehat{R}$ is a complete regular local ring. In the case that $A$ is of equal characteristic, the result is already known by \cite[Theorem 1.1 (a)]{HochsterHunekeComparisonOfSymbolic} (also see \cite[Theorem A]{EinLazSmithSymbolic}). If $A$ has mixed characteristic, then we are done by \autoref{thm.SymbolicContainmentCompleteLocalMixed}. This completes the proof.
\end{proof}

\section{An example}

We can compute this perfectoid test ideal in a simple case.

\begin{example}[SNC pair]
\label{ex.SNCPair}
Consider $A = W(k)\llbracket x_1, \ldots, x_{d-1}\rrbracket$ for $k$ some perfect field of characteristic $p > 0$ and let $f = p^{a_0} x_1^{a_1} x_{d-1}^{a_2} \cdots x_{d}^{a_{d-1}}$ for some integers $a_i$.  Suppose that $A_{\infty}$ contains a fixed copy of $A_0 := A[p^{1/p^\infty}, x_1^{1/p^\infty},\dots, x_{d-1}^{1/p^\infty}]$, this follows for instance if one constructs $A_{\infty}$ via the $R$ from \cite[Proposition 5.2]{BhattDirectsummandandDerivedvariant} as stated in
\autoref{lem.BhattAinfty0}.  In particular for our compatible system of $p$-power roots of $f$, we fix the ones given in $A_{0}$ via products of roots of monomials.

We claim
\[
\mytau(A, [f]^t) = ( p^{\lfloor a_0 t \rfloor} \cdot x_1^{\lfloor a_1 t \rfloor} \cdots x_{d-1}^{\lfloor a_{d-1} t \rfloor} ).
\]
Since in the definition of $\mytau([f]^t)$, we are building the $+\epsilon$ variant of the perfectoid test ideal, we may work with a fixed $t + \epsilon = b/p^e$.  Consider $A' = A[p^{1/p^e}, x_1^{1/p^e}, \dots, x_{d-1}^{1/p^e}]$ and observe it is also regular and contains $f^{b/p^e}$.

Since $A'\subseteq A_{0}$, we have a factorization
\[
A \to A' \to A_{\infty}.
\]
Also note that we can factor the map $A \xrightarrow{\cdot f^{b/p^e}} A_{\infty}$ as
\[
A \to A' \xrightarrow{\cdot f^{b/p^e}} A' \to A_{\infty}
\]
Since $A'$ is regular, we have by \autoref{lem.AlmostKernel=0} that $$0 = \{ \eta \in H^d_{\fram}(A') \;|\; 0 = p^{1/p^{\infty}} \eta \in H^d_{\fram}(A_{\infty}) \}.$$
By using this and the argument of \autoref{cor.TestIdealOfPrincipal}, we have that
\[
\{ \eta \in H^d_{\fram}(A') \;|\; f^{b/p^e} \eta = 0 \in H^d_{\fram}(A_{\infty}) \}
= \{ \eta \in H^d_{\fram}(A') \;|\; p^{1/p^{\infty}} f^{b/p^e} \eta = 0 \in H^d_{\fram}(A_{\infty}) \}.
\]
It then follows from local duality that
\[
\mytau(A, [f]^{t})=\mytau^\sharp(A, [f]^{t + \epsilon}) = \Phi(f^{t+\epsilon} A')
\]
where $\Phi$ is the generator of $\Hom_A(A', A)$ as an $A'$-module (for example, it can be taken to be the map which sends the monomial basis $p^{a_0/p^e} x_1^{a_1/p^e} \cdots x_{d-1}^{a_{d-1}/p^e} \mapsto p^{(a_0 - p^e + 1)/p^e} x_1^{(a_1 - p^e + 1)/p^e} \cdots x_{d-1}^{(a_{d-1}-p^e + 1)/p^e}$ if that term makes sense in $A$ and zero otherwise).  But this image is precisely $( p^{\lfloor a_0 t \rfloor} \cdot x_1^{\lfloor a_1 t \rfloor} \cdots x_{d-1}^{\lfloor a_{d-1} t \rfloor} )$ as desired (at this point, it is the same computation as the one for the test ideal).
\end{example}

\section{Further questions}
\label{sec.FurtherQuestions}

We record some open questions regarding the results herein.
\begin{question}
Fix $\{ f_1, \ldots, f_n\}$ a sequence of generators of an ideal $\fra \subseteq A$ and for each $f_i$ fix a compatible system of $p$-power roots of $f_i$  in $A_\infty$ (in order to define $\mytau^\sharp([f_1, \ldots, f_n]^t)$). Is any inclusion in \autoref{equation--easy containment}:
$$\mytau^\sharp(\fra^t)\supseteq \mytau^\sharp([f_1, \ldots, f_n]^t)\supseteq \mytau([f_1, \ldots, f_n]^t)\subseteq \mytau(\fra^t)\subseteq \mytau^\sharp(\fra^t)$$
an equality?
\end{question}


Another fundamental question left open in this paper is:

\begin{question}
Is $\mytau([\underline{f}]^t)$ or $\mytau(\fra^t)$ independent of the choice of $A_{\infty}$?  
\end{question}



We also ask how our object behaves with respect to localization.  Note that it is still an open question whether or not the formation of the classical characteristic $p > 0$ test ideal commutes with localization.

\begin{question}
If $Q \in \Spec A$ is a prime containing $p$ and $\fra$, is it true that
$$\mytau(A, \fra^t) \cdot \widehat{A_Q} = \mytau(\widehat{A_Q}, (\fra \widehat{A_Q})^t) ?$$
$$\mytau(A, [\underline{f}]^t)\cdot \widehat{A_Q} = \mytau(\widehat{A_Q}, [\underline{f}]^t)? $$
\end{question}









\appendix
\section{Blowups}

In this appendix we briefly recall (and in some cases prove) facts about blowups of ideals.  These are well known but we record them here for ease of the reader.  Note, we are working with potentially non-Noetherian rings in most cases.

\begin{setting}
\label{set.Blowup}
Throughout this section, $R$ will be a reduced ring and $J \subseteq R$ will be a \emph{finitely generated} ideal.  We let $X = \Spec R$ and let $Y \to X$ be the blowup of $J$ in $X$.  In particular, set $S = R \oplus JT \oplus JT^2 \oplus \dots$ where the $T$ serve as a dummy variable to help distinguish degree, and thus $Y = \Proj S$.
\end{setting}

\begin{lemma}
If $J = ( z_1, \ldots, z_m )$, then the complements $U_i$ of $V(z_i T) \subseteq Y$ form an affine cover of $Y$ with $U = \Spec R[z_1/z_i, \ldots, z_m/z_i]$.
\end{lemma}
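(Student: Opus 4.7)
The plan is to recognize each $U_i$ as the standard basic open $D_+(z_iT) \subseteq \Proj S$ associated to the degree-one homogeneous element $z_iT \in S_1$, and then compute the degree-zero part of the localization $S_{z_iT}$.

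First, I would verify that the $U_i$ cover $Y$. Since $J = (z_1, \ldots, z_m)$, the power $J^n$ is generated as an $R$-module by the degree-$n$ monomials in $z_1, \ldots, z_m$, so in $S = R \oplus JT \oplus J^2T^2 \oplus \cdots$ the irrelevant ideal $S_+ = \bigoplus_{n \geq 1} J^nT^n$ is generated as an $S$-ideal by the degree-one elements $z_1T, \ldots, z_mT$. Consequently $V_+(z_1T, \ldots, z_mT) = V_+(S_+) = \emptyset$ in $\Proj S$, which is precisely the statement $\bigcup_i U_i = Y$.

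Second, I would identify $U_i = D_+(z_iT) = \Spec\bigl((S_{z_iT})_0\bigr)$ with $\Spec R[z_1/z_i, \ldots, z_m/z_i]$. A general degree-zero element of $S_{z_iT}$ has the form $(aT^n)/(z_iT)^n$ with $a \in J^n$; writing $a$ as an $R$-linear combination of degree-$n$ monomials $z_{j_1}\cdots z_{j_n}$ expresses such an element as an $R$-polynomial in the fractions $z_j/z_i$. Thus there is a natural surjection $R[z_1/z_i, \ldots, z_m/z_i] \twoheadrightarrow (S_{z_iT})_0$ sending $z_j/z_i \mapsto (z_jT)/(z_iT)$, and conversely a map $(S_{z_iT})_0 \to R[z_1/z_i, \ldots, z_m/z_i]$ sending $(aT^n)/(z_iT)^n \mapsto a/z_i^n$. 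These maps are manifestly mutually inverse.

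The only step requiring a little care is checking well-definedness and injectivity of these maps on the nose (as opposed to merely up to $z_i$-torsion), since we make no Noetherian hypothesis on $R$. The cleanest way is to observe that both $R[z_1/z_i, \ldots, z_m/z_i]$ and $(S_{z_iT})_0$ embed canonically into the common overring $R_{z_i}[T, T^{-1}]$ via the evident inclusions, and the two identifications above match the restrictions of those embeddings, so the bijection is forced. I do not anticipate any substantial obstacle here -- the lemma is purely a standard restatement of the $\Proj$ construction and the form of the Rees algebra.
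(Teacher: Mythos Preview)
Your proposal is correct and follows essentially the same route as the paper. The paper's own proof is a one-line citation: it observes that any relevant homogeneous prime of $S$ must miss some $z_iT$ (your covering argument), and then invokes \cite[Tag 0804]{stacks-project} for the identification $D_+(z_iT) \cong \Spec (S_{z_iT})_0$. In fact, the paper preempts the entire second half of your argument by \emph{declaring} (in the sentence immediately preceding the lemma, citing \cite[Tag 052P]{stacks-project}) that $R[z_1/z_i,\ldots,z_m/z_i]$ is to be read as the subring of $S[(z_iT)^{-1}]$ consisting of elements $gT^n/(z_iT)^n$; with that convention the identification with $(S_{z_iT})_0$ is tautological, so your careful construction of mutually inverse maps and the embedding into $R_{z_i}[T,T^{-1}]$ are simply spelling out what the paper takes as notation.
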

In the above $R[z_1/z_i, \ldots, z_m/z_i]$ is viewed as the subring of elements of $S[ (z_i T)^{-1} ]$ of the form $g T^n / (z_i T)^n$ as in \cite[Tag 052P]{stacks-project}.
\begin{proof}
Note any homogeneous prime of $S$ does not contain some $z_i$ and so this follows from for instance \cite[Tag 0804]{stacks-project}.
\end{proof}


\begin{lemma}
\label{lem.IntegralElementPartialNormalization}
Suppose that $f \in R$ is integral over $J$.  Define $J' = J + (f )$ and let $Y' \to X$ be the blowup of $J'$.  Then $Y' \to X$ factors through $Y$ and $Y'$ is a partial normalization of $Y$ generated locally by adding a single integral element to the rings defining the affine charts $U_i$.
\end{lemma}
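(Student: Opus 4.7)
The plan is to work out explicitly the affine chart structure of the blowup $Y'$ of $J' = (z_1, \ldots, z_m, f)$ and compare it to that of $Y$. Write the integrality of $f$ over $J$ as a monic relation
\[
f^n + a_1 f^{n-1} + \cdots + a_n = 0, \qquad a_k \in J^k.
\]
The blowup $Y' = \Proj \bigoplus_{n\geq 0} (J')^n T^n$ has $m+1$ standard affine charts: for $1 \leq i \leq m$,
\[
U'_i = \Spec R[z_1/z_i, \ldots, z_m/z_i, f/z_i],
\]
and $U'_{m+1} = \Spec R[z_1/f, \ldots, z_m/f]$. The structure map $U'_i \to U_i$ for $i \leq m$ is plainly obtained by adjoining the single element $f/z_i$ to the coordinate ring of $U_i$. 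Dividing the monic equation above by $z_i^n$ gives $(f/z_i)^n + (a_1/z_i)(f/z_i)^{n-1} + \cdots + a_n/z_i^n = 0$, and each $a_k/z_i^k$ lies in $R[z_1/z_i,\ldots,z_m/z_i]$ since $a_k \in J^k$ is a sum of products of $k$ elements from $\{z_1,\ldots,z_m\}$. Thus $f/z_i$ is integral over $\O_Y(U_i)$, which is the partial normalization claim.

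Next I would dispose of the extra chart $U'_{m+1}$. Dividing the integral equation by $f^n$ yields $1 + a_1/f + \cdots + a_n/f^n = 0$, and the key observation is that $a_k/f^k$ is a degree $k$ polynomial expression in the elements $z_j/f$ (again because $a_k \in J^k$). Hence $1 \in (z_1/f, \ldots, z_m/f) R[z_1/f,\ldots,z_m/f]$, so these generators cut out the unit ideal on $U'_{m+1}$. Consequently $U'_{m+1}$ is covered by the distinguished opens $D(z_j/f)$, and each such open sits inside $U'_j$ via the standard transition $R[z_1/z_j,\ldots,z_m/z_j,f/z_j] = R[z_1/f,\ldots,z_m/f][(z_j/f)^{-1}]$. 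So the charts $\{U'_i\}_{i=1}^m$ already cover $Y'$, and over each $U_i$ the map $\rho' \colon Y' \to Y$ is exactly the adjunction of the integral element $f/z_i$.

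Finally, to produce the factorization $Y' \to Y \to X$ I would invoke the universal property of $Y$ as the blowup of $J$: it suffices to check that $J \cdot \O_{Y'}$ is an invertible ideal sheaf. On $U'_i$ (for $i\leq m$), $J\O_{Y'}(U'_i)$ contains $z_i$ and is contained in $(z_i)$ because $z_j = (z_j/z_i) z_i$; the fact that $(z_i) = J' \O_{Y'}(U'_i)$ is an invertible ideal (this is how $Y'$ is constructed) forces $z_i$ to be a non-zero-divisor there, so $J\O_{Y'}|_{U'_i} = (z_i)$ is invertible. Since the $U'_i$ cover $Y'$, $J\O_{Y'}$ is invertible globally, which yields the required factorization.

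The only point requiring genuine care is the redundancy of $U'_{m+1}$; everything else is essentially a direct transcription of the integral dependence relation into the two naturally occurring denominators $z_i^n$ and $f^n$. Once $U'_{m+1}$ is dispatched, both the factorization through $Y$ and the description of $Y'$ as a partial normalization of $Y$ follow immediately from the affine-chart description.
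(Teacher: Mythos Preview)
Your proof is correct and follows essentially the same approach as the paper: both write the integral equation, show $f/z_i$ is integral over $\O_Y(U_i)$ by dividing by $z_i^n$, and then argue that the extra $f$-chart is redundant. The only cosmetic difference is in that last step: you divide the integral equation by $f^n$ to show $(z_1/f,\ldots,z_m/f)$ is the unit ideal on $U'_{m+1}$, whereas the paper argues directly in $\Proj S'$ that any homogeneous prime containing all the $z_iT$ must also contain $fT$ (an inductive descent using the same relation in degree $n$); the paper then reads off the factorization from the resulting subring inclusions rather than invoking the universal property explicitly.
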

\begin{proof}
Write $f^n + a_1 f^{n-1} + \dots + a_n = 0$ with $a_i \in J^i$.  Now write $J = (z_1, \ldots, z_m )$ and form the Rees algebra $S$ as above.  Let $S' = R \oplus J' T \oplus J'^2 T \oplus \dots \supseteq S$.  We will first prove that the $U_i' = Y' \setminus V(z_i T)$ form an open cover of $Y'$ (in particular, we do not need $V(f T)$).  Suppose that $Q \subseteq S'$ is a homogeneous prime ideal containing all of the $z_i T$ but not $f T$.  Obviously $Q$ contains $0 = f^n T^n + a_1 f^{n-1} T^n + \dots + a_n T^n$ also note that $Q$ contains $a_n T^n$ since $a_n T^n \in ( z_1, \dots, z_n )^n T^n$.  But then since $Q$ does not contain $f T$, $Q$ must contain
\[
f^{n-1} T^{n-1} + a_1 f^{n-2} T^{n-1} + \dots + a_{n-1} T^{n-1}.
\]
But $Q$ also contains $a_{n-1} T^{n-1}$ as before and so continuing in this way, we eventually deduce that $f T \in Q$, a contradiction.  Thus we have shown that $\{U_i\}$ form an open cover of $\Proj S' = Y'$.

On the other hand, each $U_i' = \Spec R[z_1/z_i, \dots, z_m/z_i, f/z_i]$ and $y = f/z_i$ satisfies the monic polynomial equation
\[
(f/z_i)^n + (a_1/z_i) (f/z_i)^{n-1} + \dots + (a_n/z_i^n) = 0
\]
where each $a_j/z_i^j \in R[z_1/z_i, \dots, z_m/z_i]$ by construction.  The lemma follows.
\end{proof}

Next we recall a partial converse to the previous Lemma.

\begin{lemma}
\label{lem.BlowupOfNormalizedIdealPower}
Suppose additionally to \autoref{set.Blowup} that $R$ is normal, and that the normalization $\mu : Y' \to Y$ is finite over $Y$.  Then $\pi : Y' \to X$ is the blowup of $\overline{J^n}$ for some $n > 0$ where $\overline{\bullet}$ denotes the integral closure of the ideal.
\end{lemma}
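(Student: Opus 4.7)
The plan is to identify $Y'$ with the blowup of $\overline{J^n}$ in $X$ for $n$ sufficiently large, by comparing affine charts. Write $J = (z_1, \dots, z_m)$, so that the standard charts $U_i = \Spec(A_i)$ of $Y$ with $A_i = R[z_1/z_i, \dots, z_m/z_i]$ cover $Y$; by finiteness of $\mu$, the preimage of $U_i$ in $Y'$ is $\Spec(A_i')$, where $A_i'$ denotes the integral closure of $A_i$.

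First I would establish the classical identification
\[
A_i' \;=\; \bigcup_{k \geq 0} \overline{J^k} \cdot z_i^{-k} \;\subseteq\; R[z_i^{-1}].
\]
The inclusion $\supseteq$ is straightforward: if $f \in \overline{J^k}$ satisfies $f^\ell + a_1 f^{\ell - 1} + \cdots + a_\ell = 0$ with $a_s \in J^{ks}$, then $f/z_i^k$ satisfies a monic equation over $A_i$ whose coefficients $a_s/z_i^{ks}$ lie in $A_i$. The reverse inclusion uses that $R[z_i^{-1}]$ is normal and contains $A_i$, so every $y \in A_i'$ can be written as $f/z_i^k$ with $f \in R$; expressing all coefficients of an integral equation for $y$ with common denominator $z_i^{ks}$ and clearing denominators yields a monic equation $f^\ell + b_1 f^{\ell - 1} + \cdots + b_\ell = 0$ with $b_s \in J^{ks}$, so $f \in \overline{J^k}$.

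By the finiteness of $\mu$, each $A_i'$ is a finitely generated $A_i$-module, so $A_i' = R[\overline{J^{n_i}} \cdot z_i^{-n_i}]$ for some $n_i \geq 1$. Taking $n := \max_i n_i$ and using the inclusion $\overline{J^{n_i}} \cdot J^{n - n_i} \subseteq \overline{J^n}$ (a standard property of integral closure), one obtains $A_i' = R[\overline{J^n} \cdot z_i^{-n}]$ uniformly in $i$. Let $Z$ denote the blowup of $\overline{J^n}$ in $X$. Since $z_i^n \in J^n \subseteq \overline{J^n}$, among the standard affine charts of $Z$ are $\Spec(R[\overline{J^n} \cdot z_i^{-n}]) = \Spec(A_i')$. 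Moreover, $\overline{J^n} \cdot \O_{Y'}$ is invertible---locally on $\Spec(A_i')$ it equals $z_i^n A_i'$---so the universal property of blowups produces a canonical $X$-morphism $\phi \colon Y' \to Z$ that restricts to the identity on these charts.

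The remaining and most delicate step is verifying that $\phi$ is globally an isomorphism---equivalently, that the charts $\Spec(R[\overline{J^n} \cdot z_i^{-n}])$ cover $Z$. I would argue this by properness: both $Y' \to X$ and $Z \to X$ are isomorphisms above $X \setminus V(J)$, since $\overline{J^n}$ and $J$ share the same radical, so the image of $\phi$ contains this dense open. As $Y'$ is proper over $X$ (being finite over the projective $X$-scheme $Y$) and $Z$ is separated over $X$, the image of $\phi$ is closed, hence all of $Z$. Combined with the fact that $\phi$ is already an isomorphism onto each chart $\Spec(R[\overline{J^n} \cdot z_i^{-n}])$, these charts must cover $Z$ and $\phi$ is an isomorphism, proving the lemma.
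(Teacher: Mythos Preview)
Your proof is correct and the core computation---identifying the integral closure of each affine chart $A_i$ with $R[\overline{J^n}\cdot z_i^{-n}]$---matches the paper's approach exactly. The one genuine difference lies in how you justify that these charts actually cover the blowup $Z$ of $\overline{J^n}$: you invoke properness of $Y'\to X$ to conclude that the image of $\phi\colon Y'\to Z$ is closed, hence all of $Z$, whereas the paper simply asserts the conclusion, implicitly relying on the preceding \autoref{lem.IntegralElementPartialNormalization} (adjoining elements integral over $J$ does not introduce new charts beyond the $U_i$). Your route is self-contained and arguably cleaner, since it avoids appealing to that lemma in the possibly non-finitely-generated setting of passing from $J^n$ to $\overline{J^n}$; the paper's route is shorter but leans more heavily on the surrounding material.
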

\begin{proof}
Write $J = (z_1, \dots, z_m)$ and consider the ring $R_i := R[z_1/z_i, \dots, z_m/z_i]$ defining an affine chart $U_i$ on $Y$.  Suppose that $x \in \O_{Y'}(\mu^{-1} U_i)$, and hence $x$ is integral over $R_i$.  It follows that $x$ satisfies some integral equation
\[
x^l + f_{1} x^{l-1} + \dots + f_{l-1} x^1 + f_l = 0
\]
with $f_j = f_j(z_1/z_i, \dots, z_m/z_i) \in R_i$.  Note that we can pick a sufficiently large $h$ such that $f_jz_i^h\in J^h$ for all $j$ (i.e., clearing all the denominators of $f_j$). It follows that $f_j z_i^{hj} \in J^{hj} \subseteq R$ for all $j$. Multiplying by $z^{hl}$ we get
\[
(xz_i^h)^l + f_{1} z_i^h (xz_i^h)^{l-1} + \dots + f_{l-1} z_i^{h(l-1)} (x z_i^h)^1 + f_l z_i^{hl} = 0.
\]
Now, $x z_i^h$ is in $R$ since it is integral over $R$ and $R$ is normal.  Since $f_j z_i^{hj} \in J^{hj}$ for all $j$, we also have $xz_i^h \in \overline{J^h}$ and thus $x\in R[\frac{\overline{J^h}}{z_i^h}]$. We can do this for the finitely many generators of each chart, and pick $h\gg0$ that works for all these generators. It follows that there exists $h\gg0$ such that $\O_{Y'}(\mu^{-1} U_i)\subseteq R[\frac{\overline{J^h}}{z_i^h}]$ for every $i$. But then $\O_{Y'}(\mu^{-1} U_i)= R[\frac{\overline{J^h}}{z_i^h}]$ because the latter is integral over $R_i$ and $\O_{Y'}(\mu^{-1} U_i)$ is the integral closure of $R_i$. Therefore $Y'$ is the blow up of $\overline{J^h}$ as desired.
\end{proof}

\begin{remark}
Another way to prove this when $R$ is normal, Noetherian and excellent is to consider the Rees algebra $S$, and observe that the normalization $S'$ of $S$ is \[
S' = R \oplus \overline{J} T \oplus \overline{J^2} T^2 \oplus \dots,
\]
see for instance \cite[Proposition 5.2.1]{HunekeSwansonIntegralClosure}.  It easily follows that $\Proj S'$ is the normalization of $\Proj S$ \cite[6.C.9 Exercise]{PatilStorchIntroductiontoAGandCA}.  Since $S$ is excellent, $S'$ is finite over $S$ and hence Noetherian.  We thus see that $S'^{n}$, the $n$th Veronese of $S'$, is generated in degree $1$ for $n$ sufficiently divisible \cite[Chapter III, \S 1.3, Proposition 3]{Bourbaki1998}.  But $\Proj S'^{n} \cong \Proj S'$ is the blowup of $\overline{J^n}$.
\end{remark}




Finally, we now move to blowups in Noetherian regular local rings.  First we recall some notation, suppose that $\pi : Y \to X = \Spec A$ is a finite type birational map between normal Noetherian integral schemes where $X$ is regular (or at least Gorenstein).  We also fix a choice of a dualizing complex $\omega_A^{\mydot}$ on $A$.  Since $A$ is Gorenstein and integral, this complex has cohomology only in a single degree (which we select to be $-\dim X$), and that cohomology is a line bundle which is denoted by $\omega_X$.  We then define the dualizing complex $\omega_Y^{\mydot}$ on $Y$ to be $\pi^! \omega_X^{\mydot}$ where we have sheafified our dualizing complex on $A$.  We also set $\omega_Y := \myH^{-\dim X} \omega_Y^{\mydot}$ and observe that this is not necessarily a line bundle.

By a \emph{canonical divisor} on $X$ we mean any Weil divisor $K_X$ on $X$ such that $\O_X(K_X) \cong \omega_X$.  Since $X$ is Gorenstein, $\O_X(K_X)$ is a line bundle and hence $K_X$ is Cartier.  Likewise a \emph{canonical divisor} on $Y$ is any Weil divisor $K_Y$ so that $\O_Y(K_Y) \cong \omega_Y$.

\begin{lemma}
\label{lem.ChoiceOfCanonical}
There exist canonical divisors $K_Y$ and $K_X$ that agree where $\pi$ is an isomorphism.  Furthermore, for any choice of $K_X$, there is such a compatible choice of $K_Y$.
\end{lemma}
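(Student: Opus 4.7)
The plan is to reduce everything to the observation that on a normal Noetherian integral scheme, any two Weil divisors with isomorphic associated reflexive sheaves differ by a principal divisor, combined with the fact that $\pi^!$ restricts to the identity functor wherever $\pi$ is an isomorphism.

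First I would fix any canonical divisor $K_X$ on $X$ (which exists because $X$ is Gorenstein, so $\omega_X$ is a line bundle). Let $U \subseteq X$ denote the open locus over which $\pi$ is an isomorphism, and set $V = \pi^{-1}(U)$, so that $\pi|_V : V \xrightarrow{\sim} U$. Because $\omega_Y^{\mydot} = \pi^! \omega_X^{\mydot}$ and $\pi^!$ agrees with the identity on the open subset where $\pi$ is an isomorphism, there is a canonical identification $\omega_Y|_V \cong \omega_X|_U$. In particular, pulling back $K_X|_U$ along $\pi|_V$ gives a Weil divisor on $V$ whose associated reflexive sheaf is $\omega_Y|_V$.

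Next I would produce some canonical divisor $K_Y'$ on $Y$ by using that $Y$ is normal Noetherian and $\omega_Y$ is a rank one reflexive $\O_Y$-module (indeed $Y$ is Gorenstein in codimension one since it is normal, so $\omega_Y$ is invertible in codimension one and hence corresponds to a Weil divisor class). Both $K_Y'|_V$ and (the divisor corresponding to) $K_X|_U$ are Weil divisors on $V$ whose reflexive hulls are isomorphic to $\omega_Y|_V$. Therefore their difference is a principal divisor $\divisor_V(g)$ for some $g$ in the function field $K(V) = K(Y) = K(X)$ (using that $\pi$ is birational).

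Finally I would set $K_Y := K_Y' - \divisor_Y(g)$. This is linearly equivalent to $K_Y'$, hence is still a canonical divisor on $Y$, and by construction its restriction to $V$ equals $K_X|_U$ under the identification $\pi|_V : V \xrightarrow{\sim} U$. This proves the second sentence (compatibility for any given $K_X$), and the first sentence is an immediate consequence by choosing any $K_X$ to begin with. The main (mild) obstacle is making sure that the identification $\omega_Y|_V \cong \omega_X|_U$ really is canonical — but this follows because $\pi|_V$ is an open immersion composed with an isomorphism, for which the upper-shriek functor is naturally isomorphic to the usual pullback/restriction.
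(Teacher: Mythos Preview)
Your proof is correct and relies on the same underlying facts as the paper (that $\omega_Y$ is reflexive of rank one, that $\omega_Y|_V \cong \omega_X|_U$ via $\pi^!$, and that Weil divisors with isomorphic reflexive sheaves are linearly equivalent), but the organization is inverted. The paper first picks an arbitrary $K_Y$, pushes it forward to obtain $K_X := \pi_* K_Y$, and uses that the complement of $U$ in $X$ has codimension $\geq 2$ to see that $\O_X(\pi_* K_Y) \cong \omega_X$; the second sentence is then handled by adjusting both $K_X$ and $K_Y$ by the same principal divisor. You instead fix $K_X$ first and correct an arbitrary $K_Y'$ by a principal divisor coming from the open set $V$. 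Your route is slightly more direct for the ``furthermore'' clause, while the paper's route makes the first sentence more transparent; neither buys anything substantive over the other.
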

Our proof also holds if $X$ is not necessarily Gorenstein but only normal with a dualizing complex.
\begin{proof}
First notice that even though $\omega_Y$ is not a line bundle, $\omega_Y$ is still a reflexive rank-1 sheaf, and so there exists a $K_Y$ with $\O_Y(K_Y) = \omega_Y$.  Consider the divisor $\pi_* K_Y$ on $X$ obtained by throwing away any irreducible component of $K_Y$ that is mapped to a subscheme of codimension $\geq 2$.  This divisor agrees with $K_Y$ wherever $\pi$ is an isomorphism, which is a set $U$ whose complement has codimension $\geq 2$ on $X$.  In particular, $\O_U(\pi_* K_Y) \cong \omega_X|_U$.  Thus $\O_X(\pi_* K_Y)$ is a reflexive sheaf that agrees with $\omega_X$ outside a set of codimension $\geq 2$, and so $\O_X(\pi_* K_Y) \cong \omega_X$, \cf \cite{HartshorneGeneralizedDivisorsOnGorensteinSchemes}.  Setting $K_X = \pi_* K_Y$ proves the first part of the lemma.

Now suppose that $K_X'$ is another choice of canonical divisor.  Since $\O_X(K_X') \cong \omega_X \cong \O_X(K_X)$, we see that $K_X' \sim K_X$ and so there exists some element $f$ of the fraction field $K(A)$ so that $K_X' = K_X + \Div_X(f)$.  We then set $K_Y' = K_Y + \Div_Y(f)$ and observe that $K_Y'$ and $K_X'$ agree where $\pi$ is an isomorphism.
\end{proof}

\begin{definition}[Relative canonical divisor]
\label{def.RelCanonical}
Choose $K_Y$ and $K_X$ as in \autoref{lem.ChoiceOfCanonical}.  We define the \emph{relative canonical divisor} $K_{Y/X} := K_Y - \pi^* K_X$, and observe it is exceptional and also independent of the choice of $K_Y$ and $K_X$.  Note that if one chooses $\omega_X \cong \O_X$, then one may take $K_X = 0$ and so $K_Y = K_{Y/X}$ may be chosen to be exceptional.
\end{definition}

\begin{lemma}
\label{lem.RelativeCanonicalOfBlowup}
Suppose that $(R, \fram, k)$ is a regular local Noetherian ring of dimension $d$ and that $Y \to X = \Spec R$ is the blowup of $\fram$.  Then $Y$ is regular, has prime exceptional divisor $E$ with $\fram \O_Y  =\O_Y(-E)$ and $K_{Y/X} = (d-1)E$.
\end{lemma}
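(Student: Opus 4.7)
The plan is to work on the standard affine charts of the blowup, establish regularity and the structure of $E$, and then compute $K_{Y/X}$ via adjunction on $E \cong \bP^{d-1}_k$.

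Fix a regular system of parameters $x_1, \ldots, x_d$ generating $\fram$, so that $Y = \Proj( R \oplus \fram T \oplus \fram^2 T^2 \oplus \cdots)$ is covered by affine opens $U_i = \Spec R_i$ with $R_i = R[x_1/x_i, \ldots, x_d/x_i]$. A standard calculation presents $R_i$ as $R[T_j : j \neq i]/(x_j - x_i T_j : j \neq i)$, the quotient of a polynomial ring over $R$ by a regular sequence of length $d-1$. A maximal ideal of $R_i$ not containing $x_i$ localizes inside the regular ring $R[1/x_i]$, hence is regular. A maximal ideal containing $x_i$ must have the form $(x_i, T_j - a_j : j \neq i)$ with $a_j \in k = R/\fram$, so is generated by $d$ elements in a $d$-dimensional ring and is therefore regular. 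Since $x_j = x_i T_j$ for $j \neq i$ in $R_i$, we have $\fram R_i = (x_i)$, and $R_i/(x_i) \cong k[T_j : j \neq i]$ is a domain, so $x_i$ is a prime element. This shows $E \cap U_i = V(x_i)$ is an irreducible divisor on $U_i$; gluing, $\fram \O_Y = \O_Y(-E)$ with $E \cong \Proj k[x_1, \ldots, x_d] \cong \bP^{d-1}_k$.

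Finally, I would compute $K_{Y/X}$ on $E$. Since $R$ is regular I take $K_X = 0$, so $K_{Y/X} = K_Y$ and, by the discussion preceding \autoref{def.RelCanonical}, can be chosen supported on $E$; writing $K_Y = aE$, only $a$ needs determining. The local generator $x_i$ of $\O_Y(-E)$ on $U_i$ transitions to $x_j$ on $U_j$ via multiplication by $x_i/x_j$, which restricted to $E$ is the gluing cocycle for $\O_{\bP^{d-1}}(1)$; hence $\O_Y(-E)|_E \cong \O_{\bP^{d-1}}(1)$, i.e., $E|_E = -H$ where $H$ is the hyperplane class. Adjunction then gives
\[
-d H = K_E = (K_Y + E)|_E = (a+1)\cdot E|_E = -(a+1) H,
\]
so $a = d-1$. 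The main delicacy is correctly identifying the sign in $\O_Y(-E)|_E \cong \O_{\bP^{d-1}}(1)$ via the transition functions; all other steps are either routine local checks or direct applications of adjunction and the computation $K_{\bP^{d-1}} = -dH$.
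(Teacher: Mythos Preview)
Your proof is correct and follows essentially the same approach as the paper: both establish $E \cong \bP^{d-1}_k$ with $\O_Y(-E)|_E \cong \O_E(1)$ and then use adjunction $(K_Y+E)|_E = K_E$ together with $K_{\bP^{d-1}_k} = -dH$ to solve for the coefficient of $E$. You simply spell out the ``direct computation'' that the paper leaves implicit. One small slip: in your regularity check, the maximal ideals of $R_i$ containing $x_i$ need not be of the form $(x_i, T_j - a_j)$ when $k$ is not algebraically closed; however, since $R_i/(x_i) \cong k[T_j : j \neq i]$ is a regular ring of dimension $d-1$, any such maximal ideal is still generated by $d$ elements, so your conclusion stands.
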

\begin{proof}
This is well known, but because we do not know of a reference where it is phrased in this language outside of the context of varieties over a field, we include a quick geometric proof.  Equivalent commutative algebra statements can be found for example in \cite{HerzogVasconcelosOnTheDivisorClassGroupOfRees,HerzogSimisVasconcelosCanModuleReesAlgebra,TomariWatanabeFilteredRings}.

A direct computation shows that the exceptional divisor $E \cong \bP^{d-1}_k$ lives in the regular scheme $Y$.  The same computation also shows that $\O_X(-E)|_E = \O_E(1)$.  Because we know\footnote{The adjunction formula still works in this generality, simply take the Grothendieck dual of the short exact sequence $0 \to \O_Y(-E) \to \O_Y \to \O_E \to 0$ which yields $0 \to \omega_Y \to \omega_Y(E) \to \omega_E \to 0$.} that $(K_Y + E)|_E = K_E$ and that $\O_E(K_E) = \O_E(-d)$, if we write $K_Y = nE$, then $(K_Y + E)|_E = (nE + E)|_E = K_E$ and so $-(n+1) = -d$ and thus $n = d-1$ as claimed.
\end{proof}

\bibliographystyle{skalpha}
\bibliography{MainBib}

\newcommand{\etalchar}[1]{$^{#1}$}
\def\cfudot#1{\ifmmode\setbox7\hbox{$\accent"5E#1$}\else
  \setbox7\hbox{\accent"5E#1}\penalty 10000\relax\fi\raise 1\ht7
  \hbox{\raise.1ex\hbox to 1\wd7{\hss.\hss}}\penalty 10000 \hskip-1\wd7\penalty
  10000\box7}
\providecommand{\bysame}{\leavevmode\hbox to3em{\hrulefill}\thinspace}
\providecommand{\MR}{\relax\ifhmode\unskip\space\fi MR}
\providecommand{\MRhref}[2]{%
  \href{http://www.ams.org/mathscinet-getitem?mr=#1}{#2}
}
\providecommand{\href}[2]{#2}
\begin{thebibliography}{DDG{\etalchar{+}}18}

\bibitem[Abh56]{AbhyankarValuationsCentered}
{\sc S.~Abhyankar}: \emph{On the valuations centered in a local domain}, Amer.
  J. Math. \textbf{78} (1956), 321--348. {\sf\scriptsize 0082477}

\bibitem[And16a]{AndreDirectsummandconjecture}
{\sc Y.~Andr\'{e}}: \emph{La conjecture du facteur direct}, arXiv:1609.00345,
  to appear in Publ. Math. Inst. Hautes \'{E}tudes Sci.

\bibitem[And16b]{AndrePerfectoidAbhyankarLemma}
{\sc Y.~Andr\'{e}}: \emph{Le lemme d'{A}bhyankar perfectoide},
  arXiv:1609.00320, to appear in Publ. Math. Inst. Hautes \'{E}tudes Sci.

\bibitem[And18]{AndreWeaklyFunctorialBigCM}
{\sc Y.~Andr\'{e}}: \emph{Weak functoriality of {C}ohen-{M}acaulay algebras},
  arXiv:1801.10010.

\bibitem[Art86]{ArtinNeronModels}
{\sc M.~Artin}: \emph{N\'eron models}, Arithmetic geometry ({S}torrs, {C}onn.,
  1984), Springer, New York, 1986, pp.~213--230. {\sf\scriptsize 861977}

\bibitem[BDH{\etalchar{+}}09]{BDRHKKSS}
{\sc T.~Bauer, S.~{Di Rocco}, B.~Harbourne, M.~Kapustka, A.~Knutsen, W.~Syzdek,
  and T.~Szemberg}: \emph{A primer on {S}eshadri constants}, Interactions of
  classical and numerical algebraic geometry, Contemp. Math., vol. 496, Amer.
  Math. Soc., Providence, RI, 2009, pp.~33--70. {\sf\scriptsize 2555949}

\bibitem[Bha17]{BhattLectureNotesPerfectoidSpaces}
{\sc B.~Bhatt}: \emph{Lecture notes on perfectoid spaces}, available at \\
  http://www-personal.umich.edu/~bhattb/teaching/mat679w17/lectures.pdf.

\bibitem[Bha18]{BhattDirectsummandandDerivedvariant}
{\sc B.~Bhatt}: \emph{On the direct summand conjecture and its derived
  variant}, Invent. Math. \textbf{212} (2018), no.~2, 297--317. {\sf\scriptsize
  3787829}

\bibitem[BMS16]{BhattMorrowScholzeIntegralPadicHodge}
{\sc B.~Bhatt, M.~Morrow, and P.~Scholze}: \emph{Integral $p$-adic {H}odge
  theory}, arXiv:1602.03148.

\bibitem[BH10]{BocciHarbourneSymbolicPowers}
{\sc C.~Bocci and B.~Harbourne}: \emph{Comparing powers and symbolic powers of
  ideals}, J. Algebraic Geom. \textbf{19} (2010), no.~3, 399--417.
  {\sf\scriptsize 2629595}

\bibitem[Bou98]{Bourbaki1998}
{\sc N.~Bourbaki}: \emph{Commutative algebra. {C}hapters 1--7}, Elements of
  Mathematics (Berlin), Springer-Verlag, Berlin, 1998, Translated from the
  French, Reprint of the 1989 English translation. {\sf\scriptsize MR1727221
  (2001g:13001)}

\bibitem[BH93]{BrunsHerzog}
{\sc W.~Bruns and J.~Herzog}: \emph{Cohen-{M}acaulay rings}, Cambridge Studies
  in Advanced Mathematics, vol.~39, Cambridge University Press, Cambridge,
  1993. {\sf\scriptsize MR1251956 (95h:13020)}

\bibitem[DDG{\etalchar{+}}18]{DaoDeStefaniGrifoHunekeNunezSymbolicSurvey}
{\sc H.~Dao, A.~{De Stefani}, E.~Grifo, C.~Huneke, and
  L.~N{\'u}{\~{n}}ez-Betancourt}: \emph{Symbolic powers of ideals},
  Singularities and Foliations. Geometry, Topology and Applications (Cham)
  (R.~N. Ara{\'u}jo~dos Santos, A.~Menegon~Neto, D.~Mond, M.~J. Saia, and
  J.~Snoussi, eds.), Springer International Publishing, 2018, pp.~387--432.

\bibitem[DEL00]{DemaillyEinLazSubadditivity}
{\sc J.-P. Demailly, L.~Ein, and R.~Lazarsfeld}: \emph{A subadditivity property
  of multiplier ideals}, Michigan Math. J. \textbf{48} (2000), 137--156,
  Dedicated to William Fulton on the occasion of his 60th birthday.
  {\sf\scriptsize 1786484 (2002a:14016)}

\bibitem[ELS01]{EinLazSmithSymbolic}
{\sc L.~Ein, R.~Lazarsfeld, and K.~E. Smith}: \emph{Uniform bounds and symbolic
  powers on smooth varieties}, Invent. Math. \textbf{144} (2001), no.~2,
  241--252. {\sf\scriptsize MR1826369 (2002b:13001)}

\bibitem[EV92]{EsnaultViehwegLecturesOnVanishing}
{\sc H.~Esnault and E.~Viehweg}: \emph{Lectures on vanishing theorems}, DMV
  Seminar, vol.~20, Birkh\"auser Verlag, Basel, 1992. {\sf\scriptsize MR1193913
  (94a:14017)}

\bibitem[GR03]{GabberRameroAlmostringtheory}
{\sc O.~Gabber and L.~Ramero}: \emph{Almost ring theory}, Lecture Notes in
  Mathematics, vol. 1800, Springer-Verlag, Berlin, 2003.

\bibitem[GR04]{GabberRameroFoundationsAlmostRingTheory}
{\sc O.~Gabber and L.~Ramero}: \emph{Foundations for almost ring theory},
  arXiv:0409584.

\bibitem[GD65]{EGA_IV_II}
{\sc A.~Grothendieck and {Dieudonn\'e, J.}}: \emph{\'{E}l\'ements de
  g\'eom\'etrie alg\'ebrique. {IV}. \'{E}tude locale des sch\'emas et des
  morphismes de sch\'emas. {II}}, Inst. Hautes \'Etudes Sci. Publ. Math.
  (1965), no.~24, 231. {\sf\scriptsize 0199181 (33 \#7330)}

\bibitem[Har05]{HaraACharacteristicPAnalogOfMultiplierIdealsAndApplications}
{\sc N.~Hara}: \emph{A characteristic {$p$} analog of multiplier ideals and
  applications}, Comm. Algebra \textbf{33} (2005), no.~10, 3375--3388.
  {\sf\scriptsize MR2175438 (2006f:13006)}

\bibitem[HY03]{HaraYoshidaGeneralizationOfTightClosure}
{\sc N.~Hara and K.-I. Yoshida}: \emph{A generalization of tight closure and
  multiplier ideals}, Trans. Amer. Math. Soc. \textbf{355} (2003), no.~8,
  3143--3174 (electronic). {\sf\scriptsize MR1974679 (2004i:13003)}

\bibitem[Har66]{HartshorneResidues}
{\sc R.~Hartshorne}: \emph{Residues and duality}, Lecture notes of a seminar on
  the work of A. Grothendieck, given at Harvard 1963/64. With an appendix by P.
  Deligne. Lecture Notes in Mathematics, No. 20, Springer-Verlag, Berlin, 1966.
  {\sf\scriptsize MR0222093 (36 \#5145)}

\bibitem[Har77]{Hartshorne}
{\sc R.~Hartshorne}: \emph{Algebraic geometry}, Springer-Verlag, New York,
  1977, Graduate Texts in Mathematics, No. 52. {\sf\scriptsize MR0463157 (57
  \#3116)}

\bibitem[Har94]{HartshorneGeneralizedDivisorsOnGorensteinSchemes}
{\sc R.~Hartshorne}: \emph{Generalized divisors on {G}orenstein schemes},
  Proceedings of Conference on Algebraic Geometry and Ring Theory in honor of
  Michael Artin, Part III (Antwerp, 1992), vol.~8, 1994, pp.~287--339.
  {\sf\scriptsize MR1291023 (95k:14008)}

\bibitem[HM17]{HeitmannMaBigCohenMacaulayAlgebraVanishingofTor}
{\sc R.~Heitmann and L.~Ma}: \emph{Big {C}ohen-{M}acaulay algebras and the
  vanishing conjecture for maps of {T}or in mixed characteristic},
  arXiv:1703.08281, to appear in Algebra Number Theory.

\bibitem[HSV87]{HerzogSimisVasconcelosCanModuleReesAlgebra}
{\sc J.~Herzog, A.~Simis, and W.~V. Vasconcelos}: \emph{On the canonical module
  of the {R}ees algebra and the associated graded ring of an ideal}, J. Algebra
  \textbf{105} (1987), no.~2, 285--302. {\sf\scriptsize 873664}

\bibitem[HV85]{HerzogVasconcelosOnTheDivisorClassGroupOfRees}
{\sc J.~Herzog and W.~V. Vasconcelos}: \emph{On the divisor class group of
  {R}ees-algebras}, J. Algebra \textbf{93} (1985), no.~1, 182--188.
  {\sf\scriptsize 780491}

\bibitem[HH90]{HochsterHunekeTC1}
{\sc M.~Hochster and C.~Huneke}: \emph{Tight closure, invariant theory, and the
  {B}rian\c con-{S}koda theorem}, J. Amer. Math. Soc. \textbf{3} (1990), no.~1,
  31--116. {\sf\scriptsize MR1017784 (91g:13010)}

\bibitem[HH02]{HochsterHunekeComparisonOfSymbolic}
{\sc M.~Hochster and C.~Huneke}: \emph{Comparison of symbolic and ordinary
  powers of ideals}, Invent. Math. \textbf{147} (2002), no.~2, 349--369.
  {\sf\scriptsize 1881923 (2002m:13002)}

\bibitem[HH07]{HochsterHunekeFineBehaviorOfSymbolicPowers}
{\sc M.~Hochster and C.~Huneke}: \emph{Fine behavior of symbolic powers of
  ideals}, Illinois J. Math. \textbf{51} (2007), no.~1, 171--183 (electronic).
  {\sf\scriptsize 2346193 (2008i:13003)}

\bibitem[HKV09]{HunekeKatzValidashtiUniformEquivalenceSymbolicAdic}
{\sc C.~Huneke, D.~Katz, and J.~Validashti}: \emph{Uniform equivalence of
  symbolic and adic topologies}, Illinois J. Math. \textbf{53} (2009), no.~1,
  325--338. {\sf\scriptsize 2584949}

\bibitem[HS06]{HunekeSwansonIntegralClosure}
{\sc C.~Huneke and I.~Swanson}: \emph{Integral closure of ideals, rings, and
  modules}, London Mathematical Society Lecture Note Series, vol. 336,
  Cambridge University Press, Cambridge, 2006. {\sf\scriptsize MR2266432
  (2008m:13013)}

\bibitem[Kaw82]{KawamataVanishing}
{\sc Y.~Kawamata}: \emph{A generalization of {K}odaira-{R}amanujam's vanishing
  theorem}, Math. Ann. \textbf{261} (1982), no.~1, 43--46. {\sf\scriptsize
  MR675204 (84i:14022)}

\bibitem[Laz04]{LazarsfeldPositivity2}
{\sc R.~Lazarsfeld}: \emph{Positivity in algebraic geometry. {II}}, Ergebnisse
  der Mathematik und ihrer Grenzgebiete. 3. Folge. A Series of Modern Surveys
  in Mathematics [Results in Mathematics and Related Areas. 3rd Series. A
  Series of Modern Surveys in Mathematics], vol.~49, Springer-Verlag, Berlin,
  2004, Positivity for vector bundles, and multiplier ideals. {\sf\scriptsize
  MR2095472 (2005k:14001b)}

\bibitem[LT81]{LipmanTeissierPseudorationallocalringsandatheoremofBrianconSkoda}
{\sc J.~Lipman and B.~Teissier}: \emph{Pseudorational local rings and a theorem
  of {B}rian\c{c}on-{S}koda about integral closures of ideals}, Michigan Math.
  J. \textbf{28} (1981), no.~1, 97--116.

\bibitem[Mat89]{MatsumuraCommutativeRingTheory}
{\sc H.~Matsumura}: \emph{Commutative ring theory}, second ed., Cambridge
  Studies in Advanced Mathematics, vol.~8, Cambridge University Press,
  Cambridge, 1989, Translated from the Japanese by M. Reid. {\sf\scriptsize
  MR1011461 (90i:13001)}

\bibitem[PS10]{PatilStorchIntroductiontoAGandCA}
{\sc D.~P. Patil and U.~Storch}: \emph{Introduction to algebraic geometry and
  commutative algebra}, IISc Lecture Notes Series, vol.~1, IISc Press,
  Bangalore; World Scientific Publishing Co. Pte. Ltd., Hackensack, NJ, 2010.
  {\sf\scriptsize 2648005}

\bibitem[Sch12]{ScholzePerfectoidspaces}
{\sc P.~Scholze}: \emph{Perfectoid spaces}, Publ. Math. Inst. Hautes \'Etudes
  Sci. \textbf{116} (2012), 245--313. {\sf\scriptsize 3090258}

\bibitem[Shi17]{ShimomotoIntegralperfectoidbigCMviaAndre}
{\sc K.~Shimomoto}: \emph{Integral perfectoid big {C}ohen-{M}acaulay algebras
  via {A}ndr\'{e}'s theorem}, arXiv:1706.06946.

\bibitem[{Sta}18]{stacks-project}
{\sc T.~{Stacks Project Authors}}: \emph{{\itshape Stacks Project}}, 2018.

\bibitem[Swa00]{SwansonLinearEquivalenceOfIdealTopologies}
{\sc I.~Swanson}: \emph{Linear equivalence of ideal topologies}, Math. Z.
  \textbf{234} (2000), no.~4, 755--775. {\sf\scriptsize 1778408}

\bibitem[Tak06]{TakagiFormulasForMultiplierIdeals}
{\sc S.~Takagi}: \emph{Formulas for multiplier ideals on singular varieties},
  Amer. J. Math. \textbf{128} (2006), no.~6, 1345--1362. {\sf\scriptsize
  MR2275023 (2007i:14006)}

\bibitem[Tem08]{TemkinDesingularizationOfQuasiExcellentCharZero}
{\sc M.~Temkin}: \emph{Desingularization of quasi-excellent schemes in
  characteristic zero}, Adv. Math. \textbf{219} (2008), no.~2, 488--522.
  {\sf\scriptsize 2435647}

\bibitem[TW89]{TomariWatanabeFilteredRings}
{\sc M.~Tomari and K.~Watanabe}: \emph{Filtered rings, filtered blowing-ups and
  normal two-dimensional singularities with ``star-shaped'' resolution}, Publ.
  Res. Inst. Math. Sci. \textbf{25} (1989), no.~5, 681--740. {\sf\scriptsize
  1031224}

\bibitem[Vie82]{ViehwegVanishingTheorems}
{\sc E.~Viehweg}: \emph{Vanishing theorems}, J. Reine Angew. Math. \textbf{335}
  (1982), 1--8. {\sf\scriptsize MR667459 (83m:14011)}

\bibitem[Wei94]{WeibelHomological}
{\sc C.~A. Weibel}: \emph{An introduction to homological algebra}, Cambridge
  Studies in Advanced Mathematics, vol.~38, Cambridge University Press,
  Cambridge, 1994. {\sf\scriptsize MR1269324 (95f:18001)}

\end{thebibliography}
\end{document}